\documentclass{article}

\usepackage{arxiv}

\usepackage[utf8]{inputenc} 
\usepackage[T1]{fontenc}    
\usepackage{hyperref}       
\usepackage{url}            
\usepackage{booktabs}       
\usepackage{amsfonts}       
\usepackage{nicefrac}       
\usepackage{microtype}      
\usepackage{lipsum}		
\usepackage{graphicx}
\usepackage{natbib}
\usepackage{doi}
\usepackage{amsmath, amsthm, amssymb, algorithm, algpseudocode}
\usepackage[table]{xcolor}
\usepackage{tikz}
\newcommand{\seq}[1]{\left<#1\right>}
\newcommand{\norm}[1]{\left\Vert#1\right\Vert}
\newcommand{\abs}[1]{\left\vert#1\right\vert}
\newcommand{\set}[1]{\left\{#1\right\}}
\newcommand{\bra}[1]{\left(#1\right)}
\newcommand{\sbra}[1]{\left[#1\right]}

\newcommand{\N}{\mathbb N}

\newcommand{\h}{\mathcal{H}}
\newcommand{\hh}{\mathcal{H}\oplus\mathcal{H}}
\newcommand{\bh}{\mathcal{B}(\mathcal{H})}
\newcommand{\bhh}{\mathcal{L}(\mathcal{H}\oplus\mathcal{H})}

\newcommand{\x}{\mathbf{x}}
\newcommand{\Y}{\mathbb{Y}}
\renewcommand{\c}{\mathbb{C}}
\newcommand{\T}{\mathbb{T}}
\newcommand{\A}{\mathbb{A}}
\newcommand{\B}{\mathbb{B}}
\renewcommand{\S}{\mathbb{S}}
\renewcommand{\L}{\mathcal{B}}
\textheight 21truecm \textwidth 13truecm
\setlength{\oddsidemargin}{0.35in}\setlength{\evensidemargin}{0.35in}

\setlength{\topmargin}{-.6cm}

\newtheorem{theorem}{Theorem}[section]
\newtheorem{lemma}[theorem]{Lemma}

\newtheorem{corollary}[theorem]{Corollary}

\newtheorem{example}[theorem]{Example}

\newtheorem{remark}[theorem]{Remark}


\newcommand\mystyle{\everymath{\displaystyle}}
\mystyle
\title{Improved Upper Bounds for Numerical Radius Inequalities of Operator Matrices and Their Applications}


\author{\href{https://orcid.org/0000-0002-3816-5287}{\includegraphics[scale=0.06]{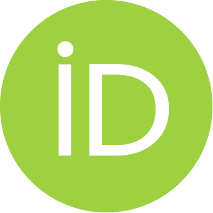}\hspace{1mm}M.H.M.~Rashid}\thanks{Corresponding Author} \\
	Department of Mathematics\&Statistics\\Faculty of Science P.O.Box(7)\\
	Mutah University University\\
	Mutah-Jordan \\
	\texttt{mrash@mutah.edu.jo}
}


\hypersetup{
pdftitle={Improved Upper Bounds for Numerical Radius Inequalities of Operator Matrices and Their Applications},
pdfsubject={q-bio.NC, q-bio.QM},
pdfauthor={M.H.M.Rashid},
pdfkeywords={Numerical radius; Operator norm Inequality; Operator matrices},
}

\begin{document}
\maketitle

\begin{abstract}
	 This study presents new upper bounds for the numerical radii of operator matrices, with a focus on $n \times n$ and $2 \times 2$ block matrices acting on Hilbert space direct sums. By employing techniques such as the H\"older-McCarthy inequality, Jensen's inequality, Bohr's inequality, and extensions of the Buzano inequality, we derive improved estimates that refine classical results by Kittaneh, El-Haddad, Dragomir, Buzano, and others. Our main contributions include bounds for general operator matrices, specific results for off-diagonal matrices, and inequalities for sums and products of operators. Through detailed comparisons and special cases, we demonstrate that our results enhance existing numerical radius inequalities. A key feature of our work is the use of parameter-dependent refinements with constants derived from extended Buzano-type inequalities. The applications of these results span quantum mechanics, integro-differential equations, and fractional calculus, providing sharper tools for stability analysis and numerical approximation in mathematical physics.
\end{abstract}

\keywords{Numerical radius\and Operator norm Inequality\and Operator matrices}

\section{Introduction}
Let $\bh$ be the $C^*$-algebra of all bounded linear operators
on the complex Hilbert space $\h$.  An operator $A\in\bh$ is said to be {\it positive} if $\seq{Ax,x}\geq 0$ holds for all $x\in\h$
and we write $A\geq 0$.
For $A\in\bh$, the numerical radius $\omega(\cdot)$ and the
usual operator norm $\norm{\cdot}$ are, respectively, defined by
\begin{equation*}
  \omega(A)=\sup_{\norm{x}=1}\abs{\seq{Ax,x}}\,\,\mbox{and}\,\, \norm{A}=\sup_{\norm{x}=1}\norm{Ax}.
\end{equation*}
It is clear that $\omega(\cdot)$ defines a norm on $\bh$. Moreover, it is known that $\omega(\cdot)$ is
equivalent to the usual operator norm $\norm{\cdot}$ on $\bh$ and with the following two
sided inequality
\begin{equation}\label{Ineq1.1}
  \frac{1}{2}\norm{A}\leq \omega(A)\leq \norm{A}\,\,\mbox{for all $A\in\bh$}.
\end{equation}
Consequently, if $A^2=0$, then
\begin{equation}\label{IneqB}
  \omega(A)=\frac{1}{2}\norm{A}.
\end{equation}
An important property for the numerical radius is the power inequality, which
says that
\begin{equation*}
  \omega(A^n)\leq \omega^n(A) \,\,\mbox{for all $n\in\N$ and $A\in\bh$}.
\end{equation*}
For basic properties of the numerical radius, we refer to \cite{Halmos}.\\
\indent In \cite{Kittaneh}, Kittaneh provided refinements of the bounds in (\ref{Ineq1.1}) by showing that
\begin{equation}\label{Ineq1.2}
  \frac{1}{4}\norm{|A|^2+|A^*|^2}\leq \omega^2(A)\leq \frac{1}{2}\norm{|A|^2+|A^*|^2} \,\,\mbox{for all $A\in\bh$},
\end{equation}
{where $|A|=\sqrt{A^*A}$ is the absolute value of $A$.}
In \cite{Had-Kit}, El-Haddad and Kittaneh provided a generalization for the second inequality
in (\ref{Ineq1.2}) by showing that
\begin{equation}\label{Ineq1.3}
  \omega^{2r}(A)\leq \frac{1}{2}\norm{|A|^{2r}+|A^*|^{2r}} \,\,\mbox{for all $r\geq 1$ and $A\in\bh$}.
\end{equation}
In \cite{Drag4}, Dragomir presented an important upper bound for the numerical radii of
products of two operators by showing that if $T, S\in B(H)$ and $r \geq 1$, then
\begin{equation}\label{Ineq1.4}
  \omega^{r}(S^*T)\leq \frac{1}{2}\norm{|T|^{2r}+|S|^{2r}}.
\end{equation}
Let $\h_1,\h_2,\cdots,\h_n$ be Hilbert spaces. If $\h=\bigoplus_{j=1}^{n}\h_j$ and $S\in\bh$, then the operator
$S$ can be represented as an $n\times n$ operator matrix, i.e., $S=[S_{ij} ]_{n\times n}$ with $S_{ij }\in \L(\h_j,\h_i)$,
the space of all bounded linear operators from $\h_j$ to $\h_i$. Operator matrices provide a useful
tool for studying Hilbert space operators, which have been extensively studied in the literature.
The block-norm matrix $\hat{S}$ associated with an operator matrix $S=[S_{ij} ]_{n\times n}$  is defined by
{$\hat{S}=\sbra{\norm{S_{ij}}}_{n\times n}$} which is an $n\times n$ non-negative matrix. Hou et al. \cite{Hou} established some estimate
for the numerical radii, operator norm and spectral radii of an $n\times n$ operator matrix $S=\sbra{S_{ij}}$.
In particular, they showed that if $S=[S_{ij} ]_{n\times n}$ is an operator matrix and $\hat{S}=\sbra{\norm{S_{ij}}}_{n\times n}$  is
its block-norm matrix, then
\begin{equation}\label{Ineq1.5}
  \omega(S)\leq \omega(\hat{S}), \quad \norm{S}\leq \norm{\hat{S}},\quad\mbox{and}\,\, r(S)\leq r(\hat{S}),
\end{equation}
where $r(S)$  denotes the spectral radius of $S$.  Bani-Domi and Kittaneh \cite{Dom-kit1} presented a new
numerical radius inequality, which is an improvement of the inequality (\ref{Ineq1.5}). They proved that
if $S=[S_{ij}]$ {is} an operator matrix in $\L\bra{\bigoplus_{j=1}^{n}\h_j}$, then
\begin{equation}\label{Ineq1.6}
  \omega(S)\leq \omega\bra{[s_{ij}]},\,\,\mbox{where}\, s_{ij}=\left\{
                \begin{array}{ll}
                  \frac{1}{2}\bra{\norm{S_{ii}}+\norm{S_{ii}^{2}}^{1/2}}, & \hbox{if $i=j$;} \\
                  \norm{S_{ij}}, & \hbox{if $i\neq j$.}
                \end{array}
              \right.
\end{equation}
Further, Abu-Omar and Kittaneh \cite{AF} improved the inequality (\ref{Ineq1.6}) which states that if $T =[T_{ij }]$
{is} an $n\times n$ operator matrix with $T_{ij }\in \L(\h_j,\h_i)$, then
\begin{equation}\label{Ineq1.7}
  \omega(T)\leq \omega\bra{[t_{ij}]},\,\,\mbox{where}\, t_{ij}=\left\{
                \begin{array}{ll}
                  \omega(T_{ij}), & \hbox{if $i=j$;} \\
                  \norm{T_{ij}}, & \hbox{if $i\neq j$.}
                \end{array}
              \right.
\end{equation}
In \cite{Buz}, Buzano obtained the following extension of the celebrated Cauchy-Schwarz inequality in a real
or complex inner product space $\h$
\begin{equation}\label{Buz-A1}
  \abs{\seq{x,z}\seq{z,y}}\leq \frac{\norm{z}^2}{2}\bra{\norm{x}\norm{y}+\abs{\seq{x,y}}}\,\,\bra{x,y,z\in\h}.
\end{equation}
When $x=y$ this inequality becomes the Cauchy-Schwarz inequality
\begin{equation}\label{CS1}
  \abs{\seq{x,y}}^2\leq \norm{x}^2\norm{y}^2.
\end{equation}
Recently, Khosravi et al. \cite{KDM} proved that if $x,y,z\in\h$  and $\alpha\in\c\setminus\{0\}$, then
\begin{equation}\label{Gbuz}
  \abs{\seq{x,z}\seq{z,y}}\leq \frac{\norm{z}^2}{\abs{\alpha}}\bra{\max\set{1,\abs{\alpha-1}}\norm{x}\norm{y}+\abs{\seq{x,y}}}.
\end{equation}
\begin{remark} When $\alpha=2$ the inequality (\ref{Gbuz}) becomes the Buzano inequality (\ref{Buz-A1}).
\end{remark}
\section{Preliminaries}
In this section, we establish new upper and lower bounds for the numerical radius of $2 \times 2$ operator matrices, improving upon existing results in the literature. Our approach builds on several key lemmas that we present first.

\subsection{Preliminary Lemmas}

We begin with a result about the numerical radius of non-negative matrices:

\begin{lemma} \cite[p. 44]{Horn}\label{lemma-Horn}
Let $A=[a_{ij}]\in M_n(\mathbb{C})$ be such that $a_{ij}\geq 0$ for all $i, j = 1,2,\ldots,n$. Then
\begin{equation*}
  \omega(A)\leq \frac{1}{2}r([a_{ij}+a_{ji}]).
\end{equation*}
\end{lemma}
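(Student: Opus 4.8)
The plan is to reduce the numerical radius estimate to a Rayleigh-quotient bound for a non-negative symmetric matrix and then invoke the Perron--Frobenius theorem. First I would fix a unit vector $x=(x_1,\dots,x_n)\in\c^n$ and expand $\seq{Ax,x}=\sum_{i,j}a_{ij}x_j\overline{x_i}$. Since every $a_{ij}\geq 0$, the triangle inequality gives
\begin{equation*}
\abs{\seq{Ax,x}}\leq \sum_{i,j}a_{ij}\abs{x_i}\abs{x_j}=\seq{A\abs{x},\abs{x}},
\end{equation*}
where $\abs{x}=(\abs{x_1},\dots,\abs{x_n})$ is again a unit vector (now real and non-negative) and the right-hand side is a non-negative real number. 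Taking the supremum, it therefore suffices to bound $\seq{Au,u}$ over non-negative unit vectors $u\in\R^n$.

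Second, for a real vector $u$ the scalar $\seq{Au,u}=u^{T}Au$ equals its own transpose $u^{T}A^{T}u$, so
\begin{equation*}
\seq{Au,u}=\seq{\tfrac{A+A^{T}}{2}\,u,\,u}=\tfrac12\seq{[a_{ij}+a_{ji}]u,u}.
\end{equation*}
Writing $B:=[a_{ij}+a_{ji}]$, which is real symmetric, the standard Rayleigh bound yields $\seq{Bu,u}\leq\lambda_{\max}(B)\norm{u}^2=\lambda_{\max}(B)$. Combining this with the first step gives $\omega(A)\leq\tfrac12\lambda_{\max}(B)$.

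Third, I would identify $\lambda_{\max}(B)$ with $r(B)$: since $B$ has non-negative entries, the Perron--Frobenius theorem guarantees that its spectral radius $r(B)$ is itself an eigenvalue of $B$, and as $B$ is symmetric all its eigenvalues are real with $r(B)=\max_i\abs{\lambda_i(B)}$; hence the Perron eigenvalue forces $\lambda_{\max}(B)=r(B)$. This gives $\omega(A)\leq\tfrac12\, r([a_{ij}+a_{ji}])$, as claimed. I do not anticipate a serious obstacle; the only step requiring genuine care is the last one, where the \emph{non-negativity} of the entries of $B$ (and not merely its symmetry) must be invoked through Perron--Frobenius to ensure that the largest eigenvalue, rather than a negative eigenvalue of larger modulus, realizes the spectral radius. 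Everything else is the elementary ``pass to absolute values'' trick combined with the usual bound $\seq{Bu,u}\leq\lambda_{\max}(B)$ for symmetric $B$.
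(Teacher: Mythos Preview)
Your argument is correct. The paper itself does not supply a proof of this lemma; it is stated with a citation to \cite[p.~44]{Horn} and used as a black box, so there is no in-paper proof to compare against. Your three steps---passing to the non-negative vector $\abs{x}$ via the triangle inequality, symmetrizing $\seq{Au,u}=\tfrac12\seq{(A+A^{T})u,u}$ for real $u$, and then bounding the Rayleigh quotient by $\lambda_{\max}(B)=r(B)$ via (the weak form of) Perron--Frobenius---constitute the standard proof and are carried out without gaps. Your caution in the final step is well placed: non-negativity of $B$ is genuinely needed to rule out a negative eigenvalue of larger modulus.
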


The following lemma is the well-known Hölder-McCarthy inequality:

\begin{lemma}\cite{kit1}[H\"older-McCarthy inequality]\label{Holder}
Let $A\in\mathcal{B}(\mathcal{H})$, $A\geq 0$ and let $x\in\mathcal{H}$ be any unit vector. Then:
\begin{enumerate}
    \item[(i)] $\langle Ax,x\rangle^r \leq \langle A^rx,x\rangle$ for $r\geq 1$.
    \item[(ii)] $\langle A^rx,x\rangle \leq \langle Ax,x\rangle^r$ for $0<r\leq 1$.
\end{enumerate}
\end{lemma}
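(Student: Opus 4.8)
The plan is to reduce both parts to the scalar Jensen inequality through the spectral theorem. Since $A\geq 0$, the spectral theorem provides a projection-valued measure $E$ on the Borel subsets of $\sigma(A)\subseteq[0,\|A\|]$ with $A=\int_{\sigma(A)}\lambda\,dE(\lambda)$. Fixing the unit vector $x$, I would pass to the scalar Borel measure $\mu$ on $[0,\|A\|]$ defined by $\mu(\Omega)=\langle E(\Omega)x,x\rangle=\|E(\Omega)x\|^{2}$. This $\mu$ is a probability measure: it is non-negative, and its total mass is $\langle E([0,\|A\|])x,x\rangle=\langle x,x\rangle=1$, which is exactly where the normalization $\|x\|=1$ enters. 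The functional calculus then gives, for every $s>0$,
\[
\langle A^{s}x,x\rangle=\int_{0}^{\|A\|}\lambda^{s}\,d\mu(\lambda),
\]
and in particular $\langle Ax,x\rangle=\int_{0}^{\|A\|}\lambda\,d\mu(\lambda)$.

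With this representation in hand, the second step is a single application of Jensen's inequality to $\varphi(t)=t^{r}$ on $[0,\infty)$. For $r\geq 1$ the function $\varphi$ is convex, so $\varphi\bigl(\int\lambda\,d\mu\bigr)\leq\int\varphi(\lambda)\,d\mu$, which is precisely $\langle Ax,x\rangle^{r}\leq\langle A^{r}x,x\rangle$ and proves (i). For $0<r\leq 1$ the function $\varphi$ is concave, the inequality reverses, and we obtain $\langle A^{r}x,x\rangle\leq\langle Ax,x\rangle^{r}$, which is (ii). Alternatively, (ii) can be deduced from (i) with no further work: if $0<r\leq 1$ then $1/r\geq 1$, so applying (i) to the positive operator $A^{r}$ with exponent $1/r$ gives $\langle A^{r}x,x\rangle^{1/r}\leq\langle (A^{r})^{1/r}x,x\rangle=\langle Ax,x\rangle$, and raising both sides to the power $r>0$ yields (ii).

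I do not expect a genuine obstacle here, since the statement is classical; the only points demanding a moment's attention are the verification that $\mu$ is a probability measure (using $\|x\|=1$ and $A\geq 0$) and the observation that $t\mapsto t^{r}$ is continuous and vanishes at $t=0$ for every $r>0$, so the possibility that $A$ is non-invertible, i.e. $0\in\sigma(A)$, causes no difficulty in the integral representation. If one wishes to bypass the measure-theoretic formulation, the same conclusion follows from the tangent-line form of convexity: for $r\geq 1$ and $c=\langle Ax,x\rangle$ one has the scalar inequality $t^{r}\geq c^{r}+rc^{r-1}(t-c)$ for all $t\geq 0$, hence the operator inequality $A^{r}\geq c^{r}I+rc^{r-1}(A-cI)$ via the functional calculus, and evaluating $\langle\,\cdot\,x,x\rangle$ on both sides gives $\langle A^{r}x,x\rangle\geq c^{r}$ because the linear term contributes $rc^{r-1}(\langle Ax,x\rangle-c)=0$.
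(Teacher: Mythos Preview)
Your argument is correct: the spectral-theorem reduction to a probability measure followed by Jensen's inequality is the standard proof of the H\"older--McCarthy inequality, and your alternative derivation of (ii) from (i) via the exponent $1/r$ is also valid. The paper itself does not prove this lemma; it is quoted from \cite{kit1} as a preliminary result, so there is no proof in the paper to compare against.
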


Next, we state a version of Jensen's inequality for power functions:

\begin{lemma}\label{J}
Let $a,b>0$ and $0\leq \alpha\leq 1$. Then
\begin{equation}
    a^{\alpha}b^{1-\alpha} \leq \alpha a + (1-\alpha)b \leq \left(\alpha a^r + (1-\alpha)b^r\right)^{\frac{1}{r}} \quad \text{for} \quad r\geq 1.
\end{equation}
\end{lemma}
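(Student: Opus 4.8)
The plan is to treat the two inequalities separately, each as a direct consequence of Jensen's inequality applied to a suitably chosen convex or concave function. For the left-hand inequality $a^{\alpha}b^{1-\alpha}\leq \alpha a+(1-\alpha)b$, I would invoke the concavity of the natural logarithm on $(0,\infty)$: since $\log$ is concave, $\log(\alpha a+(1-\alpha)b)\geq \alpha\log a+(1-\alpha)\log b=\log\bra{a^{\alpha}b^{1-\alpha}}$, and exponentiating (using that $\exp$ is increasing) yields the claim. This is just the weighted arithmetic–geometric mean inequality, and the boundary cases $\alpha=0$ and $\alpha=1$ are trivial.

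For the right-hand inequality $\alpha a+(1-\alpha)b\leq \bra{\alpha a^{r}+(1-\alpha)b^{r}}^{1/r}$, I would use the convexity of $t\mapsto t^{r}$ on $(0,\infty)$ for $r\geq 1$. Jensen's inequality then gives $\bra{\alpha a+(1-\alpha)b}^{r}\leq \alpha a^{r}+(1-\alpha)b^{r}$, and taking $r$-th roots of both sides — legitimate since $t\mapsto t^{1/r}$ is increasing on $[0,\infty)$ — produces the desired bound. The case $r=1$ is an equality, so there is nothing to check there.

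I do not anticipate a genuine obstacle here; the only points requiring a moment's care are the degenerate parameter values ($\alpha\in\{0,1\}$, $r=1$) and the observation that all quantities involved are strictly positive so that $\log$, $\exp$, and the fractional powers are all well defined and monotone. Combining the two displayed chains gives the full statement.

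\begin{proof}
Since $a,b>0$, the degenerate cases $\alpha\in\{0,1\}$ give equality throughout, so assume $0<\alpha<1$. By concavity of $\log$ on $(0,\infty)$,
\begin{equation*}
  \log\bra{\alpha a+(1-\alpha)b}\geq \alpha\log a+(1-\alpha)\log b=\log\bra{a^{\alpha}b^{1-\alpha}},
\end{equation*}
and applying the increasing function $\exp$ yields $a^{\alpha}b^{1-\alpha}\leq \alpha a+(1-\alpha)b$. For the second inequality, the case $r=1$ is trivial, so let $r>1$. Since $t\mapsto t^{r}$ is convex on $(0,\infty)$, Jensen's inequality gives
\begin{equation*}
  \bra{\alpha a+(1-\alpha)b}^{r}\leq \alpha a^{r}+(1-\alpha)b^{r},
\end{equation*}
and taking the increasing function $t\mapsto t^{1/r}$ of both sides gives $\alpha a+(1-\alpha)b\leq \bra{\alpha a^{r}+(1-\alpha)b^{r}}^{1/r}$. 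Chaining the two displays completes the proof.
\end{proof}
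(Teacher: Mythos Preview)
Your proof is correct and entirely standard: the first inequality is the weighted AM--GM via concavity of $\log$, and the second is Jensen for the convex map $t\mapsto t^r$. The paper itself states this lemma as a known preliminary result without supplying a proof, so there is nothing to compare against; your argument is exactly the expected justification.
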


The following result concerns convex functions of positive operators:

\begin{lemma}\cite{Silva}\label{lemma2.2}
Let $\psi$ be a non-negative convex function on $[0,\infty)$ and $T,S\in\mathcal{B}(\mathcal{H})$ be positive operators. Then
\begin{equation*}
    \left\|\psi\left(\frac{T+S}{2}\right)\right\| \leq \left\|\frac{\psi(T)+\psi(S)}{2}\right\|.
\end{equation*}
In particular,
\begin{equation*}
    \|(T+S)^r\| \leq 2^{r-1}\|T^r+S^r\| \quad \text{for all } r\geq 1.
\end{equation*}
\end{lemma}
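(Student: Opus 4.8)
The strategy is to reduce the (operator‑norm) inequality to a scalar Jensen inequality evaluated vector by vector, and then to recover the norm from the spectrum. Put $C=\frac{T+S}{2}$, a positive operator, and assume for the moment that $\psi$ is continuous on $[0,\infty)$ (a non‑negative convex function is automatically continuous on $(0,\infty)$; the only possible jump, at $0$, is handled at the end). Fix a unit vector $x$. Since $\langle Cx,x\rangle=\frac12\bigl(\langle Tx,x\rangle+\langle Sx,x\rangle\bigr)$ and $\langle Tx,x\rangle,\langle Sx,x\rangle\ge 0$ lie in the domain of $\psi$, convexity gives $\psi(\langle Cx,x\rangle)\le\frac12\bigl(\psi(\langle Tx,x\rangle)+\psi(\langle Sx,x\rangle)\bigr)$. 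Applying Jensen's inequality to the spectral measure of $T$ at $x$ — the power case $t\mapsto t^{r}$ being exactly the H\"older-McCarthy inequality of Lemma \ref{Holder}(i) — yields $\psi(\langle Tx,x\rangle)\le\langle\psi(T)x,x\rangle$, and likewise with $S$. Hence
\[
\psi(\langle Cx,x\rangle)\ \le\ \left\langle\frac{\psi(T)+\psi(S)}{2}\,x,x\right\rangle\ \le\ \norm{\frac{\psi(T)+\psi(S)}{2}}\qquad(\norm{x}=1).
\]

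Next I would pass from this pointwise bound to a bound on $\norm{\psi(C)}$. Because $\psi\ge 0$ is continuous, the positive operator $\psi(C)$ satisfies $\norm{\psi(C)}=\sup_{\lambda\in\sigma(C)}\psi(\lambda)$ by the spectral mapping theorem. For $\lambda\in\sigma(C)$ choose a Weyl sequence of unit vectors $x_n$ with $\norm{(C-\lambda I)x_n}\to 0$; then $\langle Cx_n,x_n\rangle\to\lambda$, so by continuity of $\psi$ and the displayed estimate $\psi(\lambda)=\lim_n\psi(\langle Cx_n,x_n\rangle)\le\norm{\frac{\psi(T)+\psi(S)}{2}}$. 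Taking the supremum over $\lambda\in\sigma(C)$ proves the first assertion. If $\psi$ happens to jump at $0$, the only point needing separate attention is $\lambda=0$ when it is an eigenvalue of $C$: an eigenvector $x_0$ then forces $\langle Tx_0,x_0\rangle=\langle Sx_0,x_0\rangle=0$, hence $Tx_0=Sx_0=0$, so $\langle\frac{\psi(T)+\psi(S)}{2}x_0,x_0\rangle=\psi(0)$ and the bound persists; if $0\in\sigma(C)$ is not an eigenvalue it does not enter $\norm{\psi(C)}$. For the ``in particular'' statement, apply the first part to $\psi(t)=t^{r}$ (non‑negative and convex on $[0,\infty)$ for $r\ge 1$): since $\psi\bigl(\frac{T+S}{2}\bigr)=2^{-r}(T+S)^{r}$ and $\frac{\psi(T)+\psi(S)}{2}=\frac12(T^{r}+S^{r})$, the inequality reads $2^{-r}\norm{(T+S)^{r}}\le\frac12\norm{T^{r}+S^{r}}$, i.e. $\norm{(T+S)^{r}}\le 2^{\,r-1}\norm{T^{r}+S^{r}}$.

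The step that genuinely requires thought is the passage from the vectorwise estimate to a statement about $\norm{\psi(C)}$: it works precisely because $\psi$ is convex, so that its supremum over $\sigma(C)$ is controlled by its values on the numerical range $\{\langle Cx,x\rangle:\norm{x}=1\}$, whose closure for the self‑adjoint $C$ is the interval $[\min\sigma(C),\norm{C}]$. An alternative route realizes $\frac{T+S}{2}$ as the $(1,1)$‑corner of $W(T\oplus S)W^{*}$ on $\hh$, with $W=\frac1{\sqrt2}\left[\begin{smallmatrix}I&I\\I&-I\end{smallmatrix}\right]$, and invokes a Jensen‑type inequality for the unital positive map ``compression to the corner''; but for $\psi$ merely convex (not operator convex) the Davis-Choi-Jensen operator inequality is unavailable, so one is pushed back to essentially the same vectorwise argument. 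The same scheme delivers the inequality for every unitarily invariant norm if one instead quotes the weak‑majorization machinery behind \cite{Silva}, but only the operator‑norm case is needed here.
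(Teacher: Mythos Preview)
The paper does not supply its own proof of this lemma: it is quoted as a known result from \cite{Silva} and used as a black box. There is therefore nothing to compare your argument against in the paper itself.

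Your proof is correct. The chain
\[
\psi(\langle Cx,x\rangle)\le\tfrac12\bigl(\psi(\langle Tx,x\rangle)+\psi(\langle Sx,x\rangle)\bigr)\le\Bigl\langle\tfrac{\psi(T)+\psi(S)}{2}x,x\Bigr\rangle
\]
is exactly scalar convexity followed by Jensen for the spectral measure, and the passage to $\norm{\psi(C)}$ via $\sup_{\lambda\in\sigma(C)}\psi(\lambda)$ and approximate eigenvectors is sound because for self-adjoint $C$ every spectral point is an approximate eigenvalue. The derivation of the ``in particular'' case from $\psi(t)=t^{r}$ is routine and correct.

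Two minor remarks. First, the digression on a possible jump of $\psi$ at $0$ is unnecessary for the paper's purposes (only continuous $\psi$, indeed only $t\mapsto t^{r}$, is ever used) and the claim that ``if $0\in\sigma(C)$ is not an eigenvalue it does not enter $\norm{\psi(C)}$'' would need more care in the Borel functional calculus; you can simply drop that paragraph. Second, your closing comment about the compression/Davis--Choi--Jensen route is accurate: since only scalar convexity is assumed, the vectorwise argument is the right one, and the operator-Jensen alternative genuinely does not apply here.
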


We also need Bohr's inequality (see \cite{Bha, Bohr, AA}):

\begin{lemma}[Bohr's inequality]\label{Logain1}
Let $a_i$, $i=1,\ldots,n$ be positive real numbers. Then
\begin{equation}\label{hoopy1}
    \left(\sum_{i=1}^n a_i\right)^r \leq n^{r-1}\sum_{i=1}^n a_i^r \quad \text{for } r\geq 1.
\end{equation}
\end{lemma}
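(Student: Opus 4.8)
The plan is to derive the inequality from the convexity of the function $f(t)=t^{r}$ on $[0,\infty)$, which holds precisely because $r\geq 1$. The quickest route is Jensen's inequality: applying it to $f$ with the uniform weights $\frac1n,\dots,\frac1n$ at the points $a_1,\dots,a_n$ gives $\bra{\frac1n\sum_{i=1}^n a_i}^{r}\leq \frac1n\sum_{i=1}^n a_i^{r}$, and multiplying through by $n^{r}$ yields $\bra{\sum_{i=1}^n a_i}^{r}\leq n^{r-1}\sum_{i=1}^n a_i^{r}$. Equivalently, one can invoke Hölder's inequality with conjugate exponents $r$ and $\frac{r}{r-1}$ applied to the pair $(a_i)_i$ and $(1)_i$, namely $\sum_{i=1}^n a_i\leq \bra{\sum_{i=1}^n a_i^{r}}^{1/r}\bra{\sum_{i=1}^n 1}^{(r-1)/r}=n^{(r-1)/r}\bra{\sum_{i=1}^n a_i^{r}}^{1/r}$, and raise both sides to the $r$-th power. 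The case $r=1$ is an equality and zeros among the $a_i$ cause no difficulty.

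If one prefers a proof that stays entirely inside the tools already assembled in this section, I would instead argue by induction on $n$ using Lemma \ref{J}. The base case $n=1$ is trivial. For the inductive step, set $S=\sum_{i=1}^{n}a_i$ and write $S+a_{n+1}=(n+1)\bra{\frac{n}{n+1}\cdot\frac{S}{n}+\frac{1}{n+1}a_{n+1}}$; applying the second inequality of Lemma \ref{J} with $\alpha=\frac{n}{n+1}$, $a=\frac{S}{n}$, $b=a_{n+1}$ gives $\bra{\frac{n}{n+1}\cdot\frac{S}{n}+\frac{1}{n+1}a_{n+1}}^{r}\leq \frac{n}{n+1}\cdot\frac{S^{r}}{n^{r}}+\frac{1}{n+1}a_{n+1}^{r}$, hence $(S+a_{n+1})^{r}\leq (n+1)^{r-1}\frac{S^{r}}{n^{r-1}}+(n+1)^{r-1}a_{n+1}^{r}$. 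Inserting the inductive hypothesis $S^{r}\leq n^{r-1}\sum_{i=1}^{n}a_i^{r}$ collapses the right-hand side to $(n+1)^{r-1}\sum_{i=1}^{n+1}a_i^{r}$, completing the induction.

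There is essentially no genuine obstacle here — the statement is classical and the cited references \cite{Bha, Bohr, AA} already contain it — so the only points requiring care are that the convexity/Jensen step must be invoked with $r\geq 1$ (for $0<r<1$ the inequality reverses), and, in the inductive argument, that the weights $\frac{n}{n+1}$ and $\frac{1}{n+1}$ are chosen so that the prefactor $(n+1)^{r}$ distributes to produce exactly $(n+1)^{r-1}$ in front of each term. I would present the one-line Jensen proof in the text and, if desired, record the Lemma \ref{J} induction as a remark, since it illustrates how the power-mean lemma feeds into the later numerical-radius estimates.
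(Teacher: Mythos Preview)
Your proposal is correct. The paper itself does not supply a proof of this lemma; it is stated as a preliminary result with citations to \cite{Bha, Bohr, AA}, so there is no in-paper argument to compare against. Both your Jensen/H\"older one-liner and your induction via Lemma~\ref{J} are valid and standard; the inductive version is a nice touch since it shows the result follows from tools already assembled in the preliminaries, though for a cited lemma of this kind the Jensen argument is more than sufficient.
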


The generalized mixed Schwartz inequality will be particularly useful:

\begin{lemma}\label{Lem2.3}
Let $T\in\mathcal{B}(\mathcal{H})$ and $x,y\in\mathcal{H}$ be any vectors.
\begin{enumerate}
    \item[(i)] If $\alpha,\beta\geq 0$ with $\alpha+\beta=1$, then $|\langle Tx,y\rangle|^2 \leq \langle |T|^{2\alpha}x,x\rangle \langle |T|^{2\beta}y,y\rangle$.
    \item[(ii)] If $f,g$ are non-negative continuous functions on $[0,\infty)$ satisfying $f(t)g(t)=t$ ($t\geq 0$), then $|\langle Tx,y\rangle| \leq \|f(|T|)x\| \|g(|T^*|)y\|$.
\end{enumerate}
\end{lemma}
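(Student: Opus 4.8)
The plan is to establish the general statement (ii) first and then read off (i) as the power special case $f(t)=t^{\alpha}$, $g(t)=t^{\beta}$, for which the constraint $f(t)g(t)=t$ holds precisely when $\alpha+\beta=1$. The engine throughout is the polar decomposition $T=U|T|$, where $U$ is the partial isometry with initial space $\overline{\operatorname{ran}|T|}$ that annihilates $\ker T$. The one structural fact I would isolate at the outset is the intertwining relation $U\varphi(|T|)=\varphi(|T^{*}|)U$, valid for every continuous $\varphi$ on the spectrum. This follows from $U|T|=|T^{*}|U$ (itself a consequence of $|T^{*}|=U|T|U^{*}$ together with $U^{*}U|T|=|T|$) by induction on monomials and then Stone--Weierstrass; no normalization $\varphi(0)=0$ is needed, since $U$ already kills $\ker T$.

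For (ii) I would use the functional calculus to factor $|T|=g(|T|)f(|T|)$, so that $T=U g(|T|)f(|T|)=g(|T^{*}|)\,U f(|T|)$ after applying the intertwining identity to $g$. Pairing against $y$ and moving the self-adjoint factor $g(|T^{*}|)$ across the inner product gives $\langle Tx,y\rangle=\langle Uf(|T|)x,\,g(|T^{*}|)y\rangle$; one application of Cauchy--Schwarz, followed by $\|U\|\le 1$, yields $|\langle Tx,y\rangle|\le \|f(|T|)x\|\,\|g(|T^{*}|)y\|$, which is exactly (ii) as worded. Specializing $f(t)=t^{\alpha}$, $g(t)=t^{\beta}$ and squaring then produces $|\langle Tx,y\rangle|^{2}\le \langle|T|^{2\alpha}x,x\rangle\,\langle|T^{*}|^{2\beta}y,y\rangle$, using $\||T|^{\alpha}x\|^{2}=\langle|T|^{2\alpha}x,x\rangle$ and $\||T^{*}|^{\beta}y\|^{2}=\langle|T^{*}|^{2\beta}y,y\rangle$ (both factors are powers of positive operators).

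Here is the main obstacle, and it is conceptual rather than computational: the intertwining step unavoidably converts $|T|$ into $|T^{*}|$ in the factor attached to $y$, so every route through the polar decomposition delivers $\langle|T^{*}|^{2\beta}y,y\rangle$, whereas statement (i) as printed carries $\langle|T|^{2\beta}y,y\rangle$. The inequality with $|T|$ in \emph{both} factors cannot be repaired, since it is false in general: taking $\mathcal{H}=\mathbb{C}^{2}$, $T=\left(\begin{smallmatrix}0&1\\0&0\end{smallmatrix}\right)$, $x=e_{2}$, $y=e_{1}$ and $\alpha=\beta=\tfrac12$ gives $|\langle Tx,y\rangle|^{2}=1$ while $\langle|T|x,x\rangle\,\langle|T|y,y\rangle=1\cdot 0=0$, because $|T|e_{1}=0$. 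I would therefore flag that (i) must be read with $|T^{*}|$ in the second factor, matching exactly the $|T|/|T^{*}|$ asymmetry already present in (ii); with that correction (i) is the clean power specialization of (ii) and the argument above goes through verbatim.
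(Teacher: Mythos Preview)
The paper does not actually prove this lemma: it is listed among the preliminary results (the ``generalized mixed Schwartz inequality'') without argument or citation, so there is no proof to compare against. Your proof of (ii) via the polar decomposition and the intertwining identity $U\varphi(|T|)=\varphi(|T^{*}|)U$ is the standard one and is correct.

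Your diagnosis of (i) is also correct, and worth emphasizing: as printed, with $|T|^{2\beta}$ rather than $|T^{*}|^{2\beta}$ in the second factor, the inequality is false, and your $2\times 2$ nilpotent counterexample is the canonical one. The intended statement is the classical mixed Schwarz inequality $|\langle Tx,y\rangle|^{2}\le \langle |T|^{2\alpha}x,x\rangle\langle |T^{*}|^{2\beta}y,y\rangle$, which is exactly what your specialization of (ii) delivers. This is consistent with how the paper actually \emph{uses} the lemma later (e.g.\ in the proofs of Theorems \ref{Theorem2.10} and \ref{theorem2.15}), where only part (ii) is invoked and the $|T|/|T^{*}|$ asymmetry is handled correctly. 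So the discrepancy is a typographical slip in the statement of (i), not a gap in your argument.
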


The next lemma provides fundamental properties of numerical radii for operator matrices:

\begin{lemma}\cite{HK}\label{lemma5.1}
Let $T,S\in\mathcal{B}(\mathcal{H})$. Then:
\begin{enumerate}
    \item[(a)] $\omega\left(\begin{bmatrix} T & 0 \\ 0 & S \end{bmatrix}\right) = \max\{\omega(T),\omega(S)\}$.
    \item[(b)] $\omega\left(\begin{bmatrix} 0 & T \\ S & 0 \end{bmatrix}\right) = \omega\left(\begin{bmatrix} 0 & S \\ T & 0 \end{bmatrix}\right)$.
    \item[(c)] $\omega\left(\begin{bmatrix} T & S \\ S & T \end{bmatrix}\right) = \max\{\omega(T-S),\omega(T+S)\}$.
    In particular, $\omega\left(\begin{bmatrix} 0 & S \\ S & 0 \end{bmatrix}\right) = \omega(S)$.
\end{enumerate}
\end{lemma}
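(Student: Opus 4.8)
The plan is to deduce all three items from the unitary invariance of the numerical radius, supplemented in part~(a) by a direct estimate on block vectors.

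For part~(a), write a generic unit vector of $\hh$ as $(x,y)$ with $\norm{x}^2+\norm{y}^2=1$. Then $\seq{\begin{bmatrix} T & 0 \\ 0 & S\end{bmatrix}(x,y),(x,y)}=\seq{Tx,x}+\seq{Sy,y}$, so the triangle inequality gives $\abs{\seq{\begin{bmatrix} T & 0 \\ 0 & S\end{bmatrix}(x,y),(x,y)}}\leq\omega(T)\norm{x}^2+\omega(S)\norm{y}^2\leq\max\set{\omega(T),\omega(S)}$. Taking the supremum over all such $(x,y)$ yields the inequality ``$\leq$''. Conversely, testing on vectors of the form $(x,0)$ and $(0,y)$ with $\norm{x}=\norm{y}=1$ shows $\omega\bra{\begin{bmatrix} T & 0 \\ 0 & S\end{bmatrix}}\geq\omega(T)$ and $\omega\bra{\begin{bmatrix} T & 0 \\ 0 & S\end{bmatrix}}\geq\omega(S)$, hence ``$\geq$'' and the asserted equality.

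For part~(b), use the flip $U=\begin{bmatrix} 0 & I \\ I & 0\end{bmatrix}$, which is unitary on $\hh$, and check by block multiplication that $U^*\begin{bmatrix} 0 & T \\ S & 0\end{bmatrix}U=\begin{bmatrix} 0 & S \\ T & 0\end{bmatrix}$. Since $\omega$ is invariant under unitary conjugation, the two numerical radii coincide. For part~(c), introduce $V=\frac{1}{\sqrt2}\begin{bmatrix} I & I \\ I & -I\end{bmatrix}$, which is self-adjoint and satisfies $V^2=I$, hence is unitary. A block computation gives $V^*\begin{bmatrix} T & S \\ S & T\end{bmatrix}V=\begin{bmatrix} T+S & 0 \\ 0 & T-S\end{bmatrix}$, so unitary invariance together with part~(a) yields $\omega\bra{\begin{bmatrix} T & S \\ S & T\end{bmatrix}}=\max\set{\omega(T+S),\omega(T-S)}$. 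Setting $T=0$ and using $\omega(-S)=\omega(S)$ gives the ``in particular'' statement.

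Each step reduces to a short, routine computation, and the underlying facts---unitary invariance of $\omega$ and the additivity of the quadratic form of a block-diagonal operator---are elementary, so I do not foresee a genuine obstacle. The only points demanding care are correctly tracking the $2\times2$ block products in (b) and (c) and confirming that the matrices $U$ and $V$ used there are indeed unitary.
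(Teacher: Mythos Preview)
Your proof is correct and is the standard argument for these identities. Note that the paper does not actually prove this lemma: it is stated with a citation to \cite{HK} and no proof is given, so there is nothing in the paper to compare your argument against.
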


Finally, we state a result about numerical radii of operator matrices:

\begin{lemma} \cite[Theorem 2]{AF}\label{theorem2.7}
Let $S=[S_{ij}]$ be an $n\times n$ operator matrix with $S_{ij}\in \mathcal{L}(\mathcal{H}_j,\mathcal{H}_i)$, $1\leq i,j\leq n$. Then
\begin{equation*}
    \omega(S) \leq \omega([s_{ij}]), \quad \text{where } s_{ij} = \begin{cases}
        \omega(S_{ij}), & \text{if } i=j, \\
        \omega\left(\begin{bmatrix} O & S_{ij} \\ S_{ji} & O \end{bmatrix}\right), & \text{if } i\neq j.
    \end{cases}
\end{equation*}
\end{lemma}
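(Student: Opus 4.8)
The plan is to run the standard ``sectional'' argument for numerical radius inequalities of operator matrices, taking care to renormalise the off-diagonal blocks correctly. Fix a unit vector $x=\bigoplus_{i=1}^{n}x_i$ in $\bigoplus_{i=1}^{n}\h_i$ and put $u_i=\norm{x_i}$, so that $u=(u_1,\dots,u_n)$ is a unit vector in $\R^{n}$ with non-negative entries. Expanding the quadratic form and pairing the $(i,j)$ and $(j,i)$ contributions for $i<j$ gives
\[
\seq{Sx,x}=\sum_{i=1}^{n}\seq{S_{ii}x_i,x_i}+\sum_{1\le i<j\le n}\bra{\seq{S_{ij}x_j,x_i}+\seq{S_{ji}x_i,x_j}},
\]
so by the triangle inequality it is enough to estimate each group. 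For the diagonal terms, $\abs{\seq{S_{ii}x_i,x_i}}\le\omega(S_{ii})\norm{x_i}^{2}=s_{ii}u_i^{2}$ (trivially if $u_i=0$, and otherwise by applying the definition of $\omega$ to the unit vector $x_i/u_i$).

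For an off-diagonal pair $\{i,j\}$ with $u_iu_j\neq0$, write $M_{ij}=\begin{bmatrix}O&S_{ij}\\S_{ji}&O\end{bmatrix}$ for the operator matrix acting on $\h_i\oplus\h_j$, and apply it to the unit vector $y=\tfrac{1}{\sqrt2}\bra{u_i^{-1}x_i}\oplus\tfrac{1}{\sqrt2}\bra{u_j^{-1}x_j}$. A direct computation gives $\seq{M_{ij}y,y}=\tfrac{1}{2u_iu_j}\bra{\seq{S_{ij}x_j,x_i}+\seq{S_{ji}x_i,x_j}}$, hence
\[
\abs{\seq{S_{ij}x_j,x_i}+\seq{S_{ji}x_i,x_j}}\le 2u_iu_j\,\omega(M_{ij})=2s_{ij}u_iu_j,
\]
an inequality that holds trivially when $u_iu_j=0$ as well. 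Summing the diagonal and off-diagonal estimates yields $\abs{\seq{Sx,x}}\le\sum_{i=1}^{n}s_{ii}u_i^{2}+2\sum_{i<j}s_{ij}u_iu_j$. Now Lemma~\ref{lemma5.1}(b) gives $\omega(M_{ij})=\omega(M_{ji})$, i.e.\ $s_{ij}=s_{ji}$, so $[s_{ij}]$ is a real symmetric, hence self-adjoint, matrix and the previous right-hand side is exactly $\seq{[s_{ij}]u,u}$. Since a self-adjoint matrix satisfies $\seq{[s_{ij}]u,u}\le\sup_{\norm{v}=1}\abs{\seq{[s_{ij}]v,v}}=\omega([s_{ij}])$, we get $\abs{\seq{Sx,x}}\le\omega([s_{ij}])$; taking the supremum over unit vectors $x$ gives $\omega(S)\le\omega([s_{ij}])$. (Alternatively, if one does not symmetrise, Lemma~\ref{lemma-Horn} could be used to pass from the quadratic form back to $\omega([s_{ij}])$.)

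The only genuinely non-mechanical point is extracting the factor $u_iu_j$ — rather than $u_i^{2}+u_j^{2}$ — from each off-diagonal pair. Feeding $M_{ij}$ the unnormalised vector $x_i\oplus x_j$ would only produce $\abs{\seq{Sx,x}}\le\sum_i\bra{s_{ii}+\sum_{j\neq i}s_{ij}}u_i^{2}$, i.e.\ a bound by the largest row sum of $[s_{ij}]$, which dominates $\omega([s_{ij}])$ and is therefore too weak. Renormalising by $\tfrac{1}{\sqrt2}u_i^{-1}$ and $\tfrac{1}{\sqrt2}u_j^{-1}$ is precisely what collapses the estimate to the quadratic form of $[s_{ij}]$; beyond that one only needs to be mildly careful with the degenerate cases $u_i=0$ and to invoke Lemma~\ref{lemma5.1}(b) to guarantee that $[s_{ij}]$ is symmetric.
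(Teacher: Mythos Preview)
The paper does not give its own proof of this lemma; it is quoted verbatim from \cite[Theorem~2]{AF} as a preliminary result. Your argument is correct and is essentially the standard proof one finds in that reference: expand $\seq{Sx,x}$, bound the diagonal terms by $\omega(S_{ii})u_i^{2}$, and for each off-diagonal pair feed the block $M_{ij}$ the renormalised unit vector $\tfrac{1}{\sqrt2}(u_i^{-1}x_i)\oplus\tfrac{1}{\sqrt2}(u_j^{-1}x_j)$ to pull out the factor $2u_iu_j$.

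Two minor remarks. First, Lemma~\ref{lemma5.1}(b) as stated in the paper assumes $T,S\in\bh$ act on the \emph{same} space, whereas here $\h_i$ and $\h_j$ may differ; the swap-unitary argument behind it goes through unchanged, so this is only a formal gap. Second, once you have $\abs{\seq{Sx,x}}\le\sum_{i,j}s_{ij}u_iu_j=\seq{[s_{ij}]u,u}$, you do not actually need the symmetry of $[s_{ij}]$ to conclude: $u$ is a unit vector in $\c^{n}$, so $\seq{[s_{ij}]u,u}\le\omega([s_{ij}])$ is immediate from the definition of the numerical radius.
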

\section{Results}
The upcoming outcome pertains to the numerical radius inequality of an $n\times n$ operator matrix featuring a solitary non-zero row.
\begin{theorem}\label{Theorem2.9} Let $S_j\in \L(\h_j,\h_1)$, ($1\leq j\leq n$). Then
\begin{equation*}
  \omega\bra{\begin{bmatrix}
               S_1 & S_2 &\cdots & S_n \\
               0 & \cdots & \cdots & 0 \\
               \vdots & \cdots & \cdots & \vdots \\
               0 & \cdots & \cdots & 0 \\
             \end{bmatrix}}\leq \frac{1}{2}\bra{\omega(S_1)+\bra{\omega(S_1)+\sum_{j=2}^{n}\norm{S_j}^2}^{\frac{1}{2}}}.
\end{equation*}
\end{theorem}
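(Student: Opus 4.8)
The plan is to estimate $\langle Tx,x\rangle$ directly, where $T$ denotes the operator matrix in the statement, acting on the direct sum $\bigoplus_{j=1}^{n}\h_j$. For a unit vector $x=(x_1,x_2,\dots,x_n)$, so that $\sum_{j=1}^{n}\norm{x_j}^2=1$, one has $Tx=(S_1x_1+S_2x_2+\dots+S_nx_n,0,\dots,0)$ and hence
\[
\langle Tx,x\rangle=\langle S_1x_1,x_1\rangle+\sum_{j=2}^{n}\langle S_jx_j,x_1\rangle .
\]
I would then apply the triangle inequality, the bound $|\langle S_1x_1,x_1\rangle|\le\omega(S_1)\norm{x_1}^2$, the Cauchy--Schwarz estimate $|\langle S_jx_j,x_1\rangle|\le\norm{S_j}\,\norm{x_j}\,\norm{x_1}$, and finally Cauchy--Schwarz once more to the sum $\sum_{j=2}^{n}\norm{S_j}\,\norm{x_j}\le\big(\sum_{j=2}^{n}\norm{S_j}^2\big)^{1/2}\big(\sum_{j=2}^{n}\norm{x_j}^2\big)^{1/2}$. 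Writing $M=\big(\sum_{j=2}^{n}\norm{S_j}^2\big)^{1/2}$ and using $\sum_{j=2}^{n}\norm{x_j}^2=1-\norm{x_1}^2$, this yields
\[
|\langle Tx,x\rangle|\le\omega(S_1)\,\norm{x_1}^2+M\,\norm{x_1}\sqrt{1-\norm{x_1}^2}.
\]

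The problem then reduces to maximizing $g(a)=\omega(S_1)\,a^2+Ma\sqrt{1-a^2}$ for $a\in[0,1]$. I would do this via the substitution $a=\cos\theta$ with $\theta\in[0,\pi/2]$: since $\cos^2\theta=\tfrac12(1+\cos2\theta)$ and $\cos\theta\sin\theta=\tfrac12\sin2\theta$, this becomes $g(\cos\theta)=\tfrac{\omega(S_1)}{2}+\tfrac12\big(\omega(S_1)\cos2\theta+M\sin2\theta\big)$, and the elementary inequality $p\cos\phi+q\sin\phi\le\sqrt{p^2+q^2}$ gives $g(a)\le\tfrac12\big(\omega(S_1)+\sqrt{\omega(S_1)^2+M^2}\big)$. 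Taking the supremum over all unit vectors $x$ then produces
\[
\omega(T)\le\frac12\left(\omega(S_1)+\Big(\omega^2(S_1)+\sum_{j=2}^{n}\norm{S_j}^2\Big)^{1/2}\right),
\]
which is the asserted bound, the term under the radical being $\omega^2(S_1)$ (for $n=1$ this already reduces to the equality $\omega(T)=\omega(S_1)$, a useful sanity check).

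An alternative route, more in the spirit of the preliminary lemmas, is to invoke Lemma~\ref{theorem2.7}: $\omega(T)\le\omega([s_{ij}])$, where $s_{11}=\omega(S_1)$, while for $j\ge2$ the $2\times2$ block $\left[\begin{smallmatrix}O&S_j\\O&O\end{smallmatrix}\right]$ squares to zero, so by (\ref{IneqB}) we get $s_{1j}=s_{j1}=\tfrac12\norm{S_j}$, and all remaining entries vanish. The matrix $[s_{ij}]$ is then a real symmetric arrowhead matrix, hence normal, so $\omega([s_{ij}])=r([s_{ij}])$; computing its nonzero eigenvalues amounts to solving $\lambda^2-\omega(S_1)\lambda-\tfrac14\sum_{j=2}^{n}\norm{S_j}^2=0$, and the spectral radius of this matrix is precisely the right-hand side above. (One could equivalently pass through Lemma~\ref{lemma-Horn}, since the entries are non-negative, but normality makes this detour unnecessary.)

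The only non-routine step is the one-dimensional optimization in the first approach, equivalently the arrowhead eigenvalue computation in the second; I do not anticipate a genuine obstacle, the essential point being to pick the right reduction. A minor care point is to confirm that the supremum of $g$ is not attained at the endpoints $a\in\{0,1\}$ and to dispose of the degenerate case $M=0$, both immediate from the trigonometric reformulation.
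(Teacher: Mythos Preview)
Your proposal is correct. The second route you sketch---apply Lemma~\ref{theorem2.7}, use (\ref{IneqB}) to get $s_{1j}=s_{j1}=\tfrac12\norm{S_j}$ for $j\ge2$, and then compute the spectral radius of the resulting arrowhead matrix---is exactly the paper's proof (the paper does pass through Lemma~\ref{lemma-Horn}, but as you note this is redundant since the symmetric matrix is already normal). Your first route, the direct estimate of $\langle Tx,x\rangle$ followed by the trigonometric maximization, is a genuinely more elementary alternative that avoids invoking the Abu-Omar--Kittaneh lemma altogether; it trades the black-box reduction for a short explicit calculation, and in doing so makes transparent why the arrowhead eigenvalue shows up. You are also right that the term under the radical should be $\omega^2(S_1)$ rather than $\omega(S_1)$: the paper's own spectral-radius computation yields $\omega(S_1)+\sqrt{\omega^2(S_1)+\sum_{j\ge2}\norm{S_j}^2}$, so the statement as printed contains a typo.
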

\begin{proof} By  Lemma \ref{theorem2.7} and  the inequality (\ref{IneqB}), we get
\begin{eqnarray*}
 && \omega \bra{\begin{bmatrix}
               S_1 & S_2 &\cdots & S_n \\
               0 & \cdots & \cdots & 0 \\
               \vdots & \cdots & \cdots & \vdots \\
               0 & \cdots & \cdots & 0 \\
             \end{bmatrix}}\leq \omega\bra{\begin{bmatrix}
               \omega(S_1) & \omega\bra{\begin{bmatrix} 0 &S_{2} \\0& 0 \\\end{bmatrix}} &\cdots & \omega\bra{\begin{bmatrix} 0 &S_{n} \\0 & 0
               \\\end{bmatrix}} \\
               \omega\bra{\begin{bmatrix} 0 &0 \\S_{2}& 0 \\\end{bmatrix}} & 0 & \cdots & 0 \\
               \vdots & \cdots & \cdots & \vdots \\
               \omega\bra{\begin{bmatrix} 0 &0 \\S_{n}& 0 \\\end{bmatrix}} &0& \cdots & 0 \\
             \end{bmatrix}} \\
 &&=\omega\bra{\begin{bmatrix}
               \omega(S_1) & \frac{1}{2}\norm{S_2} &\cdots & \frac{1}{2}\norm{S_n} \\
               \frac{1}{2}\norm{S_2} & 0 & \cdots & 0 \\
               \vdots & \cdots & \cdots & \vdots \\
               \frac{1}{2}\norm{S_n} & 0 & \cdots & 0 \\
             \end{bmatrix}}\\
 &&=\frac{1}{2}r\bra{\begin{bmatrix}
               2\omega(S_1) & \norm{S_2} &\cdots & \norm{S_n} \\
               \norm{S_2} & 0 & \cdots & 0 \\
               \vdots & \cdots & \cdots & \vdots \\
               \norm{S_n} & 0& \cdots & 0 \\
             \end{bmatrix}}=\frac{1}{2}\bra{\omega(S_1)+\bra{\omega(S_1)+\sum_{j=2}^{n}\norm{S_j}^2}^{\frac{1}{2}}}.
\end{eqnarray*}
\end{proof}
\begin{theorem}\label{Theorem2.10} Let $\S=[S_{ij }]$ be an $n\times n$ operator matrix, where $S_{ij}\in\L(\h_j\oplus\h_i)$,
$1 \leq i, j \leq n$, and let $f$ and $g$ be as in Lemma \ref{Lem2.3}. Then
\begin{equation}\label{Ineq2.10}
  \omega^{s}(\S)\leq n^{2s-2} \omega\bra{[s_{ij}]},\,\,\mbox{where}\,\,s_{ij}=\left\{
                                                                   \begin{array}{ll}
                                                                     \frac{1}{2}\omega\bra{f^{2s}(|S_{ii}|)+g^{2s}\bra{|S_{ii}^*|}}, & \hbox{if $i=j$;}
                                                                     \\
                                                                     \norm{S_{ij}}^{s}, & \hbox{if $i\neq j$.}
                                                                   \end{array}
                                                                 \right.
\end{equation}
for all $s\geq 1$.
\end{theorem}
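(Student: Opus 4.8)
The plan is to mimic the structure of the proof of Theorem \ref{Theorem2.9}: start from Lemma \ref{theorem2.7} to reduce the off-diagonal behaviour to $2\times 2$ blocks, bound the diagonal entries using the generalized mixed Schwarz inequality (Lemma \ref{Lem2.3}(ii)) together with the Hölder--McCarthy inequality (Lemma \ref{Holder}), and then absorb the power $s$ via Bohr's inequality (Lemma \ref{Logain1}). First I would fix a unit vector $\x=(x_1,\dots,x_n)\in\bigoplus_j(\h_j\oplus\h_j)$ and estimate $\abs{\seq{\S\x,\x}}\le\sum_{i,j}\abs{\seq{S_{ij}x_j,x_i}}$. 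For the off-diagonal terms I would apply Lemma \ref{Lem2.3}(ii), or alternatively simply the bound $\abs{\seq{S_{ij}x_j,x_i}}\le\norm{S_{ij}}\,\norm{x_j}\,\norm{x_i}$; for the diagonal terms $\abs{\seq{S_{ii}x_i,x_i}}$ I would use Lemma \ref{Lem2.3}(ii) to write $\abs{\seq{S_{ii}x_i,x_i}}\le\norm{f(|S_{ii}|)x_i}\,\norm{g(|S_{ii}^*|)x_i}=\seq{f^2(|S_{ii}|)x_i,x_i}^{1/2}\seq{g^2(|S_{ii}^*|)x_i,x_i}^{1/2}$, then invoke the arithmetic--geometric mean inequality (Lemma \ref{J} with $\alpha=1/2$) to get $\le\frac12\bra{\seq{f^2(|S_{ii}|)x_i,x_i}+\seq{g^2(|S_{ii}^*|)x_i,x_i}}$.

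Next I would raise the whole estimate to the power $s\ge1$. Writing $\abs{\seq{\S\x,\x}}$ as a sum of $n^2$ non-negative terms, Bohr's inequality gives $\abs{\seq{\S\x,\x}}^s\le n^{2s-2}\sum_{i,j}\abs{\seq{S_{ij}x_j,x_i}}^s$. Now the Hölder--McCarthy inequality enters twice: for the off-diagonal powers I would like $\norm{S_{ij}}^s\norm{x_j}^s\norm{x_i}^s$ to look like an entry of the matrix $[s_{ij}]$ acting on the vector $(\norm{x_1},\dots,\norm{x_n})$, which it does directly since $s_{ij}=\norm{S_{ij}}^s$ for $i\neq j$; for the diagonal powers I would use Lemma \ref{Holder}(i) in the form $\bra{\frac12\bra{\seq{f^2(|S_{ii}|)x_i,x_i}+\seq{g^2(|S_{ii}^*|)x_i,x_i}}}^s\le\frac12\bra{\seq{f^{2s}(|S_{ii}|)x_i,x_i}+\seq{g^{2s}(|S_{ii}^*|)x_i,x_i}}$ — here I combine the convexity of $t\mapsto t^s$ (to pull the $1/2$ out) with Hölder--McCarthy applied to each positive operator $f^2(|S_{ii}|)$ and $g^2(|S_{ii}^*|)$ raised to the power $s$. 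This last quantity is at most $\omega\bra{f^{2s}(|S_{ii}|)+g^{2s}(|S_{ii}^*|)}\norm{x_i}^2\cdot$ (a normalization factor), giving exactly $2s_{ii}\norm{x_i}^2$ up to handling $\norm{x_i}^s$ versus $\norm{x_i}^2$.

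The main obstacle I anticipate is bookkeeping the vector norms: after Bohr's inequality the off-diagonal terms carry $\norm{x_i}^s\norm{x_j}^s$ while the diagonal terms carry $\norm{x_i}^2$ (after Hölder--McCarthy on the operator, the vector is not renormalized), so to package everything as $\seq{[s_{ij}]\boldsymbol\xi,\boldsymbol\xi}$ for a single unit vector $\boldsymbol\xi$ one needs $s=1$, or else a more careful argument. I expect the intended route is: reduce to unit vectors with $\sum_i\norm{x_i}^2=1$, use $\norm{x_i}^s\le\norm{x_i}$ and $\norm{x_i}\le 1$ to control the off-diagonal contributions, and observe that $\abs{\seq{\S\x,\x}}^s$ is dominated by $n^{2s-2}$ times a bilinear form in $\boldsymbol\xi=(\norm{x_1},\dots,\norm{x_n})$ whose matrix is dominated entrywise by $[s_{ij}]$; then apply Lemma \ref{lemma-Horn} or directly the fact that for non-negative matrices the numerical radius dominates such bilinear forms, yielding $\omega^s(\S)\le n^{2s-2}\omega([s_{ij}])$. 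The delicate point is ensuring the passage from "sum over $i,j$ of $a_{ij}\xi_i\xi_j$" to "$\le\omega([a_{ij}])$" is legitimate, which follows because $[s_{ij}]$ is entrywise non-negative and symmetric (note $s_{ij}=s_{ji}$), so $\seq{[s_{ij}]\boldsymbol\xi,\boldsymbol\xi}\le\omega([s_{ij}])\norm{\boldsymbol\xi}^2=\omega([s_{ij}])$; taking the supremum over $\x$ finishes the proof.
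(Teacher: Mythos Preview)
Your approach is essentially the paper's: fix a unit vector, apply Bohr's inequality to the sum of $n^2$ terms, treat diagonal entries via Lemma~\ref{Lem2.3}(ii) together with H\"older--McCarthy and AM--GM, bound off-diagonal entries by $\norm{S_{ij}}^{s}\norm{x_i}^{s}\norm{x_j}^{s}$, and finish by recognising a quadratic form in a short vector. Two small corrections and one simplification are worth noting.

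First, the matrix $[s_{ij}]$ is \emph{not} symmetric in general ($\norm{S_{ij}}\neq\norm{S_{ji}}$), but this is harmless: for any real vector $\boldsymbol\xi$ one still has $\seq{[s_{ij}]\boldsymbol\xi,\boldsymbol\xi}\le\omega([s_{ij}])\norm{\boldsymbol\xi}^2$. Second, the opening reference to Lemma~\ref{theorem2.7} is a red herring; neither you nor the paper actually uses it here.

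The bookkeeping worry you flag is real but dissolves if you track the normalisation in H\"older--McCarthy more carefully. Writing $u_i=x_i/\norm{x_i}$, one has
\[
\seq{f^{2}(|S_{ii}|)x_i,x_i}^{s/2}
=\norm{x_i}^{s}\seq{f^{2}(|S_{ii}|)u_i,u_i}^{s/2}
\le\norm{x_i}^{s}\seq{f^{2s}(|S_{ii}|)u_i,u_i}^{1/2}
=\norm{x_i}^{s-1}\seq{f^{2s}(|S_{ii}|)x_i,x_i}^{1/2},
\]
and similarly for $g$; multiplying and applying AM--GM yields the diagonal bound $s_{ii}\norm{x_i}^{2s}$, not $s_{ii}\norm{x_i}^{2}$. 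Thus \emph{every} term carries $\norm{x_i}^{s}\norm{x_j}^{s}$, the comparison vector is $z=(\norm{x_1}^{s},\dots,\norm{x_n}^{s})$ with $\norm{z}\le 1$ (since $\norm{x_i}\le 1$ and $2s\ge 2$), and $\abs{\seq{\S\x,\x}}^{s}\le n^{2s-2}\seq{[s_{ij}]z,z}\le n^{2s-2}\omega([s_{ij}])$ directly. Your workaround (replacing $\norm{x_i}^{s}$ by $\norm{x_i}$ off-diagonal) also works, but this is the cleaner route the paper takes.
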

\begin{proof} Let $\x=\begin{bmatrix}
                        x_1 \\
                        x_2\\
                        \vdots \\
                        x_n \\
                      \end{bmatrix}\in\h=\bigoplus_{i=1}^{n}\h_i$ be any unit vector. Then
\begin{eqnarray*}
  &&\abs{\seq{\S\x,\x}}^{s}= \abs{\sum_{i,j=1}^{n}\seq{S_{ij}x_j,x_i}}^{s}\leq n^{2s-2} \sum_{i,j=1}^{n}{\abs{\seq{S_{ij}x_j,x_i}}^{s}}
  \bra{\mbox{by Lemma \ref{Logain1}}}\\
   &=& n^{2s-2} \sum_{i=1}^{n}\abs{\seq{S_{ii}x_i,x_i}}^{s} +n^{2s-2} \sum_{\substack{i,j=1\\ i\neq j}}^{n}\abs{\seq{S_{ij}x_j,x_i}}^{s}\\
   &\leq&n^{2s-2}  \sum_{i=1}^{n}\seq{f^2(|S_{ii}|)x_i,x_i}^{s/2}\seq{g^2(|S_{ii}^*|)x_i,x_i}^{s/2}+n^{2s-2} \sum_{\substack{i,j=1\\ i\neq
   j}}^{n}\norm{S_{ij}}^{s}\norm{x_i}^{s}\norm{x_j}^{s}\\
   &&\bra{\mbox{by Lemma\ref{Lem2.3}}}\\
   &\leq&n^{2s-2}  \sum_{i=1}^{n}\seq{f^{2s}(|S_{ii}|)x_i,x_i}^{1/2}\seq{g^{2s}(|S_{ii}^*|)x_i,x_i}^{1/2}
   {\norm{x_i}^{2s-2}}+n^{2s-2} \sum_{\substack{i,j=1\\ i\neq j}}^{n}\norm{S_{ij}}^{s}\norm{x_i}^{s}\norm{x_j}^{s}\\
   &&\bra{\mbox{by Lemma \ref{Holder}}}\\
   &\leq&\frac{n^{2s-2}}{2}\sum_{i=1}^{n}\seq{\bra{f^{2s}(|S_{ii}|)+g^{2s}(|S_{ii}^*|)}x_i,x_i}{\norm{x_i}^{2s-2}}+n^{2s-2}\sum_{\substack{i,j=1\\
   i\neq j}}^{n}\norm{S_{ij}}^{s}\norm{x_i}^{s}\norm{x_j}^{s}\\
   &\leq& \frac{n^{2s-2}}{2}\sum_{i=1}^{n}\omega\bra{f^{2s}(|S_{ii}|)+g^{2s}(|S_{ii}^*|)}\norm{x_i}^{2s}+n^{2s-2}\sum_{\substack{i,j=1\\ i\neq
   j}}^{n}\norm{S_{ij}}^{s}\norm{x_i}^{s}\norm{x_j}^{s}\\
   &=&n^{2s-2} \sum_{i,j=1}^{n}s_{ij}\norm{x_i}^{s}\norm{x_j}^{s}=\seq{s_{ij}z,z},\mbox{where $z=\begin{bmatrix}
                       \norm {x_1}^{s} \\
                        \norm{x_2}^{s}\\
                        \vdots \\
                        \norm{x_n}^{s} \\
                      \end{bmatrix}\in\c^n$}.
\end{eqnarray*}
Since {$\norm{z}\leq 1$}, we obtain that
\begin{equation*}
  \abs{\seq{\S\x,\x}}^{s}\leq \omega\bra{[s_{ij}]}.
\end{equation*}
Taking the supremum over all unit vectors $\x\in \h$, we get
\begin{equation*}
  \omega^{s}(\S)\leq n^{2s-2} \omega\bra{[s_{ij}]}.
\end{equation*}
\end{proof}
In light of Theorem \ref{Theorem2.10}, we obtain
\begin{corollary} If $\S=\begin{bmatrix} S_{11} &S_{12} \\S_{21} & S_{22} \\\end{bmatrix}\in\L(\h_1\oplus\h_2)$, then
\begin{equation*}
  \omega^{s}(\S)\leq 2^{2s-3}\bra{\tilde{S}_{11}+\tilde{S}_{22}+\sqrt{\bra{\tilde{S}_{11}-\tilde{S}_{22}}^2+\bra{\norm{S_{12}}^{s}+\norm{S_{21}}^{s}
  }^2}},
\end{equation*}
where
\begin{equation*}
  \tilde{S}_{ii}=\frac{1}{2}\omega\bra{f^{2s}(|S_{ii}|)+g^{2s}\bra{|S_{ii}^*|}}\,\,\mbox{for $i=1,2$}.
\end{equation*}
\end{corollary}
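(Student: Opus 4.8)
The plan is to specialize Theorem \ref{Theorem2.10} to the case $n=2$ and then evaluate the resulting $2\times 2$ scalar numerical radius explicitly. First I would read off the matrix $[s_{ij}]$ from (\ref{Ineq2.10}) when $n=2$: its diagonal entries are $s_{11}=\tilde{S}_{11}$ and $s_{22}=\tilde{S}_{22}$ with $\tilde{S}_{ii}=\frac{1}{2}\omega\bra{f^{2s}(|S_{ii}|)+g^{2s}(|S_{ii}^*|)}$, while the off-diagonal entries are $s_{12}=\norm{S_{12}}^{s}$ and $s_{21}=\norm{S_{21}}^{s}$. Theorem \ref{Theorem2.10} then yields $\omega^{s}(\S)\leq 2^{2s-2}\,\omega\bra{[s_{ij}]}$, so the entire problem reduces to bounding $\omega$ of this fixed $2\times 2$ entrywise-nonnegative matrix.

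Next I would apply Lemma \ref{lemma-Horn}: since all four entries of $[s_{ij}]$ are nonnegative, $\omega\bra{[s_{ij}]}\leq \frac{1}{2}r\bra{[s_{ij}+s_{ji}]}$, where $[s_{ij}+s_{ji}]$ is the real symmetric matrix with diagonal $2\tilde{S}_{11},2\tilde{S}_{22}$ and off-diagonal entry $\norm{S_{12}}^{s}+\norm{S_{21}}^{s}$. Then I would invoke the elementary formula for the eigenvalues of a symmetric $2\times 2$ matrix $\begin{bmatrix} a & b \\ b & d \end{bmatrix}$, whose larger eigenvalue is $\frac{a+d}{2}+\sqrt{\bra{\frac{a-d}{2}}^2+b^2}$; because the symmetrized matrix has nonnegative entries this larger eigenvalue coincides with its spectral radius. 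Substituting $a=2\tilde{S}_{11}$, $d=2\tilde{S}_{22}$, $b=\norm{S_{12}}^{s}+\norm{S_{21}}^{s}$ gives $r\bra{[s_{ij}+s_{ji}]}=\tilde{S}_{11}+\tilde{S}_{22}+\sqrt{\bra{\tilde{S}_{11}-\tilde{S}_{22}}^2+\bra{\norm{S_{12}}^{s}+\norm{S_{21}}^{s}}^2}$.

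Finally I would chain the three estimates: $\omega^{s}(\S)\leq 2^{2s-2}\cdot\frac{1}{2}\,r\bra{[s_{ij}+s_{ji}]}=2^{2s-3}\bra{\tilde{S}_{11}+\tilde{S}_{22}+\sqrt{\bra{\tilde{S}_{11}-\tilde{S}_{22}}^2+\bra{\norm{S_{12}}^{s}+\norm{S_{21}}^{s}}^2}}$, which is exactly the asserted inequality. I do not expect any genuine obstacle: every step is either a direct invocation of an already-proved result (Theorem \ref{Theorem2.10}, Lemma \ref{lemma-Horn}) or the standard spectral computation for $2\times 2$ symmetric matrices. The only point deserving a line of care is justifying that the spectral radius of the symmetrized matrix equals its larger eigenvalue rather than the absolute value of the smaller one, which is immediate since all entries — hence the trace — are nonnegative, so the ``$+$'' root dominates in modulus.
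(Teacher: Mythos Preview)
Your proposal is correct and follows essentially the same route as the paper's own proof: specialize Theorem \ref{Theorem2.10} to $n=2$, apply Lemma \ref{lemma-Horn} to the resulting entrywise-nonnegative $2\times2$ matrix, and compute the spectral radius of the symmetrized matrix via the standard quadratic formula. You actually supply one line the paper omits, namely the justification that the larger eigenvalue of the symmetrized matrix coincides with its spectral radius.
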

\begin{proof} By Theorem \ref{Theorem2.10} and Lemma \ref{lemma-Horn}, we have
\begin{eqnarray*}
 &&\omega^{s}(\S)\leq 2^{2r-2}\omega\bra{\begin{bmatrix}\frac{1}{2}\omega\bra{f^{2s}(|S_{11}|)+g^{2s}\bra{|S_{11}^*|}} &\norm{S_{12}}^{s}
 \\\norm{S_{21}}^{s}
 & \frac{1}{2}\omega\bra{f^{2s}(|S_{22}|)+g^{2s}\bra{|S_{22}^*|}}\\\end{bmatrix}}\\
  &&\leq \frac{2^{2s-2}}{2}r\bra{\begin{bmatrix} 2\tilde{S}_{11} &\norm{S_{12}}^{s}+\norm{S_{21}}^{s}\\\norm{S_{12}}^{s}+\norm{S_{21}}^{s} &
  \tilde{S}_{22} \\\end{bmatrix}}\\
  &&=2^{2s-3}\bra{\tilde{S}_{11}+\tilde{S}_{22}+\sqrt{\bra{\tilde{S}_{11}-\tilde{S}_{22}}^2+\bra{\norm{S_{12}}^{s}+\norm{S_{21}}^{s} }^2}}.
\end{eqnarray*}
\end{proof}
For $\alpha\in (0,1)$, letting $f(t)=t^{\alpha}$ and $g(t)=t^{1-\alpha}$ in Theorem \ref{Theorem2.10}, we obtain the following inequality.
\begin{corollary} Let $\S=[S_{ij }]$ be an $n\times n$ operator matrix, where $S_{ij}\in\L(\h_j\oplus\h_i)$,
$1 \leq i, j \leq n$. Then
\begin{equation*}
  \omega^{s}(\S)\leq n^{2s-2} \omega\bra{[s_{ij}]},\,\,\mbox{where}\,\,s_{ij}={ \left\{
                                                                  \begin{array}{ll}
                                                                     \frac{1}{2}\omega\bra{ |S_{ii}|^{2s\alpha}+|S_{ii}^*|^{2s(1-\alpha)}}, & \hbox{if
                                                                     $i=j$;} \\
                                                                     \norm{S_{ij}}^{s}, & \hbox{if $i\neq j$.}
                                                                   \end{array}
                                                                 \right.}
\end{equation*}
for all $s\geq 1$.
\end{corollary}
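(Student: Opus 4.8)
The plan is to derive this corollary as a direct specialization of Theorem~\ref{Theorem2.10}, so essentially no new work is needed beyond checking that the chosen functions satisfy the hypotheses of Lemma~\ref{Lem2.3}. First I would set $f(t)=t^{\alpha}$ and $g(t)=t^{1-\alpha}$ for a fixed $\alpha\in(0,1)$. These are non-negative continuous functions on $[0,\infty)$, and they satisfy the defining relation $f(t)g(t)=t^{\alpha}\cdot t^{1-\alpha}=t$ for all $t\geq 0$; hence the pair $(f,g)$ is admissible in the sense required by Lemma~\ref{Lem2.3}(ii), which is precisely the hypothesis invoked in Theorem~\ref{Theorem2.10}.

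Next I would substitute these $f$ and $g$ into the diagonal entries $s_{ij}$ of the bounding matrix in Theorem~\ref{Theorem2.10}. For $i=j$ the entry is $\frac{1}{2}\omega\bra{f^{2s}(|S_{ii}|)+g^{2s}(|S_{ii}^*|)}$; with the given choice this becomes $\frac{1}{2}\omega\bra{(|S_{ii}|^{\alpha})^{2s}+(|S_{ii}^*|^{1-\alpha})^{2s}}=\frac{1}{2}\omega\bra{|S_{ii}|^{2s\alpha}+|S_{ii}^*|^{2s(1-\alpha)}}$, using the fact that $f^{2s}(t)=(f(t))^{2s}$ and functional calculus commutes with taking powers. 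The off-diagonal entries $\norm{S_{ij}}^{s}$ are unchanged since they do not involve $f$ or $g$. Plugging the resulting matrix $[s_{ij}]$ back into the conclusion $\omega^{s}(\S)\leq n^{2s-2}\omega\bra{[s_{ij}]}$ of Theorem~\ref{Theorem2.10} yields exactly the claimed inequality for all $s\geq 1$.

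There is no genuine obstacle here; the only point requiring a moment's care is the bookkeeping with the exponents, namely confirming that $f^{2s}(|S_{ii}|)=|S_{ii}|^{2s\alpha}$ and $g^{2s}(|S_{ii}^*|)=|S_{ii}^*|^{2s(1-\alpha)}$, which follows immediately from the definitions $f(t)=t^\alpha$, $g(t)=t^{1-\alpha}$ together with the spectral theorem for the positive operators $|S_{ii}|$ and $|S_{ii}^*|$. One might also remark that any $\alpha\in(0,1)$ is allowed so the corollary furnishes a one-parameter family of bounds, and the symmetric choice $\alpha=\tfrac12$ recovers the most natural special case $s_{ii}=\frac12\omega\bra{|S_{ii}|^{s}+|S_{ii}^*|^{s}}$.
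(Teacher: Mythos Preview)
Your proposal is correct and matches the paper's approach exactly: the paper simply states that the corollary follows by taking $f(t)=t^{\alpha}$ and $g(t)=t^{1-\alpha}$ in Theorem~\ref{Theorem2.10}, with no further argument given. Your additional remarks verifying the admissibility of $(f,g)$ and the exponent bookkeeping are fine elaborations of what the paper leaves implicit.
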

\begin{theorem}\label{theorem2.15} Let $\T=\begin{bmatrix} 0 &B \\C& 0 \\\end{bmatrix}\in\L(\h_1\oplus\h_2)$, $r\geq 1$, and let $f$ and $g$
be as in Lemma \ref{Lem2.3}. Then
\begin{equation*}
  \omega^r(\T)\leq 2^{r-2}\omega^{\frac{1}{2}}\bra{f^{2r}(|B|)+g^{2r}(|C^*|)}\omega^{\frac{1}{2}}\bra{f^{2r}(|C|)+g^{2r}(|B^*|)}.
\end{equation*}
\end{theorem}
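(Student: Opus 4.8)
The plan is to estimate $\abs{\seq{\T\x,\x}}^r$ over an arbitrary unit vector $\x=\begin{bmatrix}x_1\\x_2\end{bmatrix}\in\h_1\oplus\h_2$ (so $\norm{x_1}^2+\norm{x_2}^2=1$) and then pass to the supremum, in the spirit of the proof of Theorem~\ref{Theorem2.10}. Since $\T\x=\begin{bmatrix}Bx_2\\Cx_1\end{bmatrix}$ we have $\seq{\T\x,\x}=\seq{Bx_2,x_1}+\seq{Cx_1,x_2}$, so by the triangle inequality and Bohr's inequality (Lemma~\ref{Logain1} with $n=2$),
\[
\abs{\seq{\T\x,\x}}^r\le\bra{\abs{\seq{Bx_2,x_1}}+\abs{\seq{Cx_1,x_2}}}^r\le 2^{r-1}\bra{\abs{\seq{Bx_2,x_1}}^r+\abs{\seq{Cx_1,x_2}}^r}.
\]
To each scalar product I would then apply the generalized mixed Schwarz inequality (Lemma~\ref{Lem2.3}(ii)), which gives $\abs{\seq{Bx_2,x_1}}\le\norm{f(|B|)x_2}\,\norm{g(|B^*|)x_1}$ and $\abs{\seq{Cx_1,x_2}}\le\norm{f(|C|)x_1}\,\norm{g(|C^*|)x_2}$; rewriting $\norm{f(|B|)x_2}^{2}=\seq{f^{2}(|B|)x_2,x_2}$, and so on, and raising to the $r$-th power, this produces $\abs{\seq{Bx_2,x_1}}^r\le\seq{f^{2}(|B|)x_2,x_2}^{r/2}\seq{g^{2}(|B^*|)x_1,x_1}^{r/2}$, and similarly for the $C$-term.

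The next step raises the inner exponents by means of the H\"older--McCarthy inequality (Lemma~\ref{Holder}(i)), used in the normalized form $\seq{Pz,z}^{r/2}\le\norm{z}^{r-1}\seq{P^{r}z,z}^{1/2}$ valid for $P\ge0$ and $r\ge1$ (apply part (i) to the unit vector $z/\norm{z}$), together with the functional-calculus identity $\bra{f^{2}(|B|)}^{r}=f^{2r}(|B|)$. Applied to all four factors this turns the previous bound into
\[
\abs{\seq{\T\x,\x}}^r\le 2^{r-1}\norm{x_1}^{r-1}\norm{x_2}^{r-1}\bra{\seq{f^{2r}(|B|)x_2,x_2}^{1/2}\seq{g^{2r}(|B^*|)x_1,x_1}^{1/2}+\seq{f^{2r}(|C|)x_1,x_1}^{1/2}\seq{g^{2r}(|C^*|)x_2,x_2}^{1/2}}.
\]

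The decisive move is to collapse the bracketed sum using the elementary inequality $\alpha_1\beta_1+\alpha_2\beta_2\le(\alpha_1^{2}+\alpha_2^{2})^{1/2}(\beta_1^{2}+\beta_2^{2})^{1/2}$, pairing the two $\h_2$-quantities $\alpha_1=\seq{f^{2r}(|B|)x_2,x_2}^{1/2}$, $\alpha_2=\seq{g^{2r}(|C^*|)x_2,x_2}^{1/2}$ against the two $\h_1$-quantities $\beta_1=\seq{g^{2r}(|B^*|)x_1,x_1}^{1/2}$, $\beta_2=\seq{f^{2r}(|C|)x_1,x_1}^{1/2}$. The sums of squares are then $\alpha_1^{2}+\alpha_2^{2}=\seq{\bra{f^{2r}(|B|)+g^{2r}(|C^*|)}x_2,x_2}$ and $\beta_1^{2}+\beta_2^{2}=\seq{\bra{f^{2r}(|C|)+g^{2r}(|B^*|)}x_1,x_1}$, which is exactly where the mixed combinations in the statement appear. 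The one thing to be careful about is that each such sum is formed from operators acting on a common space — $f^{2r}(|B|)$ and $g^{2r}(|C^*|)$ both live on $\h_2$, while $f^{2r}(|C|)$ and $g^{2r}(|B^*|)$ both live on $\h_1$ — so they are genuine positive operators, whose quadratic forms satisfy $\seq{Pz,z}\le\omega(P)\norm{z}^{2}$. Inserting this contributes a further factor $\norm{x_1}\norm{x_2}$, so that
\[
\abs{\seq{\T\x,\x}}^r\le 2^{r-1}\bra{\norm{x_1}\norm{x_2}}^{r}\,\omega^{1/2}\bra{f^{2r}(|B|)+g^{2r}(|C^*|)}\,\omega^{1/2}\bra{f^{2r}(|C|)+g^{2r}(|B^*|)}.
\]

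To finish, use $\norm{x_1}\norm{x_2}\le\tfrac12\bra{\norm{x_1}^{2}+\norm{x_2}^{2}}=\tfrac12$, whence $\bra{\norm{x_1}\norm{x_2}}^{r}\le\norm{x_1}\norm{x_2}\le\tfrac12$ because $r\ge1$; therefore $2^{r-1}\bra{\norm{x_1}\norm{x_2}}^{r}\le2^{r-2}$, and taking the supremum over unit vectors $\x$ yields the claimed inequality. I do not expect a serious conceptual obstacle here; the only real care is bookkeeping — correctly accumulating the powers of $\norm{x_1}$ and $\norm{x_2}$ through the H\"older--McCarthy step and the quadratic-form estimate, and arranging the Cauchy--Schwarz pairing so that the two operator sums close up on the correct summand of $\h_1\oplus\h_2$.
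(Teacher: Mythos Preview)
Your proposal is correct and follows essentially the same route as the paper: triangle inequality, Bohr's inequality, the mixed Schwarz inequality (Lemma~\ref{Lem2.3}(ii)), H\"older--McCarthy, the Cauchy--Schwarz pairing to form the two operator sums, the quadratic-form bound by the numerical radius, and finally the AM--GM estimate $\norm{x_1}\norm{x_2}\le\tfrac12$. Your explicit tracking of the factors $\norm{x_1}^{r-1}\norm{x_2}^{r-1}$ through the H\"older--McCarthy step is in fact more careful than the paper, which applies that lemma directly to the non-unit vectors $x_1,x_2$ without comment (implicitly using $\norm{x_i}\le1$); the end result is the same constant $2^{r-2}$.
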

\begin{proof} Let $\x=\begin{bmatrix} x_1 \\ x_2 \\\end{bmatrix}$ be any unit vector in $\h_1\oplus\h_2$, i.e., $\norm{x_1}^2+\norm{x_2}^2=1$.
Then
\begin{eqnarray*}
 &&\abs{\seq{\T\x,\x}}^{r}=\abs{\seq{Bx_2,x_1}+\seq{Cx_1,x_2}}^r\\
 &&\leq \sbra{\abs{\seq{Bx_2,x_1}}+\abs{\seq{Cx_1,x_2}}}^{r} \bra{\mbox{by the triangle inequality}}\\
  && \leq 2^{r-1}\sbra{\abs{\seq{Bx_2,x_1}}^r+\abs{\seq{Cx_1,x_2}}^r}\bra{\mbox{by Lemma \ref{Logain1}}}\\
  &&\leq 2^{r-1}\sbra{\seq{f^2(|B|)x_2,x_2}^{r/2}\seq{g^2(|B^*|)x_1,x_1}^{r/2}
  +\seq{f^2(|C|)x_1,x_1}^{r/2}\seq{g^2(|C^*|)x_2,x_2}^{r/2}}\\
  &&\bra{\mbox{by Lemma \ref{Lem2.3}}}\\
  &&\leq 2^{r-1}\sbra{\seq{f^{2r}(|B|)x_2,x_2}^{1/2}\seq{g^{2r}(|B^*|)x_1,x_1}^{1/2}
  +\seq{f^{2r}(|C|)x_1,x_1}^{1/2}\seq{g^{2r}(|C^*|)x_2,x_2}^{1/2}}\\
  &&\bra{\mbox{by Lemma \ref{Holder}}}
  \end{eqnarray*}
  \begin{eqnarray*}
    &&\leq 2^{r-1}\bra{\seq{f^{2r}(|B|)x_2,x_2}+\seq{g^{2r}(|C^*|)x_2,x_2}}^{1/2}\bra{\seq{g^{2r}(|B^*|)x_1,x_1}+\seq{f^{2r}(|C|)x_1,x_1}}^{1/2}\\
  &&\bra{\mbox{by the Cauchy-Schwarz inequality}}\\
  &&\leq 2^{r-1}\seq{\bra{f^{2r}(|B|)+g^{2r}(|C^*|)}x_2,x_2}^{1/2}\seq{\bra{g^{2r}(|B^*|)+f^{2r}(|C|)}x_1,x_1}^{1/2}\\
  &&\leq 2^{r-1}\omega^{\frac{1}{2}}\bra{f^{2r}(|B|)+g^{2r}(|C^*|)}\omega^{\frac{1}{2}}\bra{g^{2r}(|B^*|)+f^{2r}(|C|)}\norm{x_1}\norm{x_2}\\
  &&\leq
  2^{r-1}\omega^{\frac{1}{2}}\bra{f^{2r}(|B|)+g^{2r}(|C^*|)}\omega^{\frac{1}{2}}\bra{g^{2r}(|B^*|)+f^{2r}(|C|)}\bra{\frac{\norm{x_1}^2+\norm{x_2}^2}{2}}\\
  &&\bra{\mbox{by the arithmetic-geometric mean inequality}}\\
  &&=2^{r-2}\omega^{\frac{1}{2}}\bra{f^{2r}(|B|)+g^{2r}(|C^*|)}\omega^{\frac{1}{2}}\bra{g^{2r}(|B^*|)+f^{2r}(|C|)}.
\end{eqnarray*}
Now, taking the supremum over all unit vectors $\x\in\h_1\oplus\h_2$, we obtain the desired result.
\end{proof}
Special cases encompassed by Theorem \ref{theorem2.15} are as follows
\begin{corollary} Let $\T=\begin{bmatrix} 0 &B \\C& 0 \\\end{bmatrix}\in\L(\h_1\oplus\h_2)$. Then
 \begin{equation*}
   \omega^r(\T)\leq 2^{r-2}\omega^{\frac{1}{2}}\bra{|B|^{2\alpha r}+|C^*|^{2r(1-\alpha)}}
   \omega^{\frac{1}{2}}\bra{|C|^{2r\alpha}+|B^*|^{2r(1-\alpha)}}
 \end{equation*}
 for all $r\geq 1$ and $\alpha\in [0,1]$.
\end{corollary}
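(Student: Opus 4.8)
The plan is to obtain this corollary as an immediate specialization of Theorem \ref{theorem2.15}, with essentially no work beyond checking the hypotheses on $f$ and $g$. First I would set $f(t)=t^{\alpha}$ and $g(t)=t^{1-\alpha}$ for $t\geq 0$, with $\alpha\in[0,1]$ fixed. The restriction $\alpha\in[0,1]$ is precisely what makes both exponents $\alpha$ and $1-\alpha$ nonnegative, so $f$ and $g$ are nonnegative and continuous on $[0,\infty)$ (at the endpoints $\alpha=0$ or $\alpha=1$ one of the two becomes the constant function $1$, which is still nonnegative and continuous), and clearly $f(t)g(t)=t^{\alpha}\,t^{1-\alpha}=t$. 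Hence $(f,g)$ is an admissible pair for Lemma \ref{Lem2.3}, and therefore for Theorem \ref{theorem2.15}.

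Next I would evaluate the functional-calculus expressions appearing in the conclusion of Theorem \ref{theorem2.15}. For any positive operator $X\in\bh$ and any $r\geq 1$,
\[
f^{2r}(X)=\bra{X^{\alpha}}^{2r}=X^{2r\alpha},\qquad g^{2r}(X)=\bra{X^{1-\alpha}}^{2r}=X^{2r(1-\alpha)}.
\]
Applying this with $X=|B|,\ |C^{*}|,\ |C|,\ |B^{*}|$ (all positive operators), the right-hand side of the bound in Theorem \ref{theorem2.15},
\[
2^{r-2}\,\omega^{\frac{1}{2}}\bra{f^{2r}(|B|)+g^{2r}(|C^{*}|)}\,\omega^{\frac{1}{2}}\bra{f^{2r}(|C|)+g^{2r}(|B^{*}|)},
\]
becomes precisely
\[
2^{r-2}\,\omega^{\frac{1}{2}}\bra{|B|^{2\alpha r}+|C^{*}|^{2r(1-\alpha)}}\,\omega^{\frac{1}{2}}\bra{|C|^{2r\alpha}+|B^{*}|^{2r(1-\alpha)}},
\]
which is exactly the claimed inequality, valid for all $r\geq 1$.

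Since the argument is a pure substitution into an already-established theorem, there is no genuine obstacle; the only point that needs a moment's care is the admissibility of $f,g$ at the boundary values $\alpha\in\{0,1\}$, and even there the chosen functions remain nonnegative, continuous, and satisfy $f(t)g(t)=t$, so Theorem \ref{theorem2.15} applies verbatim. I would therefore expect this corollary to carry only a one- or two-line proof.
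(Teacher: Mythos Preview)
Your proposal is correct and matches the paper's own proof, which simply states that the result follows immediately from Theorem~\ref{theorem2.15} by taking $f(t)=t^{\alpha}$ and $g(t)=t^{1-\alpha}$. Your added remarks on the boundary cases $\alpha\in\{0,1\}$ and the functional-calculus identities are valid and slightly more detailed than what the paper records, but the argument is the same one-line specialization.
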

\begin{proof} The result follows immediately from Theorem \ref{theorem2.15} for $f(t)=t^{\alpha}$ and
$g(t)=t^{1-\alpha}$.
\end{proof}
\begin{lemma}\label{Mix-al-be} Let $a,b,e\in\h$ with $\norm{e}=1$, and let $\alpha\in\c\setminus\{0\}$ and $\beta\geq 0$.
Then
\begin{eqnarray}\label{Mixed-Buz}
  \abs{\seq{a,e}\seq{e,b}}^2&\leq& \frac{\beta+(\beta+1)\max\set{1,\abs{\alpha-1}^2}}{\abs{\alpha}^2(1+\beta)}\norm{a}^2\norm{b}^2\nonumber \\
  &+&\frac{1+2(\beta+1)\max\set{1,\abs{\alpha-1}}}{\abs{\alpha}^2(1+\beta)}\norm{a}\norm{b}\abs{\seq{a,b}}.
\end{eqnarray}
\end{lemma}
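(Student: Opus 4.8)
The plan is to combine the generalized Buzano inequality (\ref{Gbuz}) of Khosravi et al.\ with a convexity/averaging trick in the free parameter $\beta$. First I would apply (\ref{Gbuz}) directly to $a,b,e$ to obtain
\begin{equation*}
  \abs{\seq{a,e}\seq{e,b}}\leq \frac{1}{\abs{\alpha}}\bra{\max\set{1,\abs{\alpha-1}}\norm{a}\norm{b}+\abs{\seq{a,b}}},
\end{equation*}
using $\norm{e}=1$. Squaring this gives a bound of the shape $\frac{1}{\abs{\alpha}^2}\bra{M^2\norm{a}^2\norm{b}^2 + 2M\norm{a}\norm{b}\abs{\seq{a,b}} + \abs{\seq{a,b}}^2}$, where $M=\max\set{1,\abs{\alpha-1}}$. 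The term $\abs{\seq{a,b}}^2$ is awkward because the target inequality is affine in $\abs{\seq{a,b}}$, so the key step is to dispose of it.

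The device for removing $\abs{\seq{a,b}}^2$ is to split it as $\abs{\seq{a,b}}^2 = \abs{\seq{a,b}}\cdot\abs{\seq{a,b}}$ and bound one factor by Cauchy--Schwarz (\ref{CS1}), $\abs{\seq{a,b}}\leq\norm{a}\norm{b}$, giving $\abs{\seq{a,b}}^2 \leq \norm{a}\norm{b}\abs{\seq{a,b}}$; but doing this alone yields only the $\beta=0$ case. To get the full one-parameter family, I would instead interpolate: for $\beta\geq 0$ write
\begin{equation*}
  \abs{\seq{a,b}}^2 = \frac{1}{1+\beta}\abs{\seq{a,b}}^2 + \frac{\beta}{1+\beta}\abs{\seq{a,b}}^2,
\end{equation*}
bound the second summand's $\abs{\seq{a,b}}^2$ by $\norm{a}^2\norm{b}^2$ via (\ref{CS1}) and the first summand's spare factor of $\abs{\seq{a,b}}$ by $\norm{a}\norm{b}$, obtaining
\begin{equation*}
  \abs{\seq{a,b}}^2 \leq \frac{1}{1+\beta}\norm{a}\norm{b}\abs{\seq{a,b}} + \frac{\beta}{1+\beta}\norm{a}^2\norm{b}^2.
\end{equation*}
Substituting this into the squared form of (\ref{Gbuz}) and collecting the coefficient of $\norm{a}^2\norm{b}^2$ as $\frac{1}{\abs{\alpha}^2}\bra{M^2 + \frac{\beta}{1+\beta}}$ and the coefficient of $\norm{a}\norm{b}\abs{\seq{a,b}}$ as $\frac{1}{\abs{\alpha}^2}\bra{2M + \frac{1}{1+\beta}}$, then writing both over the common denominator $\abs{\alpha}^2(1+\beta)$, should produce exactly $\frac{\beta + (\beta+1)M^2}{\abs{\alpha}^2(1+\beta)}$ and $\frac{1 + 2(\beta+1)M}{\abs{\alpha}^2(1+\beta)}$, matching the claimed right-hand side of (\ref{Mixed-Buz}).

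I expect the main obstacle to be purely bookkeeping: making sure the algebraic recombination of the three terms coming from $(M\norm{a}\norm{b}+\abs{\seq{a,b}})^2$ together with the interpolated estimate for $\abs{\seq{a,b}}^2$ lands on the stated coefficients, particularly tracking the factor $1+\beta$ and the square $\max\set{1,\abs{\alpha-1}}^2$ in the first coefficient versus the unsquared $\max\set{1,\abs{\alpha-1}}$ in the second. No deep idea is needed beyond (\ref{Gbuz}), (\ref{CS1}), and the convex split in $\beta$; one only needs $\alpha\neq 0$ (so $\abs{\alpha}^2>0$) and $\beta\geq 0$ (so $1+\beta>0$ and the split has non-negative weights), both of which are the stated hypotheses.
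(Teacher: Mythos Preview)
Your proposal is correct and follows essentially the same route as the paper: square the generalized Buzano inequality (\ref{Gbuz}), then replace the resulting $\abs{\seq{a,b}}^2$ term by the $\beta$-dependent estimate $\abs{\seq{a,b}}^2\le\frac{\beta}{1+\beta}\norm{a}^2\norm{b}^2+\frac{1}{1+\beta}\norm{a}\norm{b}\abs{\seq{a,b}}$ (your convex split is exactly the paper's inequality (\ref{AQ1}), just derived by a slightly different rearrangement), and collect coefficients. The only cosmetic difference is that the paper obtains (\ref{AQ1}) by adding the nonnegative quantity $\beta(\norm{a}^2\norm{b}^2-\abs{\seq{a,b}}^2)$ to $\abs{\seq{a,b}}\norm{a}\norm{b}$ and solving, whereas you split $\abs{\seq{a,b}}^2$ directly---the two are algebraically equivalent.
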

\begin{proof} We have
\begin{eqnarray*}
  \abs{\seq{a,b}}^2 &\leq&\abs{\seq{a,b}}\norm{a}\norm{b} \\
   &\leq&\abs{\seq{a,b}}\norm{a}\norm{b}+\beta\bra{\norm{a}^2\norm{b}^2-\abs{\seq{a,b}}^2}.
\end{eqnarray*}
Consequently,
\begin{equation}\label{AQ1}
  \abs{\seq{a,b}}^2 \leq \frac{\beta}{\beta+1}\norm{a}^2\norm{b}^2+\frac{1}{\beta+1}\abs{\seq{a,b}}\norm{a}\norm{b}.
\end{equation}
Utilizing the inequality (\ref{Gbuz}), we obtain
\begin{eqnarray}\label{AQ2}
  \abs{\seq{a,e}\seq{e,b}}^2 &\leq&\frac{\max\set{1,\abs{\alpha-1}^2}}{\abs{\alpha}^2}\norm{a}^2\norm{b}^2+\frac{1}{\abs{\alpha}^2}\abs{\seq{a,b}}^2\nonumber \\
   &+&\frac{2\max\set{1,\abs{\alpha-1}}}{\abs{\alpha}^2} \abs{\seq{a,b}}\norm{a}\norm{b}.
\end{eqnarray}
Combining the inequalities (\ref{AQ1}) and (\ref{AQ2}), we have
\begin{eqnarray*}
  \abs{\seq{a,e}\seq{e,b}}^2 &\leq&\frac{\beta+(\beta+1)\max\set{1,\abs{\alpha-1}^2}}{\abs{\alpha}^2(1+\beta)}\norm{a}^2\norm{b}^2 \\
   &+& \frac{1+2(\beta+1)\max\set{1,\abs{\alpha-1}}}{\abs{\alpha}^2(1+\beta)}\norm{a}\norm{b}\abs{\seq{a,b}}.
\end{eqnarray*}
\end{proof}
\begin{lemma}\label{Buzano} If $a,b,e\in\h$ with $\norm{e}=1$ and $\beta\geq 0$, then
\begin{equation}\label{Imediae1}
  \abs{\seq{a,e}\seq{e,b}}^2\leq
  \frac{1}{4}\bra{\frac{2\beta+1}{\beta+1}\norm{a}^2\norm{b}^2+\frac{2\beta+3}{\beta+1}\norm{a}\norm{b}\abs{\seq{a,b}}}.
\end{equation}
\end{lemma}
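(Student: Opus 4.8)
The plan is to recognize that Lemma~\ref{Buzano} is nothing but the special case $\alpha=2$ of Lemma~\ref{Mix-al-be}, so the whole argument reduces to substituting $\alpha=2$ into the inequality (\ref{Mixed-Buz}) and simplifying the $\beta$-dependent constants. First I would observe that for $\alpha=2$ one has $|\alpha-1|=1$, hence $\max\set{1,\abs{\alpha-1}^2}=1$ and $\max\set{1,\abs{\alpha-1}}=1$, while $\abs{\alpha}^2=4$. Feeding these into (\ref{Mixed-Buz}), the coefficient of $\norm{a}^2\norm{b}^2$ becomes
\begin{equation*}
  \frac{\beta+(\beta+1)}{4(1+\beta)}=\frac{2\beta+1}{4(\beta+1)}=\frac14\cdot\frac{2\beta+1}{\beta+1},
\end{equation*}
and the coefficient of $\norm{a}\norm{b}\abs{\seq{a,b}}$ becomes
\begin{equation*}
  \frac{1+2(\beta+1)}{4(1+\beta)}=\frac{2\beta+3}{4(\beta+1)}=\frac14\cdot\frac{2\beta+3}{\beta+1},
\end{equation*}
which together give exactly (\ref{Imediae1}).

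If one prefers a self-contained argument instead of invoking Lemma~\ref{Mix-al-be}, the same proof can be run directly: start from the Buzano inequality (\ref{Buz-A1}) (which is itself the $\alpha=2$ instance of (\ref{Gbuz})), apply it to $x=a$, $y=b$, $z=e$ to control $\abs{\seq{a,e}\seq{e,b}}$ in terms of $\norm{a}^2\norm{b}^2$ and $\abs{\seq{a,b}}^2$, then use the elementary estimate (\ref{AQ1}) to trade the $\abs{\seq{a,b}}^2$ term for a convex combination of $\norm{a}^2\norm{b}^2$ and $\abs{\seq{a,b}}\norm{a}\norm{b}$; collecting terms reproduces the claimed constants. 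This is precisely the computation carried out in the proof of Lemma~\ref{Mix-al-be} with $\alpha$ set to $2$ from the outset.

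I do not expect any genuine obstacle here; the statement is a routine specialization. The only points requiring a moment's care are the evaluation $\max\set{1,\abs{\alpha-1}}=1$ at $\alpha=2$ and the bookkeeping of the fractions with denominator $4(\beta+1)$, both of which are immediate.
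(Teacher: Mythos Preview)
Your proposal is correct and matches the paper's own proof exactly: the paper simply writes ``Letting $\alpha=2$ in Lemma~\ref{Mix-al-be},'' and your substitution $|\alpha-1|=1$, $|\alpha|^2=4$ with the resulting simplification of the two $\beta$-dependent coefficients is precisely the computation that justifies this.
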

\begin{proof} Letting $\alpha=2$ in Lemma \ref{Mix-al-be}.
\end{proof}
\begin{lemma}\label{Ramadan-Kareem1}Let $a,b,e\in\h$ with $\norm{e}=1$, and let $\alpha\in\c\setminus\{0\}$ and $\beta\geq 0$.
Then
 $$\abs{\seq{a,e}\seq{e,b}}^2\leq \frac{2(\beta+1)\max\set{1,\abs{\alpha-1}^2}+2\beta}{\abs{\alpha}^2(\beta+1)}\norm{a}^2\norm{b}^2+\frac{2}{\abs{\alpha}^2(\beta+1)}\abs{\seq{a,b}}^2.$$
\end{lemma}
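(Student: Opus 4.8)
The plan is to obtain the estimate directly from the generalized Buzano inequality (\ref{Gbuz}) combined with two elementary inequalities, so that the parameter $\beta$ enters only through a redistribution of coefficients. Since $\norm{e}=1$, inequality (\ref{Gbuz}) reads $\abs{\seq{a,e}\seq{e,b}}\leq \frac{1}{\abs{\alpha}}\bra{\max\set{1,\abs{\alpha-1}}\norm{a}\norm{b}+\abs{\seq{a,b}}}$. First I would square both sides and apply the elementary inequality $(p+q)^2\leq 2p^2+2q^2$ with $p=\max\set{1,\abs{\alpha-1}}\norm{a}\norm{b}$ and $q=\abs{\seq{a,b}}$, using $\max\set{1,\abs{\alpha-1}}^2=\max\set{1,\abs{\alpha-1}^2}$, to arrive at
\begin{equation*}
  \abs{\seq{a,e}\seq{e,b}}^2\leq \frac{2\max\set{1,\abs{\alpha-1}^2}}{\abs{\alpha}^2}\norm{a}^2\norm{b}^2+\frac{2}{\abs{\alpha}^2}\abs{\seq{a,b}}^2.
\end{equation*}

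Next I would split the coefficient of the last term according to $\frac{2}{\abs{\alpha}^2}=\frac{2}{\abs{\alpha}^2(\beta+1)}+\frac{2\beta}{\abs{\alpha}^2(\beta+1)}$, retain the first piece unchanged, and bound $\abs{\seq{a,b}}^2$ in the second piece by $\norm{a}^2\norm{b}^2$ using the Cauchy--Schwarz inequality (\ref{CS1}). Collecting the two contributions to $\norm{a}^2\norm{b}^2$ then produces the coefficient $\frac{2(\beta+1)\max\set{1,\abs{\alpha-1}^2}+2\beta}{\abs{\alpha}^2(\beta+1)}$, while the remaining $\abs{\seq{a,b}}^2$ carries the coefficient $\frac{2}{\abs{\alpha}^2(\beta+1)}$, which is precisely the asserted inequality; taking the argument in this order keeps every step a one-line manipulation.

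There is essentially no obstacle here: the only thing to notice is that the $\beta$-dependence is manufactured purely by diverting a $\frac{\beta}{\beta+1}$-fraction of the coefficient of $\abs{\seq{a,b}}^2$ over to $\norm{a}^2\norm{b}^2$ via Cauchy--Schwarz. One could alternatively start from the intermediate estimate (\ref{AQ2}) appearing inside the proof of Lemma \ref{Mix-al-be} and bound its cross term $\tfrac{2\max\set{1,\abs{\alpha-1}}}{\abs{\alpha}^2}\abs{\seq{a,b}}\norm{a}\norm{b}$ by $\tfrac{\max\set{1,\abs{\alpha-1}}}{\abs{\alpha}^2}\bra{\abs{\seq{a,b}}^2+\norm{a}^2\norm{b}^2}$, but this yields $\max\set{1,\abs{\alpha-1}}$ in place of $\max\set{1,\abs{\alpha-1}^2}$ and so does not reproduce the stated form; hence I would prefer the direct route from (\ref{Gbuz}).
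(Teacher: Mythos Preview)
Your argument is correct and is essentially the paper's own proof: the paper also squares (\ref{Gbuz}) via Bohr's inequality (your $(p+q)^2\le 2p^2+2q^2$) to obtain the same intermediate bound, and then redistributes the coefficient of $\abs{\seq{a,b}}^2$ using Cauchy--Schwarz, which it records as the auxiliary inequality $\abs{\seq{a,b}}^2\le \frac{\beta}{\beta+1}\norm{a}^2\norm{b}^2+\frac{1}{\beta+1}\abs{\seq{a,b}}^2$. The only difference is cosmetic packaging.
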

\begin{proof} By similar discussion with Lemma \ref{Mix-al-be}, we have
$$\abs{\seq{a,b}}^2\leq \abs{\seq{a,b}}^2+\beta\bra{\norm{a}^2\norm{b}^2-\abs{\seq{a,b}}^2}.$$
This indicates that
\begin{equation}\label{NM1}
  \abs{\seq{a,b}}^2\leq \frac{\beta}{\beta+1}\norm{a}^2\norm{b}^2+\frac{1}{\beta+1}\abs{\seq{a,b}}^2.
\end{equation}
By applying Bohr's inequality on the inequality (\ref{Gbuz}), we have
\begin{equation}\label{NM2}
 \abs{\seq{a,e}\seq{e,b}}^2\leq \frac{2\max\set{1,\abs{\alpha-1}^2}}{\abs{\alpha}^2}\norm{a}^2\norm{b}^2+\frac{2}{\abs{\alpha}^2}\abs{\seq{a,b}}^2.
\end{equation}
Then, according to the inequalities (\ref{NM1}) and (\ref{NM2}), one has
\begin{eqnarray*}
  \abs{\seq{a,e}\seq{e,b}}^2 &\leq& \frac{2\max\set{1,\abs{\alpha-1}^2}}{\abs{\alpha}^2}\norm{a}^2\norm{b}^2+\frac{2}{\abs{\alpha}^2}\abs{\seq{a,b}}^2\\
   &\leq&\frac{2\max\set{1,\abs{\alpha-1}^2}}{\abs{\alpha}^2}\norm{a}^2\norm{b}^2
   +\frac{2}{\abs{\alpha}^2}\bra{\frac{\beta}{\beta+1}\norm{a}^2\norm{b}^2+\frac{1}{\beta+1}\abs{\seq{a,b}}^2}\\
   &=& \frac{2(\beta+1)\max\set{1,\abs{\alpha-1}^2}+2\beta}{\abs{\alpha}^2(\beta+1)}\norm{a}^2\norm{b}^2+\frac{2}{\abs{\alpha}^2(\beta+1)}\abs{\seq{a,b}}^2.
\end{eqnarray*}
\end{proof}
\begin{lemma}If $a,b,e\in\h$ with $\norm{e}=1$ and $\beta\geq 0$, then
  \begin{equation}\label{Ramad13}
    \abs{\seq{a,e}\seq{e,b}}^2\leq\frac{1}{2}\bra{\frac{2\beta+1}{\beta+1}\norm{a}^2\norm{b}^2
    +\frac{1}{\beta+1}\abs{\seq{a,b}}^2}.
  \end{equation}
\end{lemma}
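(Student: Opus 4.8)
The plan is to obtain (\ref{Ramad13}) as the special case $\alpha = 2$ of Lemma \ref{Ramadan-Kareem1}; no new idea is needed beyond bookkeeping. Setting $\alpha=2$ gives $\abs{\alpha-1}^2 = 1$, hence $\max\set{1,\abs{\alpha-1}^2}=1$, and $\abs{\alpha}^2 = 4$. Substituting into the conclusion of Lemma \ref{Ramadan-Kareem1}, the coefficient of $\norm{a}^2\norm{b}^2$ collapses to $\frac{2(\beta+1)+2\beta}{4(\beta+1)} = \frac{4\beta+2}{4(\beta+1)} = \frac{2\beta+1}{2(\beta+1)}$, while the coefficient of $\abs{\seq{a,b}}^2$ becomes $\frac{2}{4(\beta+1)} = \frac{1}{2(\beta+1)}$. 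Pulling out the common factor $\frac12$ then reproduces (\ref{Ramad13}) exactly.

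For completeness I would also sketch a self-contained derivation that mirrors the proof of Lemma \ref{Ramadan-Kareem1}. Start from Buzano's inequality (\ref{Buz-A1}), which is (\ref{Gbuz}) at $\alpha=2$, i.e. $\abs{\seq{a,e}\seq{e,b}} \leq \frac12\bra{\norm{a}\norm{b}+\abs{\seq{a,b}}}$. Squaring and applying Bohr's inequality (Lemma \ref{Logain1} with $n=2$, $r=2$) gives $\abs{\seq{a,e}\seq{e,b}}^2 \leq \frac12\bra{\norm{a}^2\norm{b}^2 + \abs{\seq{a,b}}^2}$. Then I would insert the elementary refinement $\abs{\seq{a,b}}^2 \leq \frac{\beta}{\beta+1}\norm{a}^2\norm{b}^2 + \frac{1}{\beta+1}\abs{\seq{a,b}}^2$ (the inequality (\ref{NM1})) into \emph{one} copy of $\abs{\seq{a,b}}^2$ — replacing $\frac12\abs{\seq{a,b}}^2$ by $\frac{\beta}{2(\beta+1)}\norm{a}^2\norm{b}^2 + \frac{1}{2(\beta+1)}\abs{\seq{a,b}}^2$ — and collect the $\norm{a}^2\norm{b}^2$ terms using $\frac12+\frac{\beta}{2(\beta+1)} = \frac{2\beta+1}{2(\beta+1)}$.

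There is essentially no obstacle here: both routes are routine. The only thing requiring a little care is the arithmetic identity $\frac12+\frac{\beta}{2(\beta+1)} = \frac{2\beta+1}{2(\beta+1)}$, and making sure the refinement (\ref{NM1}) is applied to the correct term so that no superfluous slack in $\abs{\seq{a,b}}^2$ is introduced. I would present the one-line argument ``Letting $\alpha = 2$ in Lemma \ref{Ramadan-Kareem1}'' as the official proof, since it is the cleanest.
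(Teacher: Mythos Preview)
Your proposal is correct and matches the paper's own proof exactly: the paper simply writes ``Letting $\alpha=2$ in Lemma \ref{Ramadan-Kareem1},'' which is precisely the one-line argument you identify as the official proof, and your bookkeeping of the coefficients is accurate.
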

\begin{proof} Letting $\alpha=2$ in Lemma \ref{Ramadan-Kareem1}.
\end{proof}
\begin{theorem}\label{MOBY-A1} Let $B,C\in\bh$, $\alpha\in\c\setminus\{0\}$ and $\beta\geq 0$. Then
\begin{eqnarray}\label{Ram-K1}
  \omega^4\bra{\begin{bmatrix} O& B\\ C& O \\\end{bmatrix}}&\leq&\frac{\delta_1}{4}\max\set{\norm{|C|^2+|B^*|^2}^2,\norm{|B|^2+|C^*|^2}^2}\nonumber\\
  &+&\delta_2\max\set{\omega^2\bra{BC},\omega^2(CB)},
\end{eqnarray}
 where $\delta_1=\frac{2(\beta+1)\max\set{1,\abs{\alpha-1}^2}+2\beta}{\abs{\alpha}^2(\beta+1)}$ and $\delta_2=\frac{2}{\abs{\alpha}^2(\beta+1)}$.
\end{theorem}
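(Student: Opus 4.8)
The plan is to reduce the statement to a single--operator inequality and then specialize to the block matrix. Precisely, I would first prove that for every $A\in\bh$,
\begin{equation*}
  \omega^{4}(A)\leq \frac{\delta_1}{4}\norm{|A|^{2}+|A^{*}|^{2}}^{2}+\delta_2\,\omega^{2}(A^{2}),
\end{equation*}
with $\delta_1,\delta_2$ exactly as in the statement, and then apply this with $A=\T=\begin{bmatrix} O & B\\ C & O\end{bmatrix}$.

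For the single--operator inequality, fix a unit vector $x\in\h$ and put $a=Ax$, $b=A^{*}x$, $e=x$ in Lemma \ref{Ramadan-Kareem1}. Since $\seq{x,A^{*}x}=\overline{\seq{A^{*}x,x}}=\overline{\,\overline{\seq{Ax,x}}\,}=\seq{Ax,x}$, we get $\seq{a,e}\seq{e,b}=\seq{Ax,x}^{2}$, so the left-hand side of the inequality in Lemma \ref{Ramadan-Kareem1} becomes $\abs{\seq{Ax,x}}^{4}$. Moreover $\norm{a}^{2}=\seq{|A|^{2}x,x}$, $\norm{b}^{2}=\seq{|A^{*}|^{2}x,x}$ and $\seq{a,b}=\seq{Ax,A^{*}x}=\seq{A^{2}x,x}$. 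Hence Lemma \ref{Ramadan-Kareem1} gives
\begin{equation*}
  \abs{\seq{Ax,x}}^{4}\leq \delta_1\seq{|A|^{2}x,x}\seq{|A^{*}|^{2}x,x}+\delta_2\abs{\seq{A^{2}x,x}}^{2}.
\end{equation*}
By the arithmetic--geometric mean inequality, $\seq{|A|^{2}x,x}\seq{|A^{*}|^{2}x,x}\leq\frac14\seq{(|A|^{2}+|A^{*}|^{2})x,x}^{2}\leq\frac14\norm{|A|^{2}+|A^{*}|^{2}}^{2}$, the last step because $|A|^{2}+|A^{*}|^{2}\geq 0$, so $\seq{(|A|^{2}+|A^{*}|^{2})x,x}\leq\omega(|A|^{2}+|A^{*}|^{2})=\norm{|A|^{2}+|A^{*}|^{2}}$; also $\abs{\seq{A^{2}x,x}}^{2}\leq\omega^{2}(A^{2})$. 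Taking the supremum over all unit vectors $x$ yields the single--operator inequality.

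It remains to substitute $A=\T$. A direct multiplication gives $|\T|^{2}=\T^{*}\T=\begin{bmatrix} |C|^{2} & O\\ O & |B|^{2}\end{bmatrix}$ and $|\T^{*}|^{2}=\T\T^{*}=\begin{bmatrix} |B^{*}|^{2} & O\\ O & |C^{*}|^{2}\end{bmatrix}$, whence $|\T|^{2}+|\T^{*}|^{2}=\begin{bmatrix} |C|^{2}+|B^{*}|^{2} & O\\ O & |B|^{2}+|C^{*}|^{2}\end{bmatrix}$ and therefore $\norm{|\T|^{2}+|\T^{*}|^{2}}=\max\set{\norm{|C|^{2}+|B^{*}|^{2}},\norm{|B|^{2}+|C^{*}|^{2}}}$. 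Likewise $\T^{2}=\begin{bmatrix} BC & O\\ O & CB\end{bmatrix}$, so Lemma \ref{lemma5.1}(a) gives $\omega(\T^{2})=\max\set{\omega(BC),\omega(CB)}$. Plugging these two identities into the single--operator inequality and using $(\max\set{u,v})^{2}=\max\set{u^{2},v^{2}}$ for $u,v\geq 0$ produces exactly (\ref{Ram-K1}). I do not foresee a genuine obstacle: the only points needing care are the identity $\seq{Ax,A^{*}x}=\seq{A^{2}x,x}$ (which converts the cross term into $\omega(A^{2})$), the passage from $\seq{\cdot\,x,x}$ to the operator norm for the positive operator $|A|^{2}+|A^{*}|^{2}$, and the elementary block computations for $|\T|^{2}+|\T^{*}|^{2}$ and $\T^{2}$.
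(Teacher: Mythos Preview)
Your proposal is correct and follows essentially the same route as the paper: apply Lemma \ref{Ramadan-Kareem1} with $a=A\mathbf{x}$, $b=A^{*}\mathbf{x}$, $e=\mathbf{x}$, use the arithmetic--geometric mean to pass to $\tfrac14\norm{|A|^{2}+|A^{*}|^{2}}^{2}$, and then read off the block structure of $|\T|^{2}+|\T^{*}|^{2}$ and $\T^{2}$ via Lemma \ref{lemma5.1}. The only cosmetic difference is that you isolate the single--operator inequality first and then specialize to $\T$, whereas the paper works with $\T$ from the outset.
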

\begin{proof}Let $\x$ be any unit vector in $\hh$, $\delta_1=\frac{2(\beta+1)\max\set{1,\abs{\alpha-1}^2}+2\beta}{\abs{\alpha}^2(\beta+1)}$ and $\delta_2=\frac{2}{\abs{\alpha}^2(\beta+1)}$,  and let $\T=\begin{bmatrix} O& B\\ C& O \\\end{bmatrix}$. Then
  \begin{eqnarray*}
    \abs{\seq{\T\x,\x}}^4 &\leq&\delta_1\norm{\T\x}^2\norm{\T^*\x}^2+\delta_2\abs{\seq{\T\x,\T^*\x}}^2 \bra{\mbox{by Lemma \ref{Ramadan-Kareem1}}}\\
     &=&\delta_1\bra{\sqrt{\seq{\abs{\T}^2\x,\x}\seq{\abs{\T^*}^2\x,\x}}}^2+\delta_2\abs{\seq{\T^2\x,\x}}^2\\
     &\leq& \frac{\delta_1}{4}\bra{\seq{\abs{\T}^2\x,\x}+\seq{\abs{\T^*}^2\x,\x}}^2+\delta_2\abs{\seq{\T^2\x,\x}}^2\\
     &&\bra{\mbox{by the arithmetic-geometric mean inequality}}
      \end{eqnarray*}
      \begin{eqnarray*}
     &\leq& \frac{\delta_1}{4}\norm{|\T|^2+|\T^*|^2}^2+\delta_2\omega^2\bra{\T^2}\\
     &\leq& \frac{\delta_1}{4}\max\set{\norm{|C|^2+|B^*|^2}^2,\norm{|B|^2+|C^*|^2}^2}+\delta_2\max\set{\omega^2\bra{BC},\omega^2(CB)}.
  \end{eqnarray*}
  By taking the supremum over all vectors of $\x\in\hh$ and using Lemma \ref{lemma5.1}, we obtain
  $$\omega^4(\T)\leq \frac{\delta_1}{4}\max\set{\norm{|C|^2+|B^*|^2}^2,\norm{|B|^2+|C^*|^2}^2}+\delta_2\max\set{\omega^2\bra{BC},\omega^2(CB)}.$$
\end{proof}
Letting $B=C=M$ in Theorem \ref{MOBY-A1} and using Lemma \ref{lemma5.1}, we have
\begin{corollary}\label{MOBY-A2} Let $M\in\bh$. Then
$$\omega^4(M)\leq \frac{\delta_1}{4}\norm{|M|^2+|M^*|^2}^2+\delta_2\omega^2(M^2).$$
\end{corollary}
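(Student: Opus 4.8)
The plan is to obtain this statement as a direct specialization of Theorem~\ref{MOBY-A1}. Concretely, I would apply the inequality \eqref{Ram-K1} with the choice $B=C=M$, keeping $\alpha\in\c\setminus\{0\}$ and $\beta\geq 0$ (hence $\delta_1$ and $\delta_2$) exactly as in the statement of that theorem, and then simplify both sides.

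First I would deal with the left-hand side. With $B=C=M$ the operator matrix in \eqref{Ram-K1} becomes $\begin{bmatrix} O & M \\ M & O \end{bmatrix}$, and by the ``in particular'' clause of Lemma~\ref{lemma5.1}(c) we have $\omega\bra{\begin{bmatrix} O & M \\ M & O \end{bmatrix}}=\omega(M)$. Raising this identity to the fourth power gives $\omega^4\bra{\begin{bmatrix} O & M \\ M & O \end{bmatrix}}=\omega^4(M)$, which is precisely the quantity we wish to bound.

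Next I would simplify the right-hand side. When $B=C=M$, the two entries of the first maximum coincide: $\norm{|C|^2+|B^*|^2}=\norm{|M|^2+|M^*|^2}=\norm{|B|^2+|C^*|^2}$, so $\max\set{\norm{|C|^2+|B^*|^2}^2,\norm{|B|^2+|C^*|^2}^2}=\norm{|M|^2+|M^*|^2}^2$. Likewise $BC=CB=M^2$, so $\max\set{\omega^2(BC),\omega^2(CB)}=\omega^2(M^2)$. Substituting these simplifications into \eqref{Ram-K1} yields
\begin{equation*}
  \omega^4(M)\leq \frac{\delta_1}{4}\norm{|M|^2+|M^*|^2}^2+\delta_2\,\omega^2(M^2),
\end{equation*}
which is the asserted inequality.

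I do not anticipate any genuine obstacle here: the argument is purely a matter of substitution and bookkeeping. The only point that warrants a moment's care is invoking the correct part of Lemma~\ref{lemma5.1} — namely the special case $\omega\bra{\begin{bmatrix} O & S \\ S & O \end{bmatrix}}=\omega(S)$ in part (c) — to collapse the block-matrix numerical radius back to $\omega(M)$; everything else is immediate from the coincidence $B=C=M$.
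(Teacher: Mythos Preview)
Your proposal is correct and follows essentially the same approach as the paper: the paper's proof consists solely of the remark ``Letting $B=C=M$ in Theorem~\ref{MOBY-A1} and using Lemma~\ref{lemma5.1}, we have [the corollary].'' Your write-up simply makes explicit the substitutions and the invocation of the special case in Lemma~\ref{lemma5.1}(c).
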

\begin{remark} Corollary \ref{MOBY-A2} with $\alpha=2$ is sharper than the second inequality in the inequality (\ref{Ineq1.3}).
As a matter of fact,
\begin{eqnarray*}
  &&\frac{2\beta+1}{8(\beta+1)}\norm{|M|^2+|M^*|^2}^2+\frac{1}{2(\beta+1)}\omega^2(M^2) \\
  &\leq& \frac{2\beta+1}{8(\beta+1)}\norm{|M|^2+|M^*|^2}^2+\frac{1}{2(\beta+1)}\omega^4(M)  \\
   &\leq& \frac{2\beta+1}{8(\beta+1)}\norm{|M|^2+|M^*|^2}^2+\frac{1}{8(\beta+1)}\norm{|M|^2+|M^*|^2}^2\\
   &=& \frac{1}{4}\norm{|M|^2+|M^*|^2}^2.
\end{eqnarray*}
\end{remark}
\begin{theorem}\label{Ramadan1} Let $B,C\in\bh$. Then
\begin{eqnarray*}
  \omega^{4r}\bra{\begin{bmatrix} O& B\\ C& O \\\end{bmatrix}}&\leq& \frac{\gamma_1}{16}\max\set{\norm{ |C|^{2r}+|B^*|^{2r}}^2,\norm{|B|^{2r} +|C^*|^{2r}}^2}\\
   &+&\frac{\gamma_2}{8}\max\set{\norm{ |C|^{2r}+|B^*|^{2r}},\norm{|B|^{2r} +|C^*|^{2r}}}\max\set{\omega^r(BC),\omega^r(CB)}
\end{eqnarray*}
for all $r\geq 1$, where $\gamma_1=\frac{2\beta+1}{\beta+1}$ and $\gamma_2=\frac{2\beta+3}{\beta+1}$.
\end{theorem}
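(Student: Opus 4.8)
The plan is to reduce the statement to a single--operator inequality and then read off the block structure. Writing $\T=\begin{bmatrix}O&B\\ C&O\end{bmatrix}$, one computes $\T^2=\begin{bmatrix}BC&O\\ O&CB\end{bmatrix}$ and $|\T|^{2r}+|\T^*|^{2r}=\begin{bmatrix}|C|^{2r}+|B^*|^{2r}&O\\ O&|B|^{2r}+|C^*|^{2r}\end{bmatrix}$; hence by Lemma \ref{lemma5.1}(a) and the fact that the norm of a block--diagonal operator is the maximum of the norms of its blocks, $\norm{|\T|^{2r}+|\T^*|^{2r}}=\max\set{\norm{|C|^{2r}+|B^*|^{2r}},\norm{|B|^{2r}+|C^*|^{2r}}}$ and $\omega^r(\T^2)=\max\set{\omega^r(BC),\omega^r(CB)}$. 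So it suffices to prove that for every $\T\in\bh$ and $r\geq 1$,
\[
\omega^{4r}(\T)\leq\frac{\gamma_1}{16}\norm{|\T|^{2r}+|\T^*|^{2r}}^2+\frac{\gamma_2}{8}\norm{|\T|^{2r}+|\T^*|^{2r}}\,\omega^{r}(\T^2).
\]

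To prove this, fix a unit vector $\x$ and apply Lemma \ref{Buzano} with $a=\T\x$, $b=\T^*\x$, $e=\x$. Using $\seq{\x,\T^*\x}=\seq{\T\x,\x}$ and $\seq{\T\x,\T^*\x}=\seq{\T^2\x,\x}$ this gives
\[
\abs{\seq{\T\x,\x}}^{4}\leq\frac{\gamma_1}{4}\norm{\T\x}^2\norm{\T^*\x}^2+\frac{\gamma_2}{4}\norm{\T\x}\,\norm{\T^*\x}\,\abs{\seq{\T^2\x,\x}}.
\]
The crucial observation is the arithmetic identity $\gamma_1+\gamma_2=\frac{(2\beta+1)+(2\beta+3)}{\beta+1}=4$, so that $\frac{\gamma_1}{4}+\frac{\gamma_2}{4}=1$ and the right--hand side above is a genuine convex combination of the two non--negative quantities $\norm{\T\x}^2\norm{\T^*\x}^2$ and $\norm{\T\x}\norm{\T^*\x}\abs{\seq{\T^2\x,\x}}$. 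Raising to the power $r\geq 1$ and invoking the right--hand inequality in Lemma \ref{J} with weight $\alpha=\gamma_1/4$ (the degenerate cases $\T\x=0$ or $\T^*\x=0$ being trivial, since then both sides vanish) therefore keeps the constants linear in $\gamma_1,\gamma_2$:
\[
\abs{\seq{\T\x,\x}}^{4r}\leq\frac{\gamma_1}{4}\norm{\T\x}^{2r}\norm{\T^*\x}^{2r}+\frac{\gamma_2}{4}\norm{\T\x}^{r}\norm{\T^*\x}^{r}\abs{\seq{\T^2\x,\x}}^{r}.
\]

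Next I pass to the operators $|\T|^{2r}$ and $|\T^*|^{2r}$. Since $\norm{\T\x}^2=\seq{|\T|^2\x,\x}$ and $(|\T|^2)^r=|\T|^{2r}$, the H\"older--McCarthy inequality (Lemma \ref{Holder}(i)) gives $\norm{\T\x}^{2r}=\seq{|\T|^2\x,\x}^r\leq\seq{|\T|^{2r}\x,\x}$, and likewise $\norm{\T^*\x}^{2r}\leq\seq{|\T^*|^{2r}\x,\x}$ and $\norm{\T\x}^{r}\norm{\T^*\x}^{r}\leq\seq{|\T|^{2r}\x,\x}^{1/2}\seq{|\T^*|^{2r}\x,\x}^{1/2}$. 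Applying the arithmetic--geometric mean inequality in the forms $pq\leq\frac14(p+q)^2$ and $\sqrt{pq}\leq\frac12(p+q)$ with $p=\seq{|\T|^{2r}\x,\x}$ and $q=\seq{|\T^*|^{2r}\x,\x}$ yields
\[
\abs{\seq{\T\x,\x}}^{4r}\leq\frac{\gamma_1}{16}\seq{(|\T|^{2r}+|\T^*|^{2r})\x,\x}^2+\frac{\gamma_2}{8}\seq{(|\T|^{2r}+|\T^*|^{2r})\x,\x}\abs{\seq{\T^2\x,\x}}^{r}.
\]
Bounding $\seq{(|\T|^{2r}+|\T^*|^{2r})\x,\x}\leq\norm{|\T|^{2r}+|\T^*|^{2r}}$ and $\abs{\seq{\T^2\x,\x}}^r\leq\omega^r(\T^2)$, then taking the supremum over all unit vectors $\x$, produces the operator inequality; substituting the block computations from the first paragraph finishes the proof.

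The step I expect to be the main obstacle is the power--raising. A naive use of Bohr's inequality $(x+y)^r\leq 2^{r-1}(x^r+y^r)$ would replace $\gamma_1,\gamma_2$ by $\gamma_1^r/2^r,\gamma_2^r/2^r$, which does \emph{not} recover the claimed constants $\gamma_1/16$ and $\gamma_2/8$ once $\gamma_2>2$ (and $\gamma_2\in(2,3]$ in general); it is precisely the identity $\gamma_1+\gamma_2=4$, which turns the bound into a convex combination, that allows convexity of $t\mapsto t^r$ to keep the coefficients linear. Everything else is routine application of Cauchy--Schwarz, H\"older--McCarthy, and AM--GM together with the block identities for $\T^2$, $|\T|^{2r}$ and $|\T^*|^{2r}$.
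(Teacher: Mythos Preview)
Your proof is correct and follows essentially the same route as the paper's: Lemma~\ref{Buzano} applied with $a=\T\x$, $b=\T^*\x$, $e=\x$, then Lemma~\ref{J} to raise to the $r$th power, then H\"older--McCarthy and AM--GM, then the supremum. The only organizational difference is that you first reduce to a single--operator inequality and read off the block structure at the end, whereas the paper carries the block computations through the argument; your explicit remark that $\gamma_1+\gamma_2=4$ is what licenses the use of Lemma~\ref{J} (a convex combination) is a genuine clarification that the paper leaves implicit.
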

\begin{proof} Let $\x$ be any unit vector in $\hh$, $\gamma_1=\frac{2\beta+1}{\beta+1}, \gamma_2=\frac{2\beta+3}{\beta+1}$ and let $\T=\begin{bmatrix} O& B\\ C& O \\\end{bmatrix}$. Then
\begin{eqnarray*}
  \abs{\seq{\T\x,\x}}^{4} &\leq& \frac{\gamma_1}{4}\norm{\T\x}^2\norm{\T^*\x}^2+
  \frac{\gamma_2}{4}\abs{\seq{\T^2\x,\x}}\norm{\T\x}\norm{\T^*\x}\bra{\mbox{by Lemma \ref{Buzano}}} \\
   &\leq&\bra{\frac{\gamma_1}{4}\norm{\T\x}^{2r}\norm{\T^*\x}^{2r}+
   \frac{\gamma_2}{4}\norm{\T\x}^{r}\norm{\T^*\x}^{r}\abs{\seq{\T^2\x,\x}}^r}^{\frac{1}{r}}\bra{\mbox{by Lemma \ref{J}}}.
   \end{eqnarray*}
Consequently,
\begin{eqnarray*}
   \abs{\seq{\T\x,\x}}^{4r} &\leq& \frac{\gamma_1}{4}\norm{\T\x}^{2r}\norm{\T^*\x}^{2r}+
   \frac{\gamma_2}{4}\norm{\T\x}^{r}\norm{\T^*\x}^{r}\abs{\seq{\T^2\x,\x}}^r\\
  &=& \frac{\gamma_1}{4}\bra{\sqrt{\seq{\T^*\T\x,\x}^{r}\seq{\T\T^*\x,\x}^{r}}}^2+
   \frac{\gamma_2}{4}\abs{\seq{\T^2\x,\x}}^r\sqrt{\seq{\T^*\T\x,\x}^{r}\seq{\T\T^*\x,\x}^{r}}
    \end{eqnarray*}
      \begin{eqnarray*}
  &\leq& \frac{\gamma_1}{4}\bra{\sqrt{\seq{\begin{bmatrix} |C|^2& O\\ O& |B|^2 \\\end{bmatrix}\x,\x}^{r}\seq{\begin{bmatrix} |B^*|^2& O\\ O& |C^*|^2
  \\\end{bmatrix}\x,\x}^{r}}}^2\\
  &+&\frac{\gamma_2}{4}\abs{\seq{\T^2\x,\x}}^r\sqrt{\seq{\begin{bmatrix} |C|^2& O\\ O& |B|^2 \\\end{bmatrix}\x,\x}^{r}\seq{\begin{bmatrix} |B^*|^2& O\\ O& |C^*|^2
  \\\end{bmatrix}\x,\x}^{r}}\\
  &\leq& \frac{\gamma_1}{16}\bra{\seq{\begin{bmatrix} |C|^2& O\\ O& |B|^2 \\\end{bmatrix}\x,\x}^{r}+\seq{\begin{bmatrix} |B^*|^2& O\\ O& |C^*|^2
  \\\end{bmatrix}\x,\x}^{r}}^2\\
  &+&\frac{\gamma_2}{8}\bra{\seq{\begin{bmatrix} |C|^2& O\\ O& |B|^2 \\\end{bmatrix}\x,\x}^{r}+\seq{\begin{bmatrix} |B^*|^2& O\\ O& |C^*|^2
  \\\end{bmatrix}\x,\x}^{r}}\abs{\seq{\T^2\x,\x}}^r\\
  &&\bra{\mbox{by the arithmetic-geometric mean inequality}}\\
  &\leq& \frac{\gamma_1}{16}\bra{\seq{\begin{bmatrix} |C|^{2r}& O\\ O& |B|^{2r} \\\end{bmatrix}\x,\x}
  +\seq{\begin{bmatrix} |B^*|^{2r}& O\\ O& |C^*|^{2r} \\\end{bmatrix}\x,\x}}^2\\
  &+&\frac{\gamma_2}{8}\bra{\seq{\begin{bmatrix} |C|^{2r}& O\\ O& |B|^{2r} \\\end{bmatrix}\x,\x}+\seq{\begin{bmatrix} |B^*|^{2r}& O\\ O& |C^*|^{2r}
  \\\end{bmatrix}\x,\x}}\abs{\seq{\T^2\x,\x}}^r\,\,\bra{\mbox{by Lemma \ref{Holder}}}\\
   &\leq& \frac{\gamma_1}{16}\norm{\begin{bmatrix} |C|^{2r}+|B^*|^{2r}& O\\ O& |B|^{2r} +|C^*|^{2r} \\\end{bmatrix}}^2\\
   &+&\frac{\gamma_2}{8}\norm{\begin{bmatrix} |C|^{2r}+|B^*|^{2r}& O\\ O& |B|^{2r} +|C^*|^{2r} \\\end{bmatrix}}\abs{\seq{\begin{bmatrix} BC&O\\ O& CB \\\end{bmatrix}\x,\x}}^{r}\\
   &\leq&\frac{\gamma_1}{16}\max\set{\norm{ |C|^{2r}+|B^*|^{2r}}^2,\norm{|B|^{2r} +|C^*|^{2r}}^2}\\
   &+&\frac{\gamma_2}{8}\max\set{\norm{ |C|^{2r}+|B^*|^{2r}},\norm{|B|^{2r} +|C^*|^{2r}}}\max\set{\omega^r(BC),\omega^r(CB)}.
\end{eqnarray*}
By taking the supremum over all vectors of $\x\in\hh$ and using Lemma \ref{lemma5.1}, we obtain
\begin{eqnarray*}
  \omega^{4r}\bra{\begin{bmatrix} O& B\\ C& O \\\end{bmatrix}}&\leq& \frac{\gamma_1}{16}\max\set{\norm{ |C|^{2r}+|B^*|^{2r}}^2,\norm{|B|^{2r} +|C^*|^{2r}}^2}\\
   &+&\frac{\gamma_2}{8}\max\set{\norm{ |C|^{2r}+|B^*|^{2r}},\norm{|B|^{2r} +|C^*|^{2r}}}\max\set{\omega^r(BC),\omega^r(CB)}.
\end{eqnarray*}
\end{proof}
Letting $B=C=M$ in Theorem \ref{Ramadan1} and using Lemma \ref{lemma5.1}, we have
\begin{corollary} Let $M\in\bh$. Then
$$\omega^{4r}(M)\leq \frac{1}{16}\bra{\frac{2\beta+1}{\beta+1}}\norm{|M|^{2r}+|M^*|^{2r}}^2
+\frac{1}{8}\bra{\frac{2\beta+3}{\beta+1}}\norm{|M|^{2r}+|M^*|^{2r}}\omega^{r}(M^2) $$
  for all $r\geq 1$.
\end{corollary}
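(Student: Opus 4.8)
The plan is to derive this corollary as a direct specialization of Theorem \ref{Ramadan1} to the case $B = C = M$, combined with part (c) of Lemma \ref{lemma5.1}.

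First I would substitute $B = C = M$ into the conclusion of Theorem \ref{Ramadan1}. On the right-hand side, every occurrence of $\norm{|C|^{2r}+|B^*|^{2r}}$ and of $\norm{|B|^{2r}+|C^*|^{2r}}$ becomes the common quantity $\norm{|M|^{2r}+|M^*|^{2r}}$, so each $\max\set{\cdot,\cdot}$ of norms collapses to a single term; similarly $\max\set{\omega^r(BC),\omega^r(CB)}$ becomes $\omega^r(M^2)$. With $\gamma_1 = \frac{2\beta+1}{\beta+1}$ and $\gamma_2 = \frac{2\beta+3}{\beta+1}$ exactly as in that theorem, the bound thus reduces to
$$\omega^{4r}\bra{\begin{bmatrix} O & M \\ M & O \end{bmatrix}} \leq \frac{\gamma_1}{16}\norm{|M|^{2r}+|M^*|^{2r}}^2 + \frac{\gamma_2}{8}\norm{|M|^{2r}+|M^*|^{2r}}\,\omega^r(M^2).$$

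Next I would invoke the ``in particular'' part of Lemma \ref{lemma5.1}(c), which gives $\omega\bra{\begin{bmatrix} O & M \\ M & O \end{bmatrix}} = \omega(M)$, hence $\omega^{4r}\bra{\begin{bmatrix} O & M \\ M & O \end{bmatrix}} = \omega^{4r}(M)$. Inserting this identity and the explicit values of $\gamma_1$ and $\gamma_2$ into the previous display yields exactly
$$\omega^{4r}(M) \leq \frac{1}{16}\bra{\frac{2\beta+1}{\beta+1}}\norm{|M|^{2r}+|M^*|^{2r}}^2 + \frac{1}{8}\bra{\frac{2\beta+3}{\beta+1}}\norm{|M|^{2r}+|M^*|^{2r}}\,\omega^r(M^2)$$
for all $r \geq 1$, which is the asserted inequality.

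I do not anticipate any genuine obstacle here: the argument is a pure substitution, the only two points worth noting being that the three maxima appearing in Theorem \ref{Ramadan1} degenerate once $B = C$, and that the off-diagonal block $\begin{bmatrix} O & M \\ M & O \end{bmatrix}$ has the same numerical radius as $M$. As a consistency check one may observe, in the spirit of the remark following Corollary \ref{MOBY-A2}, that bounding $\omega^r(M^2) \leq \omega^{2r}(M) \leq \frac{1}{2}\norm{|M|^{2r}+|M^*|^{2r}}$ (via the power inequality and (\ref{Ineq1.3})) turns the right-hand side into $\frac{\gamma_1+\gamma_2}{16}\norm{|M|^{2r}+|M^*|^{2r}}^2 = \frac{1}{4}\norm{|M|^{2r}+|M^*|^{2r}}^2$, since $\gamma_1+\gamma_2 = 4$; so the corollary refines the square of (\ref{Ineq1.3}). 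This observation is not needed for the proof itself.
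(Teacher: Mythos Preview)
Your proposal is correct and follows exactly the approach indicated in the paper: the corollary is obtained by setting $B=C=M$ in Theorem \ref{Ramadan1} and invoking Lemma \ref{lemma5.1} to identify $\omega\bra{\begin{bmatrix} O & M \\ M & O \end{bmatrix}}$ with $\omega(M)$. Your added consistency check is a pleasant bonus, though not part of the paper's argument.
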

Based on Lemma \ref{Buzano} and the convexity of the function $f(t)=t^r\,(r\geq 1)$, we have
\begin{lemma} \label{modified-Buzano}  Let $a,b,e\in\h$ with $\norm{e}=1$ and $\beta\geq 0$ . Then
  \begin{equation}\label{Mohd-Ineq.}
  \abs{\seq{a,e}\seq{e,b}}^{2r}\leq \frac{1}{4}\bra{\bra{\frac{2\beta+1}{\beta+1}}\norm{a}^{2r}\norm{b}^{2r}+
  \bra{\frac{2\beta+3}{\beta+1}}\norm{a}^r\norm{b}^r\abs{\seq{a,b}}^{r}}
\end{equation}
for all $r\geq 1$.
\end{lemma}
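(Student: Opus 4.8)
The plan is to obtain (\ref{Mohd-Ineq.}) directly from Lemma \ref{Buzano} by raising both sides to the power $r$, exploiting a fortunate algebraic identity for the two coefficients. Write $c_1=\frac{2\beta+1}{\beta+1}$ and $c_2=\frac{2\beta+3}{\beta+1}$, and observe that $c_1+c_2=\frac{4\beta+4}{\beta+1}=4$, so that $\frac{c_1}{4}$ and $\frac{c_2}{4}$ are non-negative weights summing to $1$. Setting $u=\norm{a}^2\norm{b}^2$ and $v=\norm{a}\norm{b}\abs{\seq{a,b}}$, inequality (\ref{Imediae1}) reads exactly
\begin{equation*}
  \abs{\seq{a,e}\seq{e,b}}^2\leq \frac{c_1}{4}\,u+\frac{c_2}{4}\,v .
\end{equation*}

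Next I would apply the function $t\mapsto t^r$ to both sides. Since $r\geq 1$ this function is monotone increasing on $[0,\infty)$, so the inequality is preserved:
\begin{equation*}
  \abs{\seq{a,e}\seq{e,b}}^{2r}\leq \bra{\frac{c_1}{4}\,u+\frac{c_2}{4}\,v}^{r}.
\end{equation*}
Then, invoking the convexity of $t\mapsto t^r$ on $[0,\infty)$ (equivalently Jensen's inequality, or the $n=2$ case of Lemma \ref{Logain1} after pulling the weights inside), with the convex weights $\frac{c_1}{4},\frac{c_2}{4}$, the right-hand side is dominated by $\frac{c_1}{4}u^{r}+\frac{c_2}{4}v^{r}$. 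Substituting back $u^{r}=\norm{a}^{2r}\norm{b}^{2r}$ and $v^{r}=\norm{a}^{r}\norm{b}^{r}\abs{\seq{a,b}}^{r}$ yields
\begin{equation*}
  \abs{\seq{a,e}\seq{e,b}}^{2r}\leq \frac{1}{4}\bra{c_1\norm{a}^{2r}\norm{b}^{2r}+c_2\norm{a}^{r}\norm{b}^{r}\abs{\seq{a,b}}^{r}},
\end{equation*}
which is precisely (\ref{Mohd-Ineq.}).

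There is essentially no genuine obstacle here; the proof is a two-line consequence of Lemma \ref{Buzano}. The only point requiring any care is noticing the identity $c_1+c_2=4$, which is what makes the convexity step applicable and accounts for the fact that the constant $\frac14$ and the coefficients $\frac{2\beta+1}{\beta+1},\frac{2\beta+3}{\beta+1}$ reappear unchanged in the conclusion. (Degenerate cases such as $\seq{a,e}=0$ or $\seq{e,b}=0$ are trivial, and $a=0$ or $b=0$ make both sides vanish, so no separate treatment is needed.)
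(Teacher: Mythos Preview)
Your proof is correct and is essentially the same as the paper's own argument, which simply cites Lemma \ref{Buzano} together with the convexity of $t\mapsto t^r$ for $r\geq 1$. Your explicit observation that $c_1+c_2=4$ is exactly what makes the convexity step go through with the coefficients unchanged, and this is the only point the paper leaves implicit.
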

Now, we are in position to applied Lemma \ref{modified-Buzano} to
establish a new upper bound for the numerical radii of $2\times 2$ operator matrices.
The following outcome is stated as:
\begin{theorem}\label{Theorem2.16} Let $B,C\in\bh$ and let $f, g$ be nonnegative continuous functions on $[0,\infty)$
satisfying $f(t)g(t)=t$, ($t\geq0$). Then
\begin{eqnarray}\label{MALIK-A1}
  \omega^{4r}\bra{\begin{bmatrix} O& B\\ C& O \\\end{bmatrix}}&\leq&\frac{\gamma_1}{4}\max\set{\norm{ |C|^{4r}+|B^*|^{4r}},\norm{|B|^{4r} +|C^*|^{4r}}}\nonumber \\
   &+&\frac{\gamma_2}{2}\max\set{\norm{ |C|^{2r}+|B^*|^{2r}},\norm{|B|^{2r} +|C^*|^{2r}}}\times\\
   &&\max\set{\norm{\frac{1}{p}f^{pr}(|BC|)+\frac{1}{q}g^{qr}(|C^*B^*|)},\norm{\frac{1}{p}f^{pr}(|CB|)+\frac{1}{q}g^{qr}(|B^*C^*|)}}\nonumber
\end{eqnarray}
for all $r \geq1$, $p, q > 1$ with $\frac{1}{p}+\frac{1}{q}=1$ and $pr,qr\geq  2$, where $\gamma_1=\frac{2\beta+1}{4(\beta+1)}$ and $\gamma_2=\frac{2\beta+3}{4(\beta+1)}$.
\end{theorem}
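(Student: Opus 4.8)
The plan is to mimic the structure of the proof of Theorem \ref{Ramadan1}, but replacing the elementary power inequality (Lemma \ref{Holder}) on the term $\abs{\seq{\T^2\x,\x}}^r$ by the generalized mixed Schwarz inequality together with Young's inequality, which is precisely what the hypotheses $p,q>1$, $\frac1p+\frac1q=1$, $pr,qr\ge 2$ are set up to accommodate. Set $\T=\begin{bmatrix} O & B \\ C & O\end{bmatrix}$ and let $\x=\begin{bmatrix} x_1 \\ x_2\end{bmatrix}$ be a unit vector in $\hh$. First I would apply Lemma \ref{modified-Buzano} with $a=\T\x$, $b=\T^*\x$, $e=\x$, so that $\seq{a,b}=\seq{\T\x,\T^*\x}=\seq{\T^2\x,\x}$, giving
\begin{equation*}
  \abs{\seq{\T\x,\x}}^{4r}\leq\frac{1}{4}\bra{\frac{2\beta+1}{\beta+1}\norm{\T\x}^{4r}\norm{\T^*\x}^{4r}+\frac{2\beta+3}{\beta+1}\norm{\T\x}^{2r}\norm{\T^*\x}^{2r}\abs{\seq{\T^2\x,\x}}^{2r}}.
\end{equation*}
(Here the exponent inside Lemma \ref{modified-Buzano} is taken to be $2r$, so the $\tfrac14$ prefactor stays and the constants $\gamma_1,\gamma_2$ absorb it as declared.)

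Next I would bound the two pieces separately. For the first piece I use $\norm{\T\x}^2=\seq{\abs{\T}^2\x,\x}$, $\norm{\T^*\x}^2=\seq{\abs{\T^*}^2\x,\x}$, then Lemma \ref{Holder} (convexity, $r\ge1$) to pass from $\seq{\cdot\,\x,\x}^{2r}$ to $\seq{(\cdot)^{2r}\x,\x}$, observing as in Theorem \ref{Ramadan1} that $\abs{\T}^2=\begin{bmatrix}|C|^2 & O\\ O & |B|^2\end{bmatrix}$ and $\abs{\T^*}^2=\begin{bmatrix}|B^*|^2 & O\\ O & |C^*|^2\end{bmatrix}$; the arithmetic–geometric mean inequality collapses the product of square roots into $\tfrac14(\,\cdot+\cdot\,)^2$, and since $pr,qr\ge 2$ forces $2r\ge 2$ this is legitimate, yielding $\tfrac{\gamma_1}{4}\norm{|C|^{4r}+|B^*|^{4r}}$ or $\norm{|B|^{4r}+|C^*|^{4r}}$ after taking the block-diagonal norm and maximizing. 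For the cross term I write $\norm{\T\x}^{2r}\norm{\T^*\x}^{2r}=\sqrt{\seq{\abs{\T}^2\x,\x}^{2r}\seq{\abs{\T^*}^2\x,\x}^{2r}}$, apply AM–GM to bound this by $\tfrac12\bra{\seq{\abs{\T}^{4r}\x,\x}+\seq{\abs{\T^*}^{4r}\x,\x}}$ — wait, more carefully: I keep it as $\tfrac12(\seq{\abs{\T}^2\x,\x}^{2r}+\seq{\abs{\T^*}^2\x,\x}^{2r})$ then push the exponent inside by Lemma \ref{Holder} to get $\tfrac12\seq{(\abs{\T}^{4r}+\abs{\T^*}^{4r})\x,\x}$ up to the block structure; this produces the factor $\max\set{\norm{|C|^{2r}+|B^*|^{2r}},\norm{|B|^{2r}+|C^*|^{2r}}}$ once I note $\abs{\T}^{4r}$ has diagonal blocks $|C|^{4r},|B|^{4r}$ and regroup—the exponent $2r$ appears in the statement because Lemma \ref{Holder} is applied at power $2r$ on $\seq{\abs{\T}^2\x,\x}$, i.e. $\seq{\abs{\T}^2\x,\x}^{2r}\le\seq{\abs{\T}^{4r}\x,\x}$, but to match the stated $|C|^{2r}+|B^*|^{2r}$ I instead first use Cauchy–Schwarz to split and Lemma \ref{Holder} at power $r$.

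The genuinely new ingredient, and the main obstacle, is the treatment of $\abs{\seq{\T^2\x,\x}}^{2r}$. Since $\T^2=\begin{bmatrix} BC & O\\ O & CB\end{bmatrix}$, I have $\abs{\seq{\T^2\x,\x}}\le\abs{\seq{BCx_1,x_1}}+\abs{\seq{CBx_2,x_2}}$; applying Lemma \ref{Lem2.3}(ii) to $BC$ (resp. $CB$) with the functions $f,g$ gives $\abs{\seq{BCx_1,x_1}}\le\norm{f(|BC|)x_1}\,\norm{g(|C^*B^*|)x_1}$, and then Young's inequality $st\le\tfrac1p s^p+\tfrac1q t^q$ converts the product into $\tfrac1p\norm{f(|BC|)x_1}^p+\tfrac1q\norm{g(|C^*B^*|)x_1}^q=\seq{(\tfrac1p f^p(|BC|)+\tfrac1q g^q(|C^*B^*|))x_1,x_1}$. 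To get the exponent $r$ inside $f,g$ demanded by the statement I must first raise to the power $r$ and invoke Lemma \ref{J}/Bohr to distribute, then apply Lemma \ref{Holder} with the constraint $pr,qr\ge 2$ ensuring $\norm{f(|BC|)x_1}^{pr}=\seq{f^2(|BC|)x_1,x_1}^{pr/2}\le\seq{f^{pr}(|BC|)x_1,x_1}$; the block-diagonal form then reads $\begin{bmatrix}\tfrac1p f^{pr}(|BC|)+\tfrac1q g^{qr}(|C^*B^*|) & O\\ O & \tfrac1p f^{pr}(|CB|)+\tfrac1q g^{qr}(|B^*C^*|)\end{bmatrix}$, whose norm is the stated maximum. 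I expect the bookkeeping of exponents (which power of $r$ is applied at which step, and verifying every use of Lemma \ref{Holder} has its base exponent $\ge1$, which is where $pr,qr\ge 2$ enters) to be the delicate part; everything else is a direct transcription of the Theorem \ref{Ramadan1} argument. Finally I pass from $\seq{(\cdot)\x,\x}$ to the operator norm (positivity makes $\omega=\norm{\cdot}$ on the block-diagonal positive operators), take the supremum over unit $\x$, and invoke Lemma \ref{lemma5.1}(c) (or rather the $\omega\big(\begin{bmatrix}O&S\\S&O\end{bmatrix}\big)$ reduction) to identify the left side as $\omega^{4r}(\T)$.
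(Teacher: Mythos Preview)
Your overall plan is the paper's plan: modified Buzano on $(\T\x,\T^*\x,\x)$, then AM--GM plus H\"older--McCarthy on the norm factors, then mixed Schwarz plus Young plus H\"older--McCarthy on the $\T^2$ factor. Two execution points, however, do not line up.

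First, the exponent bookkeeping in the Buzano step is off. With $a=\T\x$, $b=\T^*\x$, $e=\x$ one has $\seq{a,e}\seq{e,b}=\seq{\T\x,\x}\seq{\x,\T^*\x}=\abs{\seq{\T\x,\x}}^2$, so applying Lemma~\ref{modified-Buzano} with parameter $r$ (not $2r$) already produces $\abs{\seq{\T\x,\x}}^{4r}$ on the left, and on the right the exponents are $\norm{\T\x}^{2r}\norm{\T^*\x}^{2r}$ and $\norm{\T\x}^{r}\norm{\T^*\x}^{r}\abs{\seq{\T^2\x,\x}}^{r}$ --- exactly half of what you wrote. With your doubled exponents the subsequent AM--GM/H\"older--McCarthy chain would land on $|C|^{8r}+|B^*|^{8r}$ and $|C|^{4r}+|B^*|^{4r}$ rather than the $4r$ and $2r$ powers in the statement. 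Once you halve the exponents, the argument runs exactly as in the paper: $\norm{\T\x}^{2r}\norm{\T^*\x}^{2r}=\seq{|\T|^2\x,\x}^r\seq{|\T^*|^2\x,\x}^r$, then AM--GM and H\"older--McCarthy push the $2r$ inside to give $|\T|^{4r}+|\T^*|^{4r}$; similarly the cross factor gives $|\T|^{2r}+|\T^*|^{2r}$.

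Second, do not split $\seq{\T^2\x,\x}$ by the triangle inequality into $\abs{\seq{BCx_1,x_1}}+\abs{\seq{CBx_2,x_2}}$ before raising to the $r$th power; that route forces Bohr's inequality and an unwanted $2^{r-1}$ factor. The paper instead applies Lemma~\ref{Lem2.3}(ii) directly to the block operator $\T^2$ on $\hh$:
\[
\abs{\seq{\T^2\x,\x}}^r\le \seq{f^2(|\T^2|)\x,\x}^{r/2}\seq{g^2(|(\T^2)^*|)\x,\x}^{r/2}
\le \tfrac1p\seq{f^{pr}(|\T^2|)\x,\x}+\tfrac1q\seq{g^{qr}(|(\T^2)^*|)\x,\x},
\]
using Young and then H\"older--McCarthy (here $pr/2,qr/2\ge1$). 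Only \emph{after} this does one read off the block-diagonal structure $|\T^2|=\mathrm{diag}(|BC|,|CB|)$, $|(\T^2)^*|=\mathrm{diag}(|C^*B^*|,|B^*C^*|)$ to obtain the displayed maximum. Your final block-diagonal expression is correct; the path to it just needs to stay at the $\hh$ level.
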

\begin{proof} Let $\x$ be any unit vector in $\hh$, $\gamma_1=\frac{2\beta+1}{4(\beta+1)}$, $\gamma_2=\frac{2\beta+3}{4(\beta+1)}$, and let $\T=\begin{bmatrix} O& B\\ C& O \\\end{bmatrix}$. Then, we have
\begin{eqnarray}\label{Isra1}
  \abs{\seq{\T^2\x,\x}}^{r} &\leq&\norm{f(|\T^2|)}^{r}\norm{g(|(\T^{2})^*|)}^r\bra{\mbox{by Lemma \ref{Lem2.3}}} \nonumber\\
  &=& \seq{f^2\bra{|\T^2|}\x,\x}^{\frac{r}{2}}\seq{g^2(|(\T^{2})^*|)\x,\x}^{\frac{r}{2}}\nonumber\\
  &\leq& \frac{1}{p}\seq{f^2\bra{|\T^2|}\x,\x}^{\frac{pr}{2}}+\frac{1}{q}\seq{g^2(|(\T^{2})^*|)\x,\x}^{\frac{qr}{2}}
  \bra{\mbox{by Young's inequality}}\nonumber\\
  &\leq&\frac{1}{p}\seq{f^{pr}\bra{|\T^2|}\x,\x}+\frac{1}{q}\seq{g^{qr}(|(\T^{2})^*|)\x,\x}\bra{\mbox{by Lemma \ref{Holder}}}\\
  &=&\frac{1}{p}\seq{f^{pr}\bra{\begin{bmatrix} |BC|& O\\ O& |CB| \\\end{bmatrix}}\x,\x}+\frac{1}{q}
  \seq{g^{qr}\bra{\begin{bmatrix} |C^*B^*|& O\\ O& |B^*C^*| \\\end{bmatrix}}\x,\x}\nonumber\\
  &=&\seq{\begin{bmatrix}\frac{1}{p}f^{pr}(|BC|)+\frac{1}{q}g^{qr}(|C^*B^*|)& O\\ O&
  \frac{1}{p}f^{pr}(|CB|)+\frac{1}{q}g^{qr}(|B^*C^*|)  \\\end{bmatrix}\x,\x}\nonumber\\
  &\leq& \max\set{\norm{\frac{1}{p}f^{pr}(|BC|)+\frac{1}{q}g^{qr}(|C^*B^*|)},\norm{\frac{1}{p}f^{pr}(|CB|)+\frac{1}{q}g^{qr}(|B^*C^*|)}}.\nonumber
\end{eqnarray}
Now, by using Lemma \ref{modified-Buzano}, we get
\begin{eqnarray*}
  \abs{\seq{\T\x,\x}}^{4r} &\leq&\gamma_1\norm{\T\x}^{2r}\norm{\T^*\x}^{2r}+\gamma_2\norm{\T\x}^{r}\norm{\T^*\x}^{r}\abs{\seq{\T\x,\T^*\x}}^r \\
   &\leq& \gamma_1\seq{\T^*\T\x,\x}^{r}\seq{\T^*\T\x,\x}^{r}+\gamma_2\seq{\T^*\T\x,\x}^{r/2}\seq{\T^*\T\x,\x}^{r/2} \abs{\seq{\T^2\x,\x}}^r\\
   &\leq& \frac{\gamma_1}{4}\bra{\seq{\T^*\T\x,\x}^{2r}+\seq{\T^*\T\x,\x}^{2r}}+\frac{\gamma_2}{2}\bra{\seq{\T^*\T\x,\x}^{r}+\seq{\T^*\T\x,\x}^{r}}
   \abs{\seq{\T^2\x,\x}}^r\\
   &&\bra{\mbox{by the arithmetic-geometric mean inequality}}\\
   &\leq& \frac{\gamma_1}{4}\bra{\seq{\bra{\abs{\T}^{4r}+\abs{\T^*}^{4r}}\x,\x}}+\frac{\gamma_2}{2}
   \bra{\seq{\bra{\abs{\T}^{2r}+\abs{\T^*}^{2r}}\x,\x}}
   \abs{\seq{\T^2\x,\x}}^r\\
   &&\bra{\mbox{by the H\"older-McCarthy inequality}}\\
   &\leq&\frac{\gamma_1}{4}\norm{\abs{\T}^{4r}+\abs{\T^*}^{4r}}+\frac{\gamma_2}{2}\norm{\abs{\T}^{2r}+\abs{\T^*}^{2r}}\abs{\seq{\T^2\x,\x}}^r
\end{eqnarray*}
By taking the supremum over all vectors of $\x\in\hh$ and using Lemma \ref{lemma5.1} and the inequality (\ref{Isra1}), we obtain
\begin{eqnarray*}
  \omega^{4r}(\T) &\leq&\frac{\gamma_1}{4}\max\set{\norm{ |C|^{4r}+|B^*|^{4r}},\norm{|B|^{4r} +|C^*|^{4r}}} \\
   &+&\frac{\gamma_2}{2}\max\set{\norm{ |C|^{2r}+|B^*|^{2r}},\norm{|B|^{2r} +|C^*|^{2r}}}\times\\
   &&\max\set{\norm{\frac{1}{p}f^{pr}(|BC|)+\frac{1}{q}g^{qr}(|C^*B^*|)},\norm{\frac{1}{p}f^{pr}(|CB|)+\frac{1}{q}g^{qr}(|B^*C^*|)}}.
\end{eqnarray*}
This completes the proof.
\end{proof}
The inequality (\ref{MALIK-A1}) leads to various numerical radius inequalities when considered as specific instances. As an illustration, when we
choose $f(t))=t^{\lambda}$ and $g(t)=t^{1-\lambda}$ and $B=C=M$ and set $p=q=2$ in (\ref{MALIK-A1}), the ensuing outcome by using Lemma \ref{lemma5.1} is
as follows.
\begin{corollary}\label{Cor3.18} Let $M\in\bh$. Then
\begin{equation}\label{Ineq2.17}
  \omega^{4r}(M)\leq \frac{2\beta+1}{16(\beta+1)}\norm{ |M|^{4r}+|M^*|^{4r}}+\frac{2\beta+3}{16(\beta+1)}\norm{ |M|^{2r}+|M^*|^{2r}}
  \norm{|M^2|^{2r\lambda}+|M^{*2}|^{2r(1-\lambda)}}
\end{equation}
for every $r\geq 1$ and $\lambda\in [0,1]$. In particular, if $\lambda=\frac{1}{2}$, then
\begin{equation*}
  \omega^{4r}(M)\leq \frac{2\beta+1}{16(\beta+1)}\norm{ |M|^{4r}+|M^*|^{4r}}+\frac{2\beta+3}{16(\beta+1)}\norm{ |M|^{2r}+|M^*|^{2r}}
  \norm{|M^2|^{r}+|M^{*2}|^{r}}.
\end{equation*}
\end{corollary}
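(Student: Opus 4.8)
The plan is to obtain Corollary~\ref{Cor3.18} as a direct specialization of Theorem~\ref{Theorem2.16}. First I would substitute $B=C=M$, $p=q=2$, $f(t)=t^{\lambda}$, and $g(t)=t^{1-\lambda}$ into inequality~(\ref{MALIK-A1}). The side conditions required by the theorem, namely $p,q>1$ with $\frac{1}{p}+\frac{1}{q}=1$ and $pr,qr\geq 2$, reduce here to the trivial identity $\frac12+\frac12=1$ and to $2r\geq 2$, i.e.\ $r\geq 1$, which is precisely the range asserted in the corollary; so Theorem~\ref{Theorem2.16} applies with no additional hypotheses.

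Next I would simplify each of the three factors on the right-hand side. Since $B=C=M$, the two quantities inside the first maximum both equal $\norm{|M|^{4r}+|M^*|^{4r}}$, and the two inside the second maximum both equal $\norm{|M|^{2r}+|M^*|^{2r}}$, so each of these maxima collapses to a single norm. For the third maximum I would use $|BC|=|CB|=|M^2|$ and $|C^*B^*|=|B^*C^*|=|(M^2)^*|=|M^{*2}|$; then, with $p=q=2$,
$$\frac{1}{p}f^{pr}(|BC|)+\frac{1}{q}g^{qr}(|C^*B^*|)=\frac12\bra{f^{2r}(|M^2|)+g^{2r}(|M^{*2}|)}.$$
Substituting $f(t)=t^{\lambda}$ and $g(t)=t^{1-\lambda}$ gives $f^{2r}(t)=t^{2r\lambda}$ and $g^{2r}(t)=t^{2r(1-\lambda)}$, so this factor becomes $\frac12\norm{|M^2|^{2r\lambda}+|M^{*2}|^{2r(1-\lambda)}}$.

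Finally I would collect the constants. With $\gamma_1=\frac{2\beta+1}{4(\beta+1)}$ and $\gamma_2=\frac{2\beta+3}{4(\beta+1)}$ as in Theorem~\ref{Theorem2.16}, the coefficient of the first term is $\frac{\gamma_1}{4}=\frac{2\beta+1}{16(\beta+1)}$ and the coefficient of the product term is $\frac{\gamma_2}{2}\cdot\frac12=\frac{2\beta+3}{16(\beta+1)}$, which matches~(\ref{Ineq2.17}). On the left-hand side, Lemma~\ref{lemma5.1}(c) — in particular the special case $\omega\bra{\begin{bmatrix} O & S \\ S & O \end{bmatrix}}=\omega(S)$ — identifies $\omega^{4r}\bra{\begin{bmatrix} O & M \\ M & O \end{bmatrix}}$ with $\omega^{4r}(M)$, completing the derivation; the case $\lambda=\frac12$ then follows at once since $2r\lambda=2r(1-\lambda)=r$. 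Since this is entirely a matter of careful bookkeeping, there is no genuine obstacle here — the only points demanding attention are keeping the functional identities such as $f^{2r}(|M^2|)=|M^2|^{2r\lambda}$ straight and checking that the side conditions on $p,q,r$ are indeed met under the chosen substitution.
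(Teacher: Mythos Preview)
Your proposal is correct and follows exactly the approach of the paper: the corollary is stated there as the specialization of Theorem~\ref{Theorem2.16} obtained by taking $B=C=M$, $p=q=2$, $f(t)=t^{\lambda}$, $g(t)=t^{1-\lambda}$, together with Lemma~\ref{lemma5.1} to identify the left-hand side. Your bookkeeping on the constants and the collapsing of the maxima is accurate, and the side conditions $pr,qr\geq 2$ and $f(t)g(t)=t$ are indeed satisfied under this substitution.
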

\begin{theorem}\label{Theorem 2.16} Let $B,C\in\bh$, $\alpha\in\c\setminus\{0\}$ and $\beta\geq 0$. Then
\begin{eqnarray}
 \omega^4\bra{\begin{bmatrix} O& B\\ C& O \\\end{bmatrix}} &\leq&\frac{\chi_1}{4}\max\set{\norm{|C|^{2\mu}+|B^*|^{2(1-\mu)}}^2,\norm{|B|^{2\mu}+|C^*|^{2(1-\mu)} }^2}\nonumber \\
     &+& \frac{\chi_2}{2}\max\set{\norm{|C|^{2\mu}+|B^*|^{2(1-\mu)}},\norm{|B|^{2\mu}+|C^*|^{2(1-\mu)} }}\times\nonumber\\
     &&\max\set{\omega^2\bra{|B^*|^{2(1-\mu)}|C|^{2\mu}},\omega^2\bra{|C^*|^{2(1-\mu)}|B|^{2\mu}}}
\end{eqnarray}
for all $\mu\in[0,1]$, where $\chi_1=\frac{\beta+(\beta+1)\max\set{1,\abs{\alpha-1}^2}}{\abs{\alpha}^2(\beta+1)}$ and
$\chi_2=\frac{1+2(\beta+1)\max\set{1,\abs{\alpha-1}}}{\abs{\alpha}^2(\beta+1)}$ .
\end{theorem}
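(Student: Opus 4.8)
The plan is to run the same machine as in Theorems \ref{MOBY-A1} and \ref{Theorem2.16}, but to pass the numerical radius through the generalized mixed Schwarz inequality first, so that the fractional powers $|\T|^{2\mu}$ and $|\T^*|^{2(1-\mu)}$ appear, and then to split the fourth power with the mixed Buzano inequality of Lemma \ref{Mix-al-be} (which is exactly where the constants $\chi_1,\chi_2$ come from). First I would fix a unit vector $\x=\seq{x_1,x_2}\in\hh$ and set $\T=\begin{bmatrix}O&B\\ C&O\end{bmatrix}$. Since $\T^*\T=\begin{bmatrix}|C|^2&O\\ O&|B|^2\end{bmatrix}$ and $\T\T^*=\begin{bmatrix}|B^*|^2&O\\ O&|C^*|^2\end{bmatrix}$, the positive operators $|\T|^{2\mu}$, $|\T^*|^{2(1-\mu)}$ and the product $|\T^*|^{2(1-\mu)}|\T|^{2\mu}$ are all block diagonal, with diagonal blocks $\set{|C|^{2\mu},|B|^{2\mu}}$, $\set{|B^*|^{2(1-\mu)},|C^*|^{2(1-\mu)}}$ and $\set{|B^*|^{2(1-\mu)}|C|^{2\mu},\,|C^*|^{2(1-\mu)}|B|^{2\mu}}$. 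By Lemma \ref{lemma5.1} the operator norm and the numerical radius of such block-diagonal operators equal the maximum over the two diagonal blocks, so the two maxima on the right-hand side are precisely $\norm{|\T|^{2\mu}+|\T^*|^{2(1-\mu)}}^2$ and $\omega^2\bra{|\T^*|^{2(1-\mu)}|\T|^{2\mu}}$. This reduces the theorem to one estimate for $\omega^4(\T)$.

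For the core estimate I would apply the generalized mixed Schwarz inequality (Lemma \ref{Lem2.3}(ii)) with $f(t)=t^{\mu}$ and $g(t)=t^{1-\mu}$ to get $\abs{\seq{\T\x,\x}}^2\leq \seq{|\T|^{2\mu}\x,\x}\seq{|\T^*|^{2(1-\mu)}\x,\x}$. Squaring then bounds $\abs{\seq{\T\x,\x}}^4$ above by $\abs{\seq{a,\x}\seq{\x,b}}^2$ with $a=|\T|^{2\mu}\x$ and $b=|\T^*|^{2(1-\mu)}\x$, since both scalar factors are nonnegative reals. Next I would invoke the mixed Buzano inequality of Lemma \ref{Mix-al-be} with parameters $\alpha$ and $\beta$, which splits this into the norm term $\chi_1\norm{a}^2\norm{b}^2$ and the cross term $\chi_2\norm{a}\norm{b}\abs{\seq{a,b}}$; the inner product here is exactly $\seq{|\T^*|^{2(1-\mu)}|\T|^{2\mu}\x,\x}$, which is what produces the factor $\omega^2\bra{|\T^*|^{2(1-\mu)}|\T|^{2\mu}}$ after the supremum.

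The remaining work is consolidation. I would bound $\seq{|\T|^{2\mu}\x,\x}\seq{|\T^*|^{2(1-\mu)}\x,\x}$ by $\tfrac14\seq{\bra{|\T|^{2\mu}+|\T^*|^{2(1-\mu)}}\x,\x}^2$ via the arithmetic-geometric mean inequality, and the analogous factor in the cross term by $\tfrac12\seq{\bra{|\T|^{2\mu}+|\T^*|^{2(1-\mu)}}\x,\x}$, using the Hölder-McCarthy inequality (Lemma \ref{Holder}) to align the operator powers produced by $\norm{a},\norm{b}$ with the powers $2\mu$, $2(1-\mu)$ required by the statement. Taking the supremum over unit $\x$ and applying Lemma \ref{lemma5.1} converts $\seq{\bra{|\T|^{2\mu}+|\T^*|^{2(1-\mu)}}\x,\x}$ into $\norm{|\T|^{2\mu}+|\T^*|^{2(1-\mu)}}$ and the cross inner product into $\omega\bra{|\T^*|^{2(1-\mu)}|\T|^{2\mu}}$, which unfold into the two maxima by the block-diagonal computation of the first paragraph.

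The hard part will be the exponent bookkeeping in this middle step. The norms $\norm{a}=\norm{|\T|^{2\mu}\x}$ and $\norm{b}=\norm{|\T^*|^{2(1-\mu)}\x}$ naturally carry \emph{doubled} powers of $|\T|$ and $|\T^*|$, whereas the first term of the statement is written with the single powers $|C|^{2\mu}$, $|B^*|^{2(1-\mu)}$; reconciling these — by a careful choice of the vectors fed into Lemma \ref{Mix-al-be} together with the correct ordering of the Hölder-McCarthy and arithmetic-geometric mean steps, so that the norm term emerges with the $2\mu$, $2(1-\mu)$ powers while the cross term is organized as the numerical radius of the single product operator $|\T^*|^{2(1-\mu)}|\T|^{2\mu}$ — is the genuinely delicate point. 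Once the intermediate vectors are pinned down, the block-diagonal identities and the passage to the supremum complete the argument.
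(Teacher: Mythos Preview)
Your outline is essentially the paper's argument: fix a unit vector $\x$, set $\T=\begin{bmatrix}O&B\\C&O\end{bmatrix}$, use the mixed Schwarz inequality to reach $\abs{\seq{\T\x,\x}}^{4}\le \seq{|\T|^{2\mu}\x,\x}^{2}\seq{|\T^*|^{2(1-\mu)}\x,\x}^{2}$, feed this into Lemma~\ref{Mix-al-be}, apply AM--GM, take the supremum, and read off the two maxima from the block-diagonal structure of $|\T|^{2\mu}+|\T^*|^{2(1-\mu)}$ and $|\T^*|^{2(1-\mu)}|\T|^{2\mu}$ via Lemma~\ref{lemma5.1}.

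The one place you diverge from the paper is exactly the point you flag as ``hard'': the vectors fed to Lemma~\ref{Mix-al-be}. You take $a=|\T|^{2\mu}\x$, $b=|\T^*|^{2(1-\mu)}\x$, which forces $\norm{a}^{2}=\seq{|\T|^{4\mu}\x,\x}$ and leaves you with doubled exponents and a first-power $\omega$ in the cross term. The paper instead takes $a=|\T|^{\mu}\x$ and $b=|\T^*|^{1-\mu}\x$ (half your exponents), so that $\norm{a}^{2}=\seq{|\T|^{2\mu}\x,\x}$ and $\norm{b}^{2}=\seq{|\T^*|^{2(1-\mu)}\x,\x}$ land directly on the target exponents, and no H\"older--McCarthy step is needed in the norm term; the cross inner product then becomes $\seq{|\T^*|^{1-\mu}|\T|^{\mu}\x,\x}$. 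This is the ``careful choice of the vectors'' you were looking for.

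That said, your caution about the bookkeeping is well placed even against the paper's own argument. With the paper's $a,b$ the left side of Lemma~\ref{Mix-al-be} is $\abs{\seq{|\T|^{\mu}\x,\x}\seq{\x,|\T^*|^{1-\mu}\x}}^{2}$, which is \emph{not} the quantity $\seq{|\T|^{2\mu}\x,\x}^{2}\seq{|\T^*|^{2(1-\mu)}\x,\x}^{2}$ from the preceding line (indeed it is smaller, by Cauchy--Schwarz), and the paper's cross term carries $\abs{\seq{a,b}}^{2}$ while Lemma~\ref{Mix-al-be} only supplies $\abs{\seq{a,b}}$. So the exponent tension you identified is not cleanly resolved in the paper either; the paper's proof and the stated exponents in the $\omega^{2}$ factor are not mutually consistent as written.
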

\begin{proof} Let $\x\in\hh$ be any unit vector, $\chi_1=\frac{\beta+(\beta+1)\max\set{1,\abs{\alpha-1}^2}}{\abs{\alpha}^2(\beta+1)}$,
$\chi_2=\frac{1+2(\beta+1)\max\set{1,\abs{\alpha-1}}}{\abs{\alpha}^2(\beta+1)}$, and let $\T=\begin{bmatrix} O& B\\ C& O \\\end{bmatrix}$. Then
\begin{eqnarray*}
  \abs{\seq{\T\x,\x}}^{4}&\leq& \seq{|\T|^{2\mu}\x,\x}^{2}\seq{|\T^*|^{2(1-\mu)}\x,\x}^{2}\,\bra{\mbox{by Lemma \ref{Lem2.3} and Lemma \ref{Holder}}} \\
   &=& \seq{|\T|^{2\mu}\x,\x}^{2}\seq{\x,|\T^*|^{2(1-\mu)}\x}^{2}\\
   &\leq& \chi_1\bra{\sqrt{\norm{|\T|^{\mu}\x}^2\norm{|\T^*|^{(1-\mu)}\x}^2}}^2+\chi_2\bra{\sqrt{\norm{|\T|^{\mu}\x}\norm{|\T^*|^{(1-\mu)}\x}}}^2
   \abs{\seq{|\T|^{\mu}\x,|\T^*|^{(1-\mu)}\x}}^2\\
   &&\bra{\mbox{by Lemma \ref{Mix-al-be}}}\\
   &\leq& \frac{\chi_1}{4}\bra{\norm{|\T|^{\mu}\x}^2+\norm{|\T^*|^{(1-\mu)}\x}^2}^2+
   \frac{\chi_2}{2}\bra{\norm{|\T|^{\mu}\x}^2+\norm{|\T^*|^{(1-\mu)}\x}^2}\abs{\seq{|\T^*|^{(1-\mu)}|\T|^{\mu}\x,\x}}^2\\
   &&\bra{\mbox{by the arithmetic-geometric mean inequality}}\\
   &\leq& \frac{\chi_1}{4}\bra{\seq{\bra{|\T|^{2\mu}+|\T^*|^{2(1-\mu)}}\x,x}}^2+\frac{\chi_2}{2}\bra{\seq{\bra{|\T|^{2\mu}+|\T^*|^{2(1-\mu)}}\x,x}}
   \abs{\seq{|\T^*|^{(1-\mu)}|\T|^{\mu}\x,\x}}^2
   \end{eqnarray*}
   \begin{eqnarray*}
   &\leq& \frac{\chi_1}{4}\norm{|\T|^{2\mu}+|\T^*|^{2(1-\mu)}}^2+\frac{\chi_2}{2}\norm{|\T|^{2\mu}+|\T^*|^{2(1-\mu)}}\omega^2\bra{|\T^*|^{(1-\mu)}|\T|^{\mu}}\\
   &=&\frac{\chi_1}{4}\norm{\begin{bmatrix} |C|^{2\mu}+|B^*|^{2(1-\mu)}& O\\ O&|B|^{2\mu}+|C^*|^{2(1-\mu)} \\\end{bmatrix}}^2\\
   &+& \frac{\chi_2}{2}\norm{\begin{bmatrix} |C|^{2\mu}+|B^*|^{2(1-\mu)}& O\\ O&|B|^{2\mu}+|C^*|^{2(1-\mu)} \\\end{bmatrix}}
   \omega^{2}\bra{\begin{bmatrix} |B^*|^{2(1-\mu)}|C|^{2\mu}& O\\ O& |C^*|^{2(1-\mu)}|B|^{2\mu} \\\end{bmatrix}}
   \end{eqnarray*}
  By taking the supremum over all vectors of $\x\in\hh$ and using Lemma \ref{lemma5.1}, we obtain
  \begin{eqnarray*}
    \omega^4(\T) &\leq&\frac{\chi_1}{4}\max\set{\norm{|C|^{2\mu}+|B^*|^{2(1-\mu)}}^2,\norm{|B|^{2\mu}+|C^*|^{2(1-\mu)} }^2} \\
     &+& \frac{\chi_2}{2}\max\set{\norm{|C|^{2\mu}+|B^*|^{2(1-\mu)}},\norm{|B|^{2\mu}+|C^*|^{2(1-\mu)} }}\times\\
     &&\max\set{\omega^2\bra{|B^*|^{2(1-\mu)}|C|^{2\mu}},\omega^2\bra{|C^*|^{2(1-\mu)}|B|^{2\mu}}}.
  \end{eqnarray*}
\end{proof}
Letting $B=C=M$ in Theorem \ref{Theorem 2.16} and using Lemma \ref{lemma5.1}, we have
\begin{corollary} Let $M\in\bh$, $\alpha\in\c\setminus\{0\}$ and $\beta\geq 0$. Then
$$ \omega^4(M)=\frac{\chi_1}{4}\norm{|M|^{2\mu}+|M^*|^{2(1-\mu)}}^2+\frac{\chi_2}{2}\norm{|M|^{2\mu}
+|M^*|^{2(1-\mu)}}\omega^2\bra{|M^*|^{2(1-\mu)}|M|^{2\mu}},$$
where $\chi_1=\frac{\beta+(\beta+1)\max\set{1,\abs{\alpha-1}^2}}{\abs{\alpha}^2(\beta+1)}$ and
$\chi_2=\frac{1+2(\beta+1)\max\set{1,\abs{\alpha-1}}}{\abs{\alpha}^2(\beta+1)}$ .
In particular, if $\alpha=2$, then
$$\omega^4(M)\leq \frac{2\beta+1}{16(\beta+1)}\norm{|M|^{2\mu}+|M^*|^{2(1-\mu)}}^2
+\frac{2\beta+3}{8(\beta+1)}\norm{|M|^{2\mu}+|M^*|^{2(1-\mu)}}\omega^2\bra{|M^*|^{2(1-\mu)}|M|^{2\mu}}.$$
\end{corollary}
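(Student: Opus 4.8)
The plan is to obtain this corollary as a direct specialization of Theorem~\ref{Theorem 2.16}, so no new estimates are needed. First I would set $B=C=M$ in the conclusion of Theorem~\ref{Theorem 2.16}. Under this substitution each of the three brackets $\max\set{\cdot,\cdot}$ on the right-hand side collapses to a single term, because the paired operators coincide: $|C|^{2\mu}+|B^*|^{2(1-\mu)}$ and $|B|^{2\mu}+|C^*|^{2(1-\mu)}$ both become $|M|^{2\mu}+|M^*|^{2(1-\mu)}$, while $|B^*|^{2(1-\mu)}|C|^{2\mu}$ and $|C^*|^{2(1-\mu)}|B|^{2\mu}$ both become $|M^*|^{2(1-\mu)}|M|^{2\mu}$. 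Hence the right-hand side reduces to $\frac{\chi_1}{4}\norm{|M|^{2\mu}+|M^*|^{2(1-\mu)}}^2+\frac{\chi_2}{2}\norm{|M|^{2\mu}+|M^*|^{2(1-\mu)}}\omega^2\bra{|M^*|^{2(1-\mu)}|M|^{2\mu}}$.

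Next I would rewrite the left-hand side using Lemma~\ref{lemma5.1}(c): its ``in particular'' clause gives $\omega\bra{\begin{bmatrix} O & M \\ M & O \end{bmatrix}}=\omega(M)$, so raising both sides to the fourth power yields $\omega^4\bra{\begin{bmatrix} O & M \\ M & O \end{bmatrix}}=\omega^4(M)$. Combining this with the simplified right-hand side produces the first display of the corollary (where the sign should be read as ``$\leq$'', inherited from Theorem~\ref{Theorem 2.16}).

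For the special case $\alpha=2$ I would simply substitute into the definitions of $\chi_1$ and $\chi_2$. Since $\abs{\alpha-1}=1$ we have $\max\set{1,\abs{\alpha-1}^2}=\max\set{1,\abs{\alpha-1}}=1$, hence $\chi_1=\frac{\beta+(\beta+1)}{4(\beta+1)}=\frac{2\beta+1}{4(\beta+1)}$ and $\chi_2=\frac{1+2(\beta+1)}{4(\beta+1)}=\frac{2\beta+3}{4(\beta+1)}$; therefore $\frac{\chi_1}{4}=\frac{2\beta+1}{16(\beta+1)}$ and $\frac{\chi_2}{2}=\frac{2\beta+3}{8(\beta+1)}$, which is exactly the stated special-case inequality.

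I do not anticipate any real obstacle: all the analytic content is already contained in Theorem~\ref{Theorem 2.16}, and the corollary is essentially bookkeeping. The only step that merits an explicit sentence is the legitimacy of collapsing the three maxima, namely that $B=C=M$ genuinely identifies the paired operators; this is immediate since $|M|=\sqrt{M^*M}$ and $|M^*|=\sqrt{MM^*}$ are the only absolute values occurring, so the substitution is purely notational.
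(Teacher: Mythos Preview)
Your proposal is correct and matches the paper's own derivation exactly: the paper introduces this corollary with the sentence ``Letting $B=C=M$ in Theorem~\ref{Theorem 2.16} and using Lemma~\ref{lemma5.1}, we have,'' which is precisely your plan of collapsing the maxima under $B=C=M$ and invoking $\omega\bra{\begin{bmatrix} O & M \\ M & O \end{bmatrix}}=\omega(M)$. Your observation that the ``$=$'' in the first display should be ``$\leq$'' (inherited from Theorem~\ref{Theorem 2.16}) is also correct and appears to be a typographical slip in the paper.
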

The following Lemma is very useful in the sequel and it can be found in \cite[p. 148]{Drag5}.
\begin{lemma}\label{Drag} Let $a,b,e\in\h$ with $\norm{e}=1$. Then
$$\abs{\seq{a,e}}^2+\abs{\seq{e,b}}^2\leq \sqrt{\seq{a,a}^2+\seq{b,b}^2+2\abs{\seq{a,b}}^2}.$$
\end{lemma}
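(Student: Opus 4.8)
The plan is to realize the left–hand side as $\seq{Te,e}$ for a suitable positive operator $T$ of rank at most two, dominate it by $\norm{T}$, and then evaluate $\norm{T}$ as the largest eigenvalue of an explicit $2\times 2$ positive semidefinite matrix, for which the stated bound is a two–line computation.

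Concretely, set $T\in\bh$ by $Tx=\seq{x,a}a+\seq{x,b}b$. Then $T=UU^{*}$, where $U\colon\c^{2}\to\h$ is $U(\xi_{1},\xi_{2})=\xi_{1}a+\xi_{2}b$, so $T\geq 0$; and a one–line computation gives $\seq{Te,e}=\seq{e,a}\seq{a,e}+\seq{e,b}\seq{b,e}=\abs{\seq{a,e}}^{2}+\abs{\seq{e,b}}^{2}$. Since $T\geq 0$ and $\norm{e}=1$, this already yields $\abs{\seq{a,e}}^{2}+\abs{\seq{e,b}}^{2}\leq\norm{T}$. Next, $\norm{T}=\norm{UU^{*}}=\norm{U^{*}U}$, and $U^{*}U$ is the Hermitian positive semidefinite matrix $G=\begin{bmatrix}\seq{a,a}&\seq{b,a}\\ \seq{a,b}&\seq{b,b}\end{bmatrix}$, whose operator norm equals its largest eigenvalue $\lambda_{\max}(G)$. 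Hence $\abs{\seq{a,e}}^{2}+\abs{\seq{e,b}}^{2}\leq\lambda_{\max}(G)$.

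To finish, let $\lambda_{\max},\lambda_{\min}\geq 0$ denote the eigenvalues of $G$. Then $\lambda_{\max}^{2}\leq\lambda_{\max}^{2}+\lambda_{\min}^{2}=(\operatorname{tr}G)^{2}-2\det G$, and since $\operatorname{tr}G=\seq{a,a}+\seq{b,b}$ and $\det G=\seq{a,a}\seq{b,b}-\abs{\seq{a,b}}^{2}$ (which is $\geq 0$ by Cauchy–Schwarz~(\ref{CS1}), the same reason $G\geq 0$), one computes $(\operatorname{tr}G)^{2}-2\det G=\seq{a,a}^{2}+\seq{b,b}^{2}+2\abs{\seq{a,b}}^{2}$. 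Therefore $\lambda_{\max}(G)\leq\sqrt{\seq{a,a}^{2}+\seq{b,b}^{2}+2\abs{\seq{a,b}}^{2}}$, which combined with the previous step is exactly the asserted inequality.

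The computation is routine; the only places needing a word of care are the C$^{*}$-identity $\norm{UU^{*}}=\norm{U^{*}U}$, used to reduce the norm of the rank-$\leq 2$ operator $T$ to a finite eigenvalue problem, and the degenerate cases $a=0$, $b=0$, or $a,b$ linearly dependent, in which $G$ is still a well-defined positive semidefinite $2\times 2$ matrix and the bound persists (with equality in the linearly dependent case). I would also note in passing that the argument in fact produces the sharper estimate $\lambda_{\max}(G)=\tfrac12\bra{\seq{a,a}+\seq{b,b}}+\sqrt{\tfrac14\bra{\seq{a,a}-\seq{b,b}}^{2}+\abs{\seq{a,b}}^{2}}$, the stated inequality being its cleaner symmetric consequence.
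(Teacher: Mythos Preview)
Your proof is correct. The paper does not supply its own proof of this lemma; it simply quotes the result from \cite[p.~148]{Drag5}, so there is nothing to compare against directly. Your argument---realizing the left-hand side as $\seq{Te,e}$ for the rank-$\leq 2$ positive operator $T=UU^{*}$, reducing $\norm{T}$ to the top eigenvalue of the Gram matrix $G=U^{*}U$, and then bounding $\lambda_{\max}(G)$ via $\lambda_{\max}^{2}\leq\lambda_{\max}^{2}+\lambda_{\min}^{2}=(\operatorname{tr}G)^{2}-2\det G$---is clean and self-contained, and your parenthetical remark that one actually obtains the sharper closed form for $\lambda_{\max}(G)$ is a nice bonus.
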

\begin{theorem}\label{KZ} Let $T,S,X,Y\in\bh$, $\alpha\in\c\setminus\{0\}$ and $\beta\geq 0$. Then
\begin{eqnarray}
  \omega^4\bra{\begin{bmatrix} T& X\\ Y& S \\\end{bmatrix}}  &\leq& \bra{2+4\chi_1}\max\set{\norm{|T|^4+|X^*|^4},\norm{|S|^4+|Y^*|^4}}
  +2\omega^2\bra{\begin{bmatrix} O& XS\\ YT& O \\\end{bmatrix}}\nonumber\\
   &+&4\chi_2\max\set{\norm{|T|^2+|X^*|^2},\norm{|S|^2+|Y^*|^2}}w\bra{\begin{bmatrix} O& XS\\ YT& O \\\end{bmatrix}},
\end{eqnarray}
 where $\chi_1=\frac{\beta+(\beta+1)\max\set{1,\abs{\alpha-1}^2}}{\abs{\alpha}^2(\beta+1)}$ and
$\chi_2=\frac{1+2(\beta+1)\max\set{1,\abs{\alpha-1}}}{\abs{\alpha}^2(\beta+1)}$.
\end{theorem}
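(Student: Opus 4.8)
The plan is to split the matrix into its diagonal and off-diagonal parts and route the estimate through the two Buzano-type lemmas. Write $\T=\begin{bmatrix}T&X\\Y&S\end{bmatrix}=U+V$, where $U=\begin{bmatrix}T&O\\O&S\end{bmatrix}$ and $V=\begin{bmatrix}O&X\\Y&O\end{bmatrix}$. The crucial observation is that
\[
  VU=\begin{bmatrix}O&XS\\YT&O\end{bmatrix},
\]
which is exactly the off-diagonal matrix appearing on the right-hand side, and that $U^*U=\begin{bmatrix}|T|^2&O\\O&|S|^2\end{bmatrix}$ and $VV^*=\begin{bmatrix}|X^*|^2&O\\O&|Y^*|^2\end{bmatrix}$ are the positive block-diagonal operators whose squares and sum reproduce the norm quantities $M_1:=\max\{\norm{|T|^4+|X^*|^4},\norm{|S|^4+|Y^*|^4}\}$ and $M_2:=\max\{\norm{|T|^2+|X^*|^2},\norm{|S|^2+|Y^*|^2}\}$. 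Fixing a unit vector $\x\in\hh$ and setting $p=\abs{\seq{U\x,\x}}$, $q=\abs{\seq{V\x,\x}}$, I would record the identities $\seq{\x,V^*\x}=\seq{V\x,\x}$ and $\seq{U\x,V^*\x}=\seq{VU\x,\x}$, so that the triple $a=U\x$, $b=V^*\x$, $e=\x$ feeds both Buzano lemmas with the cross term $\seq{VU\x,\x}$ already built in.

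First I would reduce the quartic. By the triangle inequality $\abs{\seq{\T\x,\x}}\leq p+q$, and raising to the fourth power and applying an elementary inequality of the type $(p+q)^4\leq c_1(p^2+q^2)^2+c_2\,p^2q^2$ separates a sum-of-squares contribution $(p^2+q^2)^2$ from a product contribution $p^2q^2$. These two pieces are tailor-made for the two lemmas: applying Lemma \ref{Drag} with $(a,b,e)=(U\x,V^*\x,\x)$ gives
\[
  (p^2+q^2)^2\leq\norm{U\x}^4+\norm{V^*\x}^4+2\abs{\seq{VU\x,\x}}^2,
\]
which is parameter-free and produces the $\omega^2(VU)$ term together with the $\chi$-free part of the norm terms, while applying Lemma \ref{Mix-al-be} with the same triple gives
\[
  p^2q^2\leq\chi_1\norm{U\x}^2\norm{V^*\x}^2+\chi_2\norm{U\x}\,\norm{V^*\x}\,\abs{\seq{VU\x,\x}},
\]
which carries the constants $\chi_1,\chi_2$ and produces the first-power $\omega(VU)$ term.

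Next I would convert the norm factors into the positive operators $P=U^*U$ and $Q=VV^*$ via $\norm{U\x}^2=\seq{P\x,\x}$ and $\norm{V^*\x}^2=\seq{Q\x,\x}$. The H\"older--McCarthy inequality (Lemma \ref{Holder}) gives $\seq{P\x,\x}^2\leq\seq{P^2\x,\x}$ and $\seq{Q\x,\x}^2\leq\seq{Q^2\x,\x}$, hence $\norm{U\x}^4+\norm{V^*\x}^4\leq\seq{(P^2+Q^2)\x,\x}$ with $P^2+Q^2=\mathrm{diag}(|T|^4+|X^*|^4,\,|S|^4+|Y^*|^4)$; the arithmetic--geometric mean inequality turns $\norm{U\x}^2\norm{V^*\x}^2$ and $\norm{U\x}\,\norm{V^*\x}$ into multiples of $\seq{(P^2+Q^2)\x,\x}$ and $\seq{(P+Q)\x,\x}$ respectively, where $P+Q=\mathrm{diag}(|T|^2+|X^*|^2,\,|S|^2+|Y^*|^2)$. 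Taking the supremum over unit $\x$ and using that a positive block-diagonal operator has norm equal to the maximum of its block norms (so $\sup_\x\seq{(P^2+Q^2)\x,\x}=M_1$ and $\sup_\x\seq{(P+Q)\x,\x}=M_2$), while $\sup_\x\abs{\seq{VU\x,\x}}=\omega(VU)$, assembles the three advertised terms; Lemma \ref{lemma5.1} identifies the left-hand supremum as $\omega^4(\T)$.

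The main obstacle is the clean constant bookkeeping in the final assembly. The reduction must avoid any geometric-mean cross term, which is precisely why the split into $(p^2+q^2)^2$ and $p^2q^2$, rather than a direct expansion of $(p+q)^4$, is essential; and the operator-norm terms must be pushed through H\"older--McCarthy \emph{before} combining $T$ and $X^*$, so that the sharp expression $|T|^4+|X^*|^4$ appears in place of the weaker $(|T|^2+|X^*|^2)^2$. Tracking the numerical constants through the triangle step, the two Buzano-type lemmas, and the two arithmetic--geometric mean steps is exactly what pins down the coefficients $2+4\chi_1$, $2$, and $4\chi_2$.
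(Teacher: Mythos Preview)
Your proposal is correct and follows essentially the same route as the paper's own proof: the diagonal/off-diagonal split $\T=U+V$ with $VU=\begin{bmatrix}O&XS\\YT&O\end{bmatrix}$, the reduction of $(p+q)^4$ to a $(p^2+q^2)^2$-part handled by Lemma~\ref{Drag} and a $p^2q^2$-part handled by Lemma~\ref{Mix-al-be}, and the final conversion via H\"older--McCarthy and AM--GM into the block-diagonal norms. The only cosmetic difference is the order in which you apply the convexity step and Lemma~\ref{Drag} (you split first, then apply Drag; the paper applies Drag inside the bracket, then splits by convexity of $t^2$), which leads to the same inequality $2(p^2+q^2)^2+8p^2q^2$; also, the sentence ``Lemma~\ref{lemma5.1} identifies the left-hand supremum as $\omega^4(\T)$'' is a slight misattribution, since that identity is just the definition of $\omega$, while Lemma~\ref{lemma5.1} is what you actually use for the block-diagonal norm maxima $M_1,M_2$.
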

\begin{proof} Let  $\x\in\hh$ be any unit vector, $\chi_1=\frac{\beta+(\beta+1)\max\set{1,\abs{\alpha-1}^2}}{\abs{\alpha}^2(\beta+1)}$,
$\chi_2=\frac{1+2(\beta+1)\max\set{1,\abs{\alpha-1}}}{\abs{\alpha}^2(\beta+1)}$, and let $\T=\begin{bmatrix} T& X\\ Y& S \\\end{bmatrix}$,
$M=\begin{bmatrix} O& X\\ Y& O\\\end{bmatrix}$, $P=\begin{bmatrix} T& O\\ O& S \\\end{bmatrix}$ and $R=\begin{bmatrix} O& XS\\ YT& O \\\end{bmatrix}$.
Then $MP=R$,
$$|P|^4+|M^*|^4=\begin{bmatrix} |T|^4+|X^*|^4& O\\ O& |S|^4+|Y^*|^4 \\\end{bmatrix}\,\,\mbox{and}\,\,|P|^2+|M^*|^2
=\begin{bmatrix} |T|^2+|X^*|^2& O\\ O& |S|^2+|Y^*|^2 \\\end{bmatrix} .$$
Therefore,
\begin{eqnarray*}
  \abs{\seq{\T\x,\x}}^4 &=& \abs{\seq{P\x,\x}+\seq{\x,M^*\x}}^4\leq \bra{\abs{\seq{P\x,\x}}+\abs{\seq{\x,M^*\x}}}^4 \\
   &=&\bra{\abs{\seq{P\x,\x}}^2+\abs{\seq{\x,M^*\x}}^2+2\abs{\seq{P\x,\x}\seq{\x,M^*\x}}}^2\\
   &\leq&\bra{\sqrt{\seq{P\x,P\x}^2+\seq{M^*\x,M^*\x}^2+2\abs{\seq{P\x,M^*\x}}^2}+2\abs{\seq{P\x,\x}\seq{\x,M^*\x}}}^2\\
   &&\bra{\mbox{by Lemma \ref{Drag}}}\\
   &\leq& 2\bra{\seq{P\x,P\x}^2+\seq{M^*\x,M^*\x}^2+2\abs{\seq{P\x,M^*\x}}^2}+8\abs{\seq{P\x,\x}\seq{\x,M^*\x}}^2\\
   &&\bra{\mbox{by the convexity of the function $f(t)=t^2$}}\\
   &\leq& 2\bra{\seq{P\x,P\x}^2+\seq{M^*\x,M^*\x}^2+2\abs{\seq{P\x,M^*\x}}^2}\\
   &+&8\bra{\chi_1\norm{P\x}^2\norm{M^*\x}^2
   +\chi_2\norm{P\x}\norm{M^*\x}\abs{\seq{P\x,M^*\x}}}\\
   &&\bra{\mbox{by Lemma \ref{Mix-al-be}}}\\
   &\leq&2\bra{\seq{\bra{|P|^4+|M^*|^4}\x,\x}+\abs{\seq{MP\x,\x}}^2}\\
   &+& 8\bra{\frac{\chi_1}{2}\seq{\bra{|P|^4+|M^*|^4}\x,\x}+\frac{\chi_2}{2}\seq{\bra{|P|^2+|M^*|^2}\x,\x}\abs{\seq{MP\x,\x}}}\\
   &&\bra{\mbox{by the arithmetic-geometric inequality  mean}}\\
   &=& \bra{2+4\chi_1}\seq{\bra{|P|^4+|M^*|^4}\x,\x}+4\chi_2\seq{\bra{|P|^2+|M^*|^2}\x,\x}\abs{\seq{MP\x,\x}}+2\abs{\seq{MP\x,\x}}^2\\
   &\leq&\bra{2+4\chi_1}\norm{|P|^4+|M^*|^4}+4\chi_2\norm{|P|^2+|M^*|^2}\omega(MP)+2\omega^2(MP).
\end{eqnarray*}
By taking the supremum over all vectors of $\x\in\hh$ and using Lemma \ref{lemma5.1}, we obtain
\begin{eqnarray*}
  \omega^4(\T) &\leq& \bra{2+4\chi_1}\max\set{\norm{|T|^4+|X^*|^4},\norm{|S|^4+|Y^*|^4}}\\
   &+&4\chi_2\max\set{\norm{|T|^2+|X^*|^2},\norm{|S|^2+|Y^*|^2}}\omega(R)+2\omega^2(R).
\end{eqnarray*}
\end{proof}
  Now, we give some special cases of Theorem \ref{KZ}.
  \begin{corollary}\label{cor2.31} Let $T\in\bh$, $\alpha\in\c\setminus\{0\}$ and $\beta\geq 0$. Then
    \begin{equation}\label{KZ1}
      \omega^4(T)\leq \bra{\frac{2+4\chi_1}{16}}\norm{|T|^4+|T^*|^4}+
      \frac{\chi_2}{4}\norm{|T|^2+|T^*|^2}\omega(T^2)+\frac{1}{8}\omega^2(T^2),
    \end{equation}
    where $\chi_1=\frac{\beta+(\beta+1)\max\set{1,\abs{\alpha-1}^2}}{\abs{\alpha}^2(\beta+1)}$ and
$\chi_2=\frac{1+2(\beta+1)\max\set{1,\abs{\alpha-1}}}{\abs{\alpha}^2(\beta+1)}$. In particular,
\begin{equation}\label{KZ2}
      \omega^4(T)\leq \bra{\frac{4\beta+3}{32(\beta+1)}}\norm{|T|^4+|T^*|^4}+\bra{\frac{2\beta+3}{16(\beta+1)}}\norm{|T|^2+|T^*|^2}\omega(T^2)+\frac{1}{8}\omega^2(T^2).
    \end{equation}
  \end{corollary}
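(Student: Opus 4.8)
The plan is to obtain Corollary~\ref{cor2.31} as the diagonal specialization $T=S=X=Y$ of Theorem~\ref{KZ}, i.e.\ to apply that theorem to the $2\times2$ block operator $\begin{bmatrix} T & T\\ T & T\end{bmatrix}$ acting on $\hh$. The point of this substitution is that both sides of the inequality simplify sharply: the left-hand side becomes a fixed scalar multiple of $\omega^4(T)$, and the auxiliary operator $R$ becomes an off-diagonal block matrix built from $T^2$, whose numerical radius is computable.

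First I would evaluate the left-hand side. Applying Lemma~\ref{lemma5.1}(c) with its two operator arguments both equal to $T$ gives $\omega\bra{\begin{bmatrix} T & T\\ T & T\end{bmatrix}}=\max\set{\omega(T-T),\omega(T+T)}=2\,\omega(T)$, so the left-hand side of the conclusion of Theorem~\ref{KZ} equals $2^4\omega^4(T)=16\,\omega^4(T)$. Next I would simplify the right-hand side under $T=S=X=Y$: one has $|T|^4+|X^*|^4=|S|^4+|Y^*|^4=|T|^4+|T^*|^4$ and likewise $|T|^2+|X^*|^2=|S|^2+|Y^*|^2=|T|^2+|T^*|^2$, so the two maxima over the diagonal blocks collapse to $\norm{|T|^4+|T^*|^4}$ and $\norm{|T|^2+|T^*|^2}$ respectively. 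Moreover $R=\begin{bmatrix} O & XS\\ YT & O\end{bmatrix}=\begin{bmatrix} O & T^2\\ T^2 & O\end{bmatrix}$, and the ``in particular'' clause of Lemma~\ref{lemma5.1}(c) gives $\omega(R)=\omega(T^2)$. Substituting these identities into the conclusion of Theorem~\ref{KZ} and dividing by $16$ produces the inequality (\ref{KZ1}).

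For the special case (\ref{KZ2}) I would then put $\alpha=2$, so that $\max\set{1,\abs{\alpha-1}}=\max\set{1,\abs{\alpha-1}^2}=1$ and hence $\chi_1=\frac{2\beta+1}{4(\beta+1)}$ and $\chi_2=\frac{2\beta+3}{4(\beta+1)}$; substituting into (\ref{KZ1}) and simplifying the coefficients $\frac{2+4\chi_1}{16}$ and $\frac{\chi_2}{4}$ yields the stated constants. I do not anticipate any genuine obstacle: the one point that needs attention is to reduce the off-diagonal $2\times2$ blocks via Lemma~\ref{lemma5.1}(c) rather than via the crude bound $\omega(\cdot)\le\norm{\cdot}$, since it is exactly this that retains the refined quantity $\omega(T^2)$ (instead of a norm of $T^2$) in the final estimate and makes Corollary~\ref{cor2.31} a true sharpening.
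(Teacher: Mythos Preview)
Your proposal is correct and follows essentially the same route as the paper: specialize Theorem~\ref{KZ} to $T=S=X=Y$, use Lemma~\ref{lemma5.1}(c) to identify $\omega\bra{\begin{bmatrix}T&T\\T&T\end{bmatrix}}=2\omega(T)$ and $\omega\bra{\begin{bmatrix}O&T^2\\T^2&O\end{bmatrix}}=\omega(T^2)$, divide by $16$, and then set $\alpha=2$ for the particular case. Your remark about using Lemma~\ref{lemma5.1}(c) rather than the crude bound $\omega(\cdot)\le\norm{\cdot}$ to retain $\omega(T^2)$ is a nice observation that the paper leaves implicit.
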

  \begin{proof} By Theorem \ref{KZ}, we have
  \begin{eqnarray*}
    \omega^4\bra{\begin{bmatrix} T& T\\ T& T \\\end{bmatrix}} &\leq& \bra{2+4\chi_1}\max\set{\norm{|T|^4+|T^*|^4},\norm{|T|^4+|T^*|^4}}\\
   &+&4\chi_2\max\set{\norm{|T|^2+|T^*|^2},\norm{|T|^2+|T^*|^2}}w\bra{\begin{bmatrix} O& T^2\\ T^2&O\\\end{bmatrix}}+2\omega^2\bra{\begin{bmatrix} O& T^2\\ T^2&O\\\end{bmatrix}}.
  \end{eqnarray*}
  Hence, by Lemma \ref{lemma5.1}, it follows that
\begin{eqnarray*}
  16\omega^4(T) &\leq& \bra{2+4\chi_1}\norm{|T|^4+|T^*|^4}+4\chi_2\norm{|T|^2+|T^*|^2}w\bra{T^2}+2\omega^2\bra{T^2}.
\end{eqnarray*}
which gives the inequality (\ref{KZ1}). The inequality (\ref{KZ2}) also follows from (\ref{KZ1}) by letting $\alpha=2$.
  \end{proof}
\begin{remark} For $T\in\bh$, El-Haddad and Kittaneh in \cite[Theorem 2]{Had-Kit}obtained the following result:
\begin{equation}\label{KZ3}
  \omega^4(T)\leq \frac{1}{2}\norm{|T|^4+|T^*|^4}.
\end{equation}
A refinement of the inequality (\ref{KZ3}) has been established in  \cite[Remark 3.2]{Dom-kit2}. This refinement asserts that
\begin{equation}\label{KZ4}
  \omega^4(T)\leq \frac{3}{8}\norm{|T|^4+|T^*|^4}+\frac{1}{8}\norm{|T|^2+|T^*|^2}\omega(T^2).
\end{equation}
It can be checked that the inequality (\ref{KZ2}) is a refinement of the inequality (\ref{KZ4}) for any $\beta\geq 0$. In fact,
\begin{eqnarray*}
  \omega^4(T) &\leq&\bra{\frac{4\beta+3}{32(\beta+1)}}\norm{|T|^4+|T^*|^4}+\bra{\frac{2\beta+3}{16(\beta+1)}}\norm{|T|^2+|T^*|^2}\omega(T^2)+\frac{1}{8}\omega^2(T^2) \\
  &\leq& \bra{\frac{4\beta+3}{32(\beta+1)}}\norm{|T|^4+|T^*|^4}+\bra{\frac{2\beta+3}{16(\beta+1)}}\norm{|T|^2+|T^*|^2}\omega^2(T)+\frac{1}{8}\omega(T^2)\omega(T^2)\\
  &&\bra{\mbox{by the  power inequality for numerical radius}}\\
  &\leq& \bra{\frac{4\beta+3}{32(\beta+1)}}\norm{|T|^4+|T^*|^4}+\bra{\frac{2\beta+3}{32(\beta+1)}}\norm{|T|^2+|T^*|^2}^2+\frac{1}{16}\norm{|T|^2+|T^*|^2}\omega(T^2)\\
  &&\bra{\mbox{by the  inequality (\ref{Ineq1.2})}}\\
  &\leq&\bra{\frac{4\beta+3}{32(\beta+1)}}\norm{|T|^4+|T^*|^4}+\bra{\frac{2\beta+3}{32(\beta+1)}}\norm{|T|^4+|T^*|^4}+\frac{1}{16}\norm{|T|^2+|T^*|^2}\omega(T^2)\\
  &&\bra{\mbox{by Lemma \ref{lemma2.2} }}\\
  &=&\frac{3}{16}\norm{|T|^4+|T^*|^4}+\frac{1}{16}\norm{|T|^2+|T^*|^2}\omega(T^2)\\
  &<& \frac{3}{8}\norm{|T|^4+|T^*|^4}+\frac{1}{8}\norm{|T|^2+|T^*|^2}\omega(T^2).
\end{eqnarray*}
In addition, it is see that from the above discussion the inequality (\ref{KZ2}) is a refinement of the inequality (\ref{KZ3}). Indeed,
\begin{eqnarray*}
  \omega^4(T) &\leq&\frac{3}{16}\norm{|T|^4+|T^*|^4}+\frac{1}{16}\norm{|T|^2+|T^*|^2}\omega(T^2) \\
  &\leq& \frac{3}{16}\norm{|T|^4+|T^*|^4}+\frac{1}{16}\norm{|T|^2+|T^*|^2}\omega^2(T)\,\bra{\mbox{by the  power inequality for numerical radius}}\\
  &\leq& \frac{3}{16}\norm{|T|^4+|T^*|^4}+\frac{1}{32}\norm{|T|^2+|T^*|^2}^2\bra{\mbox{by the inequality (\ref{Ineq1.2})}}\\
  &\leq&\frac{3}{16}\norm{|T|^4+|T^*|^4}+\frac{1}{32}\norm{|T|^4+|T^*|^4}\\
  &=& \frac{7}{32}\norm{|T|^4+|T^*|^4}\\
  &<& \frac{1}{2}\norm{|T|^4+|T^*|^4}.
\end{eqnarray*}
\end{remark}

\begin{theorem}\label{Modified KZ}  Let $T,S,X,Y\in\bh$, $\mu\in [0,1]$, $\alpha\in\c\setminus\{0\}$ and $\beta\geq 0$. Then
\begin{eqnarray}
 \omega^4\bra{\begin{bmatrix} T& X\\ Y& S \\\end{bmatrix}}
   &\leq& \bra{2+2\chi_1+2\chi_3}\max\set{\norm{|T|^4+|X^*|^4},\norm{|S|^4+|Y^*|^4}}\nonumber\\
     &+& (2\chi_2+2\mu\chi_4)\max\set{\norm{|T|^2+|X^*|^2},\norm{|S|^2+|Y^*|^2}}\omega\bra{\begin{bmatrix} O& XS\\ YT& O \\\end{bmatrix}}\nonumber\\
     &+& \bra{4+4(1-\mu)\chi_4}\omega^2\bra{\begin{bmatrix} O& XS\\ YT& O \\\end{bmatrix}},
\end{eqnarray}
 where $\chi_1=\frac{\beta+(\beta+1)\max\set{1,\abs{\alpha-1}^2}}{\abs{\alpha}^2(\beta+1)}$,
$\chi_2=\frac{1+2(\beta+1)\max\set{1,\abs{\alpha-1}}}{\abs{\alpha}^2(\beta+1)}$,
 $\chi_3=\frac{2\beta+2(\beta+1)\max\set{1,\abs{\alpha-1}^2}}{\abs{\alpha}^2(\beta+1)}$
 and $\chi_4=\frac{2}{\abs{\alpha}^2(\beta+1)}$.
\end{theorem}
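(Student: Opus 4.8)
The plan is to follow the strategy of the proof of Theorem~\ref{KZ}, but to treat the cross term $\abs{\seq{P\x,\x}\seq{\x,M^*\x}}^2$ by splitting it into two equal halves and estimating one half via the Buzano-type Lemma~\ref{Mix-al-be} and the other via the Buzano-type Lemma~\ref{Ramadan-Kareem1}, and then to use the parameter $\mu\in[0,1]$ to redistribute the resulting $\abs{\seq{R\x,\x}}^2$ term between a piece proportional to $\omega(R)$ and a piece proportional to $\omega^2(R)$. Concretely, put $\T=\begin{bmatrix} T& X\\ Y& S \\\end{bmatrix}$, $M=\begin{bmatrix} O& X\\ Y& O\\\end{bmatrix}$, $P=\begin{bmatrix} T& O\\ O& S \\\end{bmatrix}$ and $R=MP=\begin{bmatrix} O& XS\\ YT& O \\\end{bmatrix}$, so that $\T=P+M$, while $\abs{P}^4+\abs{M^*}^4$ and $\abs{P}^2+\abs{M^*}^2$ are block-diagonal with diagonal entries $|T|^4+|X^*|^4,\ |S|^4+|Y^*|^4$ and $|T|^2+|X^*|^2,\ |S|^2+|Y^*|^2$, respectively. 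For a unit vector $\x\in\hh$, write $\seq{\T\x,\x}=\seq{P\x,\x}+\seq{\x,M^*\x}$; applying the triangle inequality, squaring, Lemma~\ref{Drag} to $\abs{\seq{P\x,\x}}^2+\abs{\seq{\x,M^*\x}}^2$, and the convexity of $t\mapsto t^2$ gives, exactly as in Theorem~\ref{KZ},
\begin{equation*}
\abs{\seq{\T\x,\x}}^4\leq 2\bra{\norm{P\x}^4+\norm{M^*\x}^4+2\abs{\seq{P\x,M^*\x}}^2}+8\abs{\seq{P\x,\x}\seq{\x,M^*\x}}^2 .
\end{equation*}

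The new step is then to write $8\abs{\seq{P\x,\x}\seq{\x,M^*\x}}^2=4\abs{\seq{P\x,\x}\seq{\x,M^*\x}}^2+4\abs{\seq{P\x,\x}\seq{\x,M^*\x}}^2$, bound the first half by Lemma~\ref{Mix-al-be} (with $a=P\x$, $b=M^*\x$, $e=\x$), which gives $4\chi_1\norm{P\x}^2\norm{M^*\x}^2+4\chi_2\norm{P\x}\norm{M^*\x}\abs{\seq{P\x,M^*\x}}$, and the second half by Lemma~\ref{Ramadan-Kareem1}, which gives $4\chi_3\norm{P\x}^2\norm{M^*\x}^2+4\chi_4\abs{\seq{P\x,M^*\x}}^2$. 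Now apply the H\"older--McCarthy inequality (Lemma~\ref{Holder}) to pass from $\norm{P\x}^4=\seq{|P|^2\x,\x}^2$ to $\seq{|P|^4\x,\x}$ (and similarly for $M^*$) and from $\norm{P\x}^2\norm{M^*\x}^2=\seq{|P|^2\x,\x}\seq{|M^*|^2\x,\x}$ to $\seq{|P|^4\x,\x}^{1/2}\seq{|M^*|^4\x,\x}^{1/2}$, then apply the arithmetic--geometric mean inequality to bound the latter product by $\tfrac12\seq{(|P|^4+|M^*|^4)\x,\x}$ and $\norm{P\x}\norm{M^*\x}$ by $\tfrac12\seq{(|P|^2+|M^*|^2)\x,\x}$; also $\seq{P\x,M^*\x}=\seq{MP\x,\x}=\seq{R\x,\x}$. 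Collecting everything, the coefficient of $\seq{(|P|^4+|M^*|^4)\x,\x}$ becomes $2+2\chi_1+2\chi_3$, and what is left over equals $2\chi_2\seq{(|P|^2+|M^*|^2)\x,\x}\abs{\seq{R\x,\x}}+(4+4\chi_4)\abs{\seq{R\x,\x}}^2$.

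For the last term, since $\mu\in[0,1]$ one may write $4\chi_4\abs{\seq{R\x,\x}}^2=4\mu\chi_4\abs{\seq{R\x,\x}}^2+4(1-\mu)\chi_4\abs{\seq{R\x,\x}}^2$, and in the $4\mu\chi_4$-piece estimate one factor by $\abs{\seq{R\x,\x}}=\abs{\seq{P\x,M^*\x}}\leq\norm{P\x}\norm{M^*\x}\leq\tfrac12\seq{(|P|^2+|M^*|^2)\x,\x}$, so that $4\mu\chi_4\abs{\seq{R\x,\x}}^2\leq 2\mu\chi_4\seq{(|P|^2+|M^*|^2)\x,\x}\abs{\seq{R\x,\x}}$. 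This promotes the coefficient in front of $\seq{(|P|^2+|M^*|^2)\x,\x}\abs{\seq{R\x,\x}}$ to $2\chi_2+2\mu\chi_4$ and leaves $4+4(1-\mu)\chi_4$ in front of $\abs{\seq{R\x,\x}}^2$. It then remains to use, for the fixed unit $\x$, that $\seq{(|P|^4+|M^*|^4)\x,\x}\leq\omega(|P|^4+|M^*|^4)=\max\set{\norm{|T|^4+|X^*|^4},\norm{|S|^4+|Y^*|^4}}$ (by positivity together with Lemma~\ref{lemma5.1}(a)), that $\seq{(|P|^2+|M^*|^2)\x,\x}\leq\max\set{\norm{|T|^2+|X^*|^2},\norm{|S|^2+|Y^*|^2}}$, and that $\abs{\seq{R\x,\x}}\leq\omega(R)$; taking the supremum over all unit $\x\in\hh$ then yields the asserted inequality.

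The analytic ingredients --- the triangle inequality, Lemma~\ref{Drag}, the convexity of $t^2$, the H\"older--McCarthy and arithmetic--geometric mean inequalities, and the block-diagonal identifications --- are all routine once the decomposition is fixed; I expect the only delicate point to be the bookkeeping of constants, i.e.\ verifying that the two Buzano-type estimates applied to the two halves, followed by the $\mu$-redistribution, assemble to exactly $2+2\chi_1+2\chi_3$, $2\chi_2+2\mu\chi_4$ and $4+4(1-\mu)\chi_4$, and noting that the hypothesis $\mu\in[0,1]$ is precisely what makes the redistribution a legitimate convex split.
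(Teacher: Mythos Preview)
Your proposal is correct and follows essentially the same approach as the paper's proof: the same decomposition $\T=P+M$, the same starting inequality from Theorem~\ref{KZ}, the same splitting of $8\abs{\seq{P\x,\x}\seq{\x,M^*\x}}^2$ into two halves handled by Lemmas~\ref{Mix-al-be} and~\ref{Ramadan-Kareem1} respectively, and the same $\mu$-redistribution of the $4\chi_4\abs{\seq{R\x,\x}}^2$ term via $\abs{\seq{R\x,\x}}\leq\norm{P\x}\norm{M^*\x}$. The only cosmetic difference is that the paper applies the $\mu$-split before the final AM--GM step rather than after, but the bookkeeping of constants comes out identical.
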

\begin{proof} Let \(\mathbf{x} = \begin{bmatrix} x_1 \\ x_2 \end{bmatrix} \in \mathcal{H} \oplus \mathcal{H}\) be a unit vector. Define:
\[
\mathbb{T} = \begin{bmatrix} T & X \\ Y & S \end{bmatrix}, \quad \mathbb{M} = \begin{bmatrix} O & X \\ Y & O \end{bmatrix}, \quad \mathbb{P} = \begin{bmatrix} T & O \\ O & S \end{bmatrix}, \quad \mathbb{E} = \begin{bmatrix} O & XS \\ YT & O \end{bmatrix}.
\]
Note that \(\mathbb{M}\mathbb{P} = \mathbb{E}\), and
\[
|\mathbb{P}|^{4} + |\mathbb{M}^{*}|^{4} = \begin{bmatrix} |T|^{4} + |X^{*}|^{4} & O \\ O & |S|^{4} + |Y^{*}|^{4} \end{bmatrix}, \quad
|\mathbb{P}|^{2} + |\mathbb{M}^{*}|^{2} = \begin{bmatrix} |T|^{2} + |X^{*}|^{2} & O \\ O & |S|^{2} + |Y^{*}|^{2} \end{bmatrix}.
\]

From the proof of Theorem \ref{KZ}, we have
\[
|\langle \mathbb{T} \mathbf{x}, \mathbf{x} \rangle|^{4} \leq 2 \left( \langle \mathbb{P} \mathbf{x}, \mathbb{P} \mathbf{x} \rangle^{2} + \langle \mathbb{M}^{*} \mathbf{x}, \mathbb{M}^{*} \mathbf{x} \rangle^{2} + 2 |\langle \mathbb{P} \mathbf{x}, \mathbb{M}^{*} \mathbf{x} \rangle|^{2} \right) + 8 |\langle \mathbb{P} \mathbf{x}, \mathbf{x} \rangle \langle \mathbf{x}, \mathbb{M}^{*} \mathbf{x} \rangle|^{2}.
\]

Split the last term as
\[
8 |\langle \mathbb{P} \mathbf{x}, \mathbf{x} \rangle \langle \mathbf{x}, \mathbb{M}^{*} \mathbf{x} \rangle|^{2} = 4 |\langle \mathbb{P} \mathbf{x}, \mathbf{x} \rangle \langle \mathbf{x}, \mathbb{M}^{*} \mathbf{x} \rangle|^{2} + 4 |\langle \mathbb{P} \mathbf{x}, \mathbf{x} \rangle \langle \mathbf{x}, \mathbb{M}^{*} \mathbf{x} \rangle|^{2}.
\]

Apply Lemma \ref{Mix-al-be}  to the first part, we get
\[
4 |\langle \mathbb{P} \mathbf{x}, \mathbf{x} \rangle \langle \mathbf{x}, \mathbb{M}^{*} \mathbf{x} \rangle|^{2} \leq 4 \left( \chi_{1} \| \mathbb{P} \mathbf{x} \|^{2} \| \mathbb{M}^{*} \mathbf{x} \|^{2} + \chi_{2} \| \mathbb{P} \mathbf{x} \| \| \mathbb{M}^{*} \mathbf{x} \| |\langle \mathbb{P} \mathbf{x}, \mathbb{M}^{*} \mathbf{x} \rangle| \right).
\]

Apply Lemma \ref{Ramadan-Kareem1} to the second part, we obtain
\[
4 |\langle \mathbb{P} \mathbf{x}, \mathbf{x} \rangle \langle \mathbf{x}, \mathbb{M}^{*} \mathbf{x} \rangle|^{2} \leq 4 \left( \chi_{3} \| \mathbb{P} \mathbf{x} \|^{2} \| \mathbb{M}^{*} \mathbf{x} \|^{2} + \chi_{4} |\langle \mathbb{P} \mathbf{x}, \mathbb{M}^{*} \mathbf{x} \rangle|^{2} \right).
\]
Since \(\chi_3 = 2\chi_1\), this becomes
\[
4 \left( 2\chi_{1} \| \mathbb{P} \mathbf{x} \|^{2} \| \mathbb{M}^{*} \mathbf{x} \|^{2} + \chi_{4} |\langle \mathbb{P} \mathbf{x}, \mathbb{M}^{*} \mathbf{x} \rangle|^{2} \right).
\]

Combine both parts implies
\begin{eqnarray*}
  8 |\langle \mathbb{P} \mathbf{x}, \mathbf{x} \rangle \langle \mathbf{x}, \mathbb{M}^{*} \mathbf{x} \rangle|^{2}  &\leq&
  4\chi_{1} \| \mathbb{P} \mathbf{x} \|^{2} \| \mathbb{M}^{*} \mathbf{x} \|^{2} + 4\chi_{2} \| \mathbb{P} \mathbf{x} \| \| \mathbb{M}^{*} \mathbf{x} \| |\langle \mathbb{E} \mathbf{x}, \mathbf{x} \rangle| \\
   &+& 8\chi_{1} \| \mathbb{P} \mathbf{x} \|^{2} \| \mathbb{M}^{*} \mathbf{x} \|^{2} + 4\chi_{4} |\langle \mathbb{E} \mathbf{x}, \mathbf{x} \rangle|^{2}.
\end{eqnarray*}
Simplify, we obtain
\[
= (12\chi_{1}) \| \mathbb{P} \mathbf{x} \|^{2} \| \mathbb{M}^{*} \mathbf{x} \|^{2} + 4\chi_{2} \| \mathbb{P} \mathbf{x} \| \| \mathbb{M}^{*} \mathbf{x} \| |\langle \mathbb{E} \mathbf{x}, \mathbf{x} \rangle| + 4\chi_{4} |\langle \mathbb{E} \mathbf{x}, \mathbf{x} \rangle|^{2}.
\]

Decompose the last term using \(\mu\) yields
\[
4\chi_{4} |\langle \mathbb{E} \mathbf{x}, \mathbf{x} \rangle|^{2} = 4\mu\chi_{4} |\langle \mathbb{E} \mathbf{x}, \mathbf{x} \rangle|^{2} + 4(1-\mu)\chi_{4} |\langle \mathbb{E} \mathbf{x}, \mathbf{x} \rangle|^{2}.
\]
By the Cauchy-Schwarz inequality, we have
\[
|\langle \mathbb{P} \mathbf{x}, \mathbb{M}^{*} \mathbf{x} \rangle| \leq \| \mathbb{P} \mathbf{x} \| \| \mathbb{M}^{*} \mathbf{x} \|.
\]
So,
\[
4\mu\chi_{4} |\langle \mathbb{E} \mathbf{x}, \mathbf{x} \rangle|^{2} \leq 4\mu\chi_{4} \| \mathbb{P} \mathbf{x} \| \| \mathbb{M}^{*} \mathbf{x} \| \cdot |\langle \mathbb{E} \mathbf{x}, \mathbf{x} \rangle|.
\]
Substitute, we obtain
\begin{eqnarray*}
  8 |\langle \mathbb{P} \mathbf{x}, \mathbf{x} \rangle \langle \mathbf{x}, \mathbb{M}^{*} \mathbf{x} \rangle|^{2} &\leq&
  12\chi_{1} \| \mathbb{P} \mathbf{x} \|^{2} \| \mathbb{M}^{*} \mathbf{x} \|^{2} + 4\chi_{2} \| \mathbb{P} \mathbf{x} \| \| \mathbb{M}^{*} \mathbf{x} \| |\langle \mathbb{E} \mathbf{x}, \mathbf{x} \rangle| \\
   &+& 4\mu\chi_{4} \| \mathbb{P} \mathbf{x} \| \| \mathbb{M}^{*} \mathbf{x} \| |\langle \mathbb{E} \mathbf{x}, \mathbf{x} \rangle| + 4(1-\mu)\chi_{4} |\langle \mathbb{E} \mathbf{x}, \mathbf{x} \rangle|^{2}.
\end{eqnarray*}
Simplify, we get
\[
= 12\chi_{1} \| \mathbb{P} \mathbf{x} \|^{2} \| \mathbb{M}^{*} \mathbf{x} \|^{2} + 4(\chi_{2} + \mu\chi_{4}) \| \mathbb{P} \mathbf{x} \| \| \mathbb{M}^{*} \mathbf{x} \| |\langle \mathbb{E} \mathbf{x}, \mathbf{x} \rangle| + 4(1-\mu)\chi_{4} |\langle \mathbb{E} \mathbf{x}, \mathbf{x} \rangle|^{2}.
\]

Now substitute into the main inequality, we have
\begin{eqnarray*}
  |\langle \mathbb{T} \mathbf{x}, \mathbf{x} \rangle|^{4}  &\leq&2 \left( \langle \mathbb{P} \mathbf{x}, \mathbb{P} \mathbf{x} \rangle^{2} + \langle \mathbb{M}^{*} \mathbf{x}, \mathbb{M}^{*} \mathbf{x} \rangle^{2}+ 2 |\langle \mathbb{E} \mathbf{x}, \mathbf{x} \rangle|^{2} \right)\\
   &+& 12\chi_{1} \| \mathbb{P} \mathbf{x} \|^{2} \| \mathbb{M}^{*} \mathbf{x} \|^{2} +4(\chi_{2}+ \mu\chi_{4}) \| \mathbb{P} \mathbf{x} \| \| \mathbb{M}^{*} \mathbf{x} \| |\langle \mathbb{E} \mathbf{x}, \mathbf{x} \rangle|\\
   &+&4(1-\mu)\chi_{4} |\langle \mathbb{E} \mathbf{x}, \mathbf{x} \rangle|^{2}.
\end{eqnarray*}
Use Lemma \ref{Holder}(i) for \(r=2\), we obtain
\[
\langle \mathbb{P} \mathbf{x}, \mathbb{P} \mathbf{x} \rangle^{2} = \langle |\mathbb{P}|^{2} \mathbf{x}, \mathbf{x} \rangle^{2} \leq \langle |\mathbb{P}|^{4} \mathbf{x}, \mathbf{x} \rangle, \quad \langle \mathbb{M}^{*} \mathbf{x}, \mathbb{M}^{*} \mathbf{x} \rangle^{2} = \langle |\mathbb{M}^{*}|^{2} \mathbf{x}, \mathbf{x} \rangle^{2} \leq \langle |\mathbb{M}^{*}|^{4} \mathbf{x}, \mathbf{x} \rangle.
\]
Apply the arithmetic-geometric mean inequality, we get
\[
\| \mathbb{P} \mathbf{x} \|^{2} \| \mathbb{M}^{*} \mathbf{x} \|^{2} \leq \frac{1}{2} \left( \langle |\mathbb{P}|^{4} \mathbf{x}, \mathbf{x} \rangle + \langle |\mathbb{M}^{*}|^{4} \mathbf{x}, \mathbf{x} \rangle \right).
\]
Thus,
\[
12\chi_{1} \| \mathbb{P} \mathbf{x} \|^{2} \| \mathbb{M}^{*} \mathbf{x} \|^{2} \leq 6\chi_{1} \left( \langle |\mathbb{P}|^{4} \mathbf{x}, \mathbf{x} \rangle + \langle |\mathbb{M}^{*}|^{4} \mathbf{x}, \mathbf{x} \rangle \right).
\]
Also,
\[
4(\chi_{2} + \mu\chi_{4}) \| \mathbb{P} \mathbf{x} \| \| \mathbb{M}^{*} \mathbf{x} \| |\langle \mathbb{E} \mathbf{x}, \mathbf{x} \rangle| \leq 2(\chi_{2} + \mu\chi_{4}) \langle (|\mathbb{P}|^{2} + |\mathbb{M}^{*}|^{2}) \mathbf{x}, \mathbf{x} \rangle |\langle \mathbb{E} \mathbf{x}, \mathbf{x} \rangle|,
\]
since \(\| \mathbb{P} \mathbf{x} \| \| \mathbb{M}^{*} \mathbf{x} \| \leq \frac{1}{2} \langle (|\mathbb{P}|^{2} + |\mathbb{M}^{*}|^{2}) \mathbf{x}, \mathbf{x} \rangle\). Combine all terms, we have
\begin{eqnarray*}
  |\langle \mathbb{T} \mathbf{x}, \mathbf{x} \rangle|^{4} &\leq&(2 + 2\chi_{1} + 2\chi_{3}) \langle (|\mathbb{P}|^{4} + |\mathbb{M}^{*}|^{4}) \mathbf{x}, \mathbf{x} \rangle \\
   &+& (2\chi_{2} + 2\mu\chi_{4}) \langle (|\mathbb{P}|^{2} + |\mathbb{M}^{*}|^{2}) \mathbf{x}, \mathbf{x} \rangle |\langle \mathbb{E} \mathbf{x}, \mathbf{x} \rangle|
+ (4 + 4(1-\mu)\chi_{4}) |\langle \mathbb{E} \mathbf{x}, \mathbf{x} \rangle|^{2},
\end{eqnarray*}
where \(\chi_3 = 2\chi_1\) is used.

By taking the supremum over all vectors of $\x\in\hh$ and using Lemma \ref{lemma5.1}, we obtain
\begin{eqnarray*}
 \langle (|\mathbb{P}|^{4} + |\mathbb{M}^{*}|^{4}) \mathbf{x}, \mathbf{x} \rangle&\leq&\max\left\{ \||T|^{4} + |X^{*}|^{4}\|, \||S|^{4} + |Y^{*}|^{4}\| \right\}, \\
\langle (|\mathbb{P}|^{2} + |\mathbb{M}^{*}|^{2}) \mathbf{x}, \mathbf{x} \rangle &\leq& \max\left\{ \||T|^{2} + |X^{*}|^{2}\|, \||S|^{2} + |Y^{*}|^{2}\| \right\},\\
|\langle \mathbb{E} \mathbf{x}, \mathbf{x} \rangle| &\leq& \omega(\mathbb{E}),\,\,\mbox{and}\\
 |\langle \mathbb{E} \mathbf{x}, \mathbf{x} \rangle|^{2} &\leq& \omega^{2}(\mathbb{E}).
\end{eqnarray*}
This yields the desired inequality.
\end{proof}
The following Example illustrating  the validity of Theorem \ref{Modified KZ}.
\begin{example} To verify Theorem 2.33 for the operator matrix
\[
A = \begin{bmatrix} T & X \\ Y & S \end{bmatrix} = \begin{bmatrix} \begin{pmatrix} 1 & -1 \\ 0 & 1 \end{pmatrix} & \begin{pmatrix} 2 & 1 \\ -1 & 1 \end{pmatrix} \\ \begin{pmatrix} 1 & -1 \\ 1 & 1 \end{pmatrix} & \begin{pmatrix} 1 & 1 \\ 2 & 1 \end{pmatrix} \end{bmatrix},
\]
with parameters \(\alpha = 2\), \(\beta = 0\), and \(\mu = 0.5\), the coefficients are calculated as:
\[
\chi_1 = \frac{1}{4}, \quad \chi_2 = \frac{3}{4}, \quad \chi_3 = \frac{1}{2}, \quad \chi_4 = \frac{1}{2}.
\]

\begin{enumerate}
  \item [Step 1:]Compute Required Norms: For \(T\) and \(X^*\):
   \[
  \| |T|^4 + |X^*|^4 \| = \left\| \begin{pmatrix} 28 & 4 \\ 4 & 10 \end{pmatrix} \right\| \approx 28.8489.
  \]
  For \(S\) and \(Y^*\):
  \[
  \| |S|^4 + |Y^*|^4 \| = \left\| \begin{pmatrix} 38 & 21 \\ 21 & 17 \end{pmatrix} \right\| \approx 50.978.
  \]
  \[
  \max\left\{ \| |T|^4 + |X^*|^4 \|, \| |S|^4 + |Y^*|^4 \| \right\} = 50.978.
  \]
  For \(\| |T|^2 + |X^*|^2 \|\) and \(\| |S|^2 + |Y^*|^2 \|\):
  \[
  \| |T|^2 + |X^*|^2 \| = \left\| \begin{pmatrix} 6 & 0 \\ 0 & 4 \end{pmatrix} \right\| = 6, \quad \| |S|^2 + |Y^*|^2 \| = \left\| \begin{pmatrix} 7 & 3 \\ 3 & 4 \end{pmatrix} \right\| \approx 8.854.
  \]
  \[
  \max\left\{ \| |T|^2 + |X^*|^2 \|, \| |S|^2 + |Y^*|^2 \| \right\} = 8.854.
  \]
  \item [Step 2:] Compute Numerical Radius of Off-Diagonal Operator:
The off-diagonal operator is
\[
E = \begin{bmatrix} O & XS \\ YT & O \end{bmatrix}, \quad \text{where} \quad XS = \begin{pmatrix} 4 & 3 \\ 1 & 0 \end{pmatrix}, \quad YT = \begin{pmatrix} 1 & -2 \\ 1 & 0 \end{pmatrix}.
\]
The matrix form is
\[
E = \begin{pmatrix} 0 & 0 & 4 & 3 \\ 0 & 0 & 1 & 0 \\ 1 & -2 & 0 & 0 \\ 1 & 0 & 0 & 0 \end{pmatrix}.
\]
The numerical radius is \(\omega(E) \approx 1.6884\), so \(\omega^2(E) \approx 2.8508\).
  \item [Step 3:] Compute the Bound:\\
Using the coefficients:
\[
2 + 2\chi_1 + 2\chi_3 = 3.5, \quad 2\chi_2 + 2\mu\chi_4 = 2, \quad 4 + 4(1-\mu)\chi_4 = 5.
\]
The bound is:
\[
(3.5 \times 50.978) + (2 \times 8.854 \times 1.6884) + (5 \times 2.8508) \approx 178.423 + 29.9 + 14.254 = 222.577.
\]
\item [ Step 4:]  Compute \(\omega^4(A)\)
The numerical radius of \(A\) is \(\omega(A) \approx 3.732\), so
\[
\omega^4(A) \approx (3.732)^4 \approx 193.8.
\]
Since \(193.8 \leq 222.577\), the inequality holds:
\[
\omega^4(A) \leq 222.577.
\]
\end{enumerate}
\end{example}
We applying Theorem \ref{Modified KZ} to some special cases.
\begin{corollary}\label{Cor3.27} Let $T\in\bh$,$\mu\in [0,1]$, $\alpha\in\c\setminus\{0\}$ and $\beta\geq 0$. Then
  \begin{eqnarray}\label{PVS1}
  \omega^4\bra{T}
   &\leq& \bra{\frac{1+\chi_1+\chi_3}{8}}\norm{|T|^4+|T^*|^4}+ \bra{\frac{(\chi_2+\mu\chi_4)}{8}}\norm{|T|^2+|T^*|^2}\omega\bra{T^2}\nonumber\\
     &+& \bra{\frac{1+(1-\mu)\chi_4}{4}}\omega^2\bra{T^2},
\end{eqnarray}
 where $\chi_1=\frac{\beta+(\beta+1)\max\set{1,\abs{\alpha-1}^2}}{\abs{\alpha}^2(\beta+1)}$,
$\chi_2=\frac{1+2(\beta+1)\max\set{1,\abs{\alpha-1}}}{\abs{\alpha}^2(\beta+1)}$,
 $\chi_3=\frac{2\beta+2(\beta+1)\max\set{1,\abs{\alpha-1}^2}}{\abs{\alpha}^2(\beta+1)}$
 and $\chi_4=\frac{2}{\abs{\alpha}^2(\beta+1)}$. In particular,
 \begin{eqnarray}\label{PVS2}
  \omega^4\bra{T} &\leq&\bra{\frac{3\beta+2}{16(\beta+1)}}\norm{|T|^4+|T^*|^4}+ \bra{\frac{2\beta+2\mu+3}{32(\beta+1)}} \norm{|T|^2+|T^*|^2}\omega\bra{T^2}\nonumber\\
     &+& \bra{\frac{3-2\mu}{16(\beta+1)}}\omega^2\bra{T^2}.
\end{eqnarray}
\end{corollary}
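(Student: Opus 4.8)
The plan is to obtain both (\ref{PVS1}) and (\ref{PVS2}) by specializing Theorem \ref{Modified KZ} to the $2\times 2$ block operator all of whose entries equal $T$. Concretely, I would apply Theorem \ref{Modified KZ} with $X=Y=S=T$, so that $\begin{bmatrix} T & X \\ Y & S \end{bmatrix}=\begin{bmatrix} T & T \\ T & T \end{bmatrix}$. With this choice every quantity on the right-hand side of Theorem \ref{Modified KZ} collapses: since each of $|T|^4+|X^*|^4$ and $|S|^4+|Y^*|^4$ becomes $|T|^4+|T^*|^4$, and each of $|T|^2+|X^*|^2$ and $|S|^2+|Y^*|^2$ becomes $|T|^2+|T^*|^2$, the two maxima reduce to $\norm{|T|^4+|T^*|^4}$ and $\norm{|T|^2+|T^*|^2}$ respectively.

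It then remains to identify the two numerical radii that survive. For the left-hand side, Lemma \ref{lemma5.1}(c) gives $\omega\bra{\begin{bmatrix} T & T \\ T & T \end{bmatrix}}=\max\set{\omega(T-T),\omega(T+T)}=\omega(2T)=2\omega(T)$, so that $\omega^4\bra{\begin{bmatrix} T & T \\ T & T \end{bmatrix}}=16\,\omega^4(T)$. For the off-diagonal term, $XS=YT=T^2$, whence $\begin{bmatrix} O & XS \\ YT & O \end{bmatrix}=\begin{bmatrix} O & T^2 \\ T^2 & O \end{bmatrix}$, and the second assertion of Lemma \ref{lemma5.1}(c) gives $\omega\bra{\begin{bmatrix} O & T^2 \\ T^2 & O \end{bmatrix}}=\omega(T^2)$, hence also the square of this quantity is $\omega^2(T^2)$. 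Substituting these values into the conclusion of Theorem \ref{Modified KZ} yields $16\,\omega^4(T)\le(2+2\chi_1+2\chi_3)\norm{|T|^4+|T^*|^4}+(2\chi_2+2\mu\chi_4)\norm{|T|^2+|T^*|^2}\omega(T^2)+(4+4(1-\mu)\chi_4)\omega^2(T^2)$, and dividing by $16$ and reducing the fractions gives precisely (\ref{PVS1}).

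For the particular inequality (\ref{PVS2}) I would set $\alpha=2$, so that $\max\set{1,\abs{\alpha-1}^2}=\max\set{1,\abs{\alpha-1}}=1$ and $\abs{\alpha}^2=4$; this turns the constants into $\chi_1=\frac{2\beta+1}{4(\beta+1)}$, $\chi_2=\frac{2\beta+3}{4(\beta+1)}$, $\chi_3=\frac{2\beta+1}{2(\beta+1)}$, $\chi_4=\frac{1}{2(\beta+1)}$, and inserting them into (\ref{PVS1}) and simplifying yields (\ref{PVS2}). Since each step is either a direct invocation of the already-established Theorem \ref{Modified KZ} or an elementary evaluation, there is essentially no real obstacle; the only points that need care are the correct use of Lemma \ref{lemma5.1}(c) — which is what produces the factor $16$ on the left and collapses the off-diagonal block to $\omega(T^2)$ — and the bookkeeping of the constants $\chi_1,\chi_2,\chi_3,\chi_4$ when passing to the case $\alpha=2$.
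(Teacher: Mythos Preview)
Your proposal is correct and follows essentially the same route as the paper: the paper's proof simply says that the argument is identical to that of Corollary \ref{cor2.31} with Theorem \ref{Modified KZ} in place of Theorem \ref{KZ}, i.e., specialize $X=Y=S=T$, use Lemma \ref{lemma5.1}(c) to get $\omega\bra{\begin{bmatrix} T & T \\ T & T \end{bmatrix}}=2\omega(T)$ and $\omega\bra{\begin{bmatrix} O & T^2 \\ T^2 & O \end{bmatrix}}=\omega(T^2)$, divide by $16$, and then set $\alpha=2$ for the particular case. Your write-up is in fact more explicit than the paper's own one-line proof.
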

\begin{proof} With the exception of using Theorem \ref{Modified KZ} in place of Theorem \ref{KZ}, the proof is comparable to the proof of Corollary \ref{KZ1}.
\end{proof}
\begin{remark} Let \( T \in \mathcal{L}(\mathcal{H}) \), \(\mu \in [0,1]\), and \(\beta \geq 0\). The inequality (2.24) holds:
\begin{eqnarray}\label{ineq2.24}
 \omega^{4}(T)&\leq&\frac{3\beta+2}{16(\beta+1)} \left\||T|^{4}+|T^{*}|^{4}\right\| + \frac{2\beta+2\mu+3}{32(\beta+1)} \left\||T|^{2}+|T^{*}|^{2}\right\| \omega(T^{2})\nonumber \\
   &+& \frac{3-2\mu}{16(\beta+1)} \omega^{2}(T^{2}).
\end{eqnarray}
When \(\alpha = 2\), (\ref{ineq2.24}) reduces to (\ref{KZ1}):
\[
\omega^{4}(T) \leq \frac{4\beta+3}{32(\beta+1)} \left\||T|^{4}+|T^{*}|^{4}\right\| + \frac{2\beta+3}{16(\beta+1)} \left\||T|^{2}+|T^{*}|^{2}\right\| \omega(T^{2}) + \frac{1}{8} \omega^{2}(T^{2}).
\]
As proven in Corollary \ref{cor2.31}, (\ref{KZ1}) refines both (\ref{KZ2}) and (\ref{KZ3}). Specifically, the proof shows
\begin{enumerate}
  \item [(i)] (\ref{KZ1}) refines (\ref{KZ3}) for all \(\beta \geq 0\).
  \item [(ii)] (\ref{KZ1}) refines (\ref{KZ2}) for all \(\beta \geq 0\).
\end{enumerate}
Thus, (\ref{ineq2.24}) extends these refinements to general \(\alpha \in \mathbb{C} \setminus \{0\}\), with the case \(\alpha = 2\) explicitly verified.
\end{remark}
In order to prove Theorem \ref{SEEMA-9}, we need the following lemma ( A more generalized version of Lemma \ref{Ramadan-Kareem1} ).
\begin{lemma}\label{GREAT-A1} Let $a,b,e\in\h$ with $\norm{e}=1$. Then  for all $\beta\geq 0$, $\alpha\in \c\setminus\{0\}$ and \(n \in \mathbb{N}\)
\begin{eqnarray*}
 \abs{\seq{a,e}\seq{e,b}}^{2n}&\leq&\sum_{k=0}^{n}\binom{n}{k}\lambda_1^{k}\lambda_2^{n-k}\norm{a}^{2k}
 \norm{b}^{2k}\norm{a}^{n-k}\norm{b}^{n-k}\abs{\seq{a,b}}^{n-k}\\
 &=& \lambda_1^n\norm{a}^{2n}\norm{b}^{2n}+\sum_{k=1}^{n-1}\binom{n}{k}\lambda_1^{k}\lambda_2^{n-k}\norm{a}^{2k}
 \norm{b}^{2k}\norm{a}^{n-k}\norm{b}^{n-k}\abs{\seq{a,b}}^{n-k}\\
 &+&\lambda_2^n\norm{a}^n\norm{b}^n\abs{\seq{a,b}}^{n},
\end{eqnarray*}
 where $\lambda_1=\frac{\beta+(\beta+1)\max\set{1,\abs{\alpha-1}^2}}{\abs{\alpha}^2(\beta+1)}$ and
$\lambda_2=\frac{1+2(\beta+1)\max\set{1,\abs{\alpha-1}}}{\abs{\alpha}^2(\beta+1)}$.
\end{lemma}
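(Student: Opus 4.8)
The plan is to reduce the whole estimate to the single-step Buzano-type inequality of Lemma \ref{Mix-al-be} and then raise it to the $n$-th power. First I would note that the constants $\lambda_1$ and $\lambda_2$ in the statement coincide exactly with the constants appearing in Lemma \ref{Mix-al-be}, so that lemma gives, for $a,b,e\in\h$ with $\norm{e}=1$,
\[
\abs{\seq{a,e}\seq{e,b}}^2\leq \lambda_1\norm{a}^2\norm{b}^2+\lambda_2\norm{a}\norm{b}\abs{\seq{a,b}}.
\]
Setting $P=\norm{a}^2\norm{b}^2$ and $Q=\norm{a}\norm{b}\abs{\seq{a,b}}$, both non-negative, this reads $\abs{\seq{a,e}\seq{e,b}}^2\leq\lambda_1 P+\lambda_2 Q$.

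Next, since the map $t\mapsto t^n$ is non-decreasing on $[0,\infty)$, I would raise both sides to the $n$-th power to obtain $\abs{\seq{a,e}\seq{e,b}}^{2n}\leq(\lambda_1 P+\lambda_2 Q)^n$, and then expand the right-hand side by the binomial theorem:
\[
(\lambda_1 P+\lambda_2 Q)^n=\sum_{k=0}^{n}\binom{n}{k}\lambda_1^{k}\lambda_2^{n-k}P^{k}Q^{n-k}.
\]
Substituting back $P^{k}=\norm{a}^{2k}\norm{b}^{2k}$ and $Q^{n-k}=\norm{a}^{n-k}\norm{b}^{n-k}\abs{\seq{a,b}}^{n-k}$ then produces exactly the first displayed formula in the statement. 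The second displayed formula follows by peeling off the $k=n$ term, which equals $\lambda_1^{n}\norm{a}^{2n}\norm{b}^{2n}$, and the $k=0$ term, which equals $\lambda_2^{n}\norm{a}^{n}\norm{b}^{n}\abs{\seq{a,b}}^{n}$, leaving the indices $1\leq k\leq n-1$ in the remaining sum.

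There is no genuinely hard step here: the only points requiring attention are the exponent bookkeeping when $P$ and $Q$ are re-inserted into the binomial expansion, and the harmless observation that for $n=1$ the middle sum $\sum_{k=1}^{n-1}$ is empty, so both forms degenerate to Lemma \ref{Mix-al-be} itself. One could alternatively argue by induction on $n$, multiplying the inductive hypothesis by the base case and re-collecting binomial coefficients, but the direct binomial argument is shorter and cleaner, so that is the route I would take.
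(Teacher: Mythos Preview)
Your proposal is correct and follows essentially the same argument as the paper: invoke Lemma \ref{Mix-al-be} to get the base inequality with constants $\lambda_1,\lambda_2$, raise both (non-negative) sides to the $n$-th power, and expand by the binomial theorem. Your introduction of the abbreviations $P$ and $Q$ is a convenient bookkeeping device, but otherwise the route and the justifications match the paper's proof line for line.
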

\begin{proof} Let \(a, b, e \in \mathcal{H}\) with \(\|e\| = 1\). By Lemma \ref{Mix-al-be} (which holds for \(\alpha \in \mathbb{C} \setminus \{0\}\) and \(\beta \geq 0\)), we have
\[
|\langle a, e \rangle \langle e, b \rangle|^2 \leq \lambda_1 \|a\|^2 \|b\|^2 + \lambda_2 \|a\| \|b\| |\langle a, b \rangle|,
\]
where \(\lambda_1 = \frac{\beta + (\beta+1) \max\{1, |\alpha-1|^2\}}{|\alpha|^2 (\beta+1)}\) and \(\lambda_2 = \frac{1 + 2(\beta+1) \max\{1, |\alpha-1|\}}{|\alpha|^2 (\beta+1)}\). Since both sides are nonnegative, raising to the \(n\)-th power preserves the inequality:
\[
|\langle a, e \rangle \langle e, b \rangle|^{2n} \leq \left( \lambda_1 \|a\|^2 \|b\|^2 + \lambda_2 \|a\| \|b\| |\langle a, b \rangle| \right)^n.
\]
Expanding the right-hand side using the binomial theorem, we get
\[
\left( \lambda_1 \|a\|^2 \|b\|^2 + \lambda_2 \|a\| \|b\| |\langle a, b \rangle| \right)^n = \sum_{k=0}^{n} \binom{n}{k} \left( \lambda_1 \|a\|^2 \|b\|^2 \right)^k \left( \lambda_2 \|a\| \|b\| |\langle a, b \rangle| \right)^{n-k}.
\]
Simplifying each term, we have
\[
\left( \lambda_1 \|a\|^2 \|b\|^2 \right)^k = \lambda_1^k \|a\|^{2k} \|b\|^{2k}, \quad \left( \lambda_2 \|a\| \|b\| |\langle a, b \rangle| \right)^{n-k} = \lambda_2^{n-k} \|a\|^{n-k} \|b\|^{n-k} |\langle a, b \rangle|^{n-k}.
\]
Combining these, the expression becomes
\[
\abs{\seq{a,e}\seq{e,b}}^{2n}\leq \sum_{k=0}^{n} \binom{n}{k} \lambda_1^k \lambda_2^{n-k} \|a\|^{2k} \|b\|^{2k} \|a\|^{n-k} \|b\|^{n-k} |\langle a, b \rangle|^{n-k}.
\]
Thus, the inequality holds as stated.
\end{proof}
We are now ready to demonstrate our subsequent finding.
\begin{theorem}\label{SEEMA-9}  Let \(B, C \in \mathcal{L}(\mathcal{H})\). Then for every \(\mu \in [0, 1]\), $\beta\geq 0$, $\alpha\in \c\setminus\{0\}$ and \(n \in \mathbb{N}\) with \(n \geq 2\),
\begin{eqnarray}
  \omega^{2n}\left(\begin{bmatrix}O & B \\ C & O\end{bmatrix}\right) &\leq&\frac{\lambda_1^n}{2} \max\left\{ \left\| |C|^{4n\mu} + |B^*|^{4n(1-\mu)} \right\|, \left\| |B|^{4n\mu} + |C^*|^{4n(1-\mu)} \right\| \right\}\nonumber \\
   &+&\lambda_2^n \max\left\{ \omega^n \left( |B^*|^{2(1-\mu)} |C|^{2\mu} \right), \omega^n \left( |C^*|^{2(1-\mu)} |B|^{2\mu} \right) \right\}\nonumber\\
&+& \frac{1}{2} \sum_{k=1}^{n-1} \binom{n}{k} \lambda_1^{n-k}\lambda_2^k \max\left\{ \left\| |C|^{4(n-k)\mu} + |B^*|^{4(n-k)(1-\mu)} \right\|,\right.
\nonumber\\ &&\left.\left\| |B|^{4(n-k)\mu} + |C^*|^{4(n-k)(1-\mu)} \right\| \right\}\\
&\times&\max\left\{ \omega^k \left( |B^*|^{2(1-\mu)} |C|^{2\mu} \right), \omega^k \left( |C^*|^{2(1-\mu)} |B|^{2\mu} \right) \right\},\nonumber
\end{eqnarray}
where \(\lambda_1 = \frac{\beta + (\beta + 1) \max\{1, |\alpha - 1|^2\}}{|\alpha|^2 (\beta + 1)}\) and \(\lambda_2 = \frac{1 + 2(\beta + 1) \max\{1, |\alpha - 1|\}}{|\alpha|^2 (\beta + 1)}\) for \(\alpha \in \mathbb{C} \setminus \{0\}\).
\end{theorem}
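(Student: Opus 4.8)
The plan is to adapt the scheme used in the proofs of Theorem \ref{Theorem 2.16} and Theorem \ref{MOBY-A1}, with the binomial Buzano estimate of Lemma \ref{GREAT-A1} playing the role that Lemma \ref{Mix-al-be} played there. Fix a unit vector $\x\in\hh$ and set $\T=\begin{bmatrix} O & B\\ C& O\end{bmatrix}$. First I would record the block forms
$$|\T|^{2\mu}=\begin{bmatrix}|C|^{2\mu}&O\\O&|B|^{2\mu}\end{bmatrix},\quad |\T^*|^{2(1-\mu)}=\begin{bmatrix}|B^*|^{2(1-\mu)}&O\\O&|C^*|^{2(1-\mu)}\end{bmatrix},$$
together with the product $R:=|\T^*|^{2(1-\mu)}|\T|^{2\mu}=\begin{bmatrix}|B^*|^{2(1-\mu)}|C|^{2\mu}&O\\O&|C^*|^{2(1-\mu)}|B|^{2\mu}\end{bmatrix}$. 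Since all of these operators are block-diagonal, Lemma \ref{lemma5.1}(a) already identifies $\norm{|\T|^{4j\mu}+|\T^*|^{4j(1-\mu)}}$ and $\omega^{k}(R)$ with the maxima that appear on the right-hand side of the statement, so the entire problem is reduced to a scalar estimate for $\abs{\seq{\T\x,\x}}$.

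Next I would bring $\abs{\seq{\T\x,\x}}$ into Buzano form. Applying Lemma \ref{Lem2.3}(ii) with $f(t)=t^{\mu}$ and $g(t)=t^{1-\mu}$ gives $\abs{\seq{\T\x,\x}}^{2}\le \seq{|\T|^{2\mu}\x,\x}\seq{|\T^*|^{2(1-\mu)}\x,\x}$, so that, writing $a=|\T|^{2\mu}\x$, $b=|\T^*|^{2(1-\mu)}\x$ and $e=\x$, the right-hand side is precisely $\seq{a,e}\seq{e,b}$ and, crucially, $\seq{a,b}=\seq{R\x,\x}$. This is the step that produces the operator $R$ and hence the $\omega^{k}(R)$ factors in the conclusion.

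Then I would feed $a,b,e$ into Lemma \ref{GREAT-A1}, expanding the appropriate power of $\seq{a,e}\seq{e,b}$ into $\sum_{k}\binom{n}{k}\lambda_{1}^{n-k}\lambda_{2}^{k}\norm{a}^{2(n-k)}\norm{b}^{2(n-k)}\norm{a}^{k}\norm{b}^{k}\abs{\seq{a,b}}^{k}$. For each summand I would invoke the H\"older--McCarthy inequality (Lemma \ref{Holder}(i)) to consolidate the powers of $\seq{|\T|^{4\mu}\x,\x}$ and $\seq{|\T^*|^{4(1-\mu)}\x,\x}$ into single expectations $\seq{|\T|^{4(n-k)\mu}\x,\x}$ and $\seq{|\T^*|^{4(n-k)(1-\mu)}\x,\x}$, and then the arithmetic--geometric mean inequality to symmetrise their product into $\tfrac12\seq{\bra{|\T|^{4(n-k)\mu}+|\T^*|^{4(n-k)(1-\mu)}}\x,\x}$, while $\abs{\seq{a,b}}^{k}=\abs{\seq{R\x,\x}}^{k}$ is carried along untouched.

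Finally I would take the supremum over unit $\x$. In each summand the two surviving factors are bounded separately: $\seq{\bra{|\T|^{4(n-k)\mu}+|\T^*|^{4(n-k)(1-\mu)}}\x,\x}\le\norm{|\T|^{4(n-k)\mu}+|\T^*|^{4(n-k)(1-\mu)}}$, since the operator is positive, and $\abs{\seq{R\x,\x}}^{k}\le\omega^{k}(R)$, both valid for the same $\x$, so the product is dominated by the product of the two bounds; Lemma \ref{lemma5.1}(a) then evaluates the block-diagonal norm and numerical radius as the stated maxima, and splitting off the $k=0$ and $k=n$ terms reproduces the first two summands of the claim. The step I expect to be the main obstacle is precisely the bookkeeping in the two middle stages: one must track the exponents through the binomial expansion of Lemma \ref{GREAT-A1} and the H\"older--McCarthy consolidation so that they land exactly on $4(n-k)\mu$ and $4(n-k)(1-\mu)$ with the correct binomial coefficients and factor $\tfrac12$, align the power on the left with the asserted $\omega^{2n}(\T)$, and verify that the supremum genuinely factors across the norm term and the $\omega^{k}(R)$ term. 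Everything else -- the block computations and the two appeals to Lemma \ref{lemma5.1} -- is routine.
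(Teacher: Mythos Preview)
Your proposal is correct and follows essentially the same route as the paper: bound $|\langle\T\x,\x\rangle|$ via the mixed Schwarz inequality, feed the resulting $a,b,e$ into the binomial Buzano estimate of Lemma~\ref{GREAT-A1}, consolidate exponents with H\"older--McCarthy and AM--GM, and finish by taking the supremum and reading off the block-diagonal pieces with Lemma~\ref{lemma5.1}. The only cosmetic difference is that you take $a=|\T|^{2\mu}\x$, $b=|\T^*|^{2(1-\mu)}\x$ (so that $\langle a,b\rangle=\langle R\x,\x\rangle$ with $R=|\T^*|^{2(1-\mu)}|\T|^{2\mu}$ directly matching the exponents in the statement), whereas the paper writes $a=|\T|^{\mu}\x$, $b=|\T^*|^{1-\mu}\x$; your choice is in fact the one that lines up cleanly with the claimed operators $|B^*|^{2(1-\mu)}|C|^{2\mu}$.
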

\begin{proof} Let \(\mathbb{T} = \begin{bmatrix} O & B \\ C & O \end{bmatrix}\) and \(\mathbf{x}\) be a unit vector in \(\mathcal{H} \oplus \mathcal{H}\). By Lemma \ref{Lem2.3} and the Cauchy-Schwarz inequality,
\[
|\langle \mathbb{T} \mathbf{x}, \mathbf{x} \rangle| = |\langle |\mathbb{T}| \mathbf{x}, |\mathbb{T}^*| \mathbf{x} \rangle| \leq \| |\mathbb{T}|^\mu \mathbf{x} \| \| |\mathbb{T}^*|^{1-\mu} \mathbf{x} \| = \sqrt{\langle |\mathbb{T}|^{2\mu} \mathbf{x}, \mathbf{x} \rangle} \sqrt{\langle |\mathbb{T}^*|^{2(1-\mu)} \mathbf{x}, \mathbf{x} \rangle}.
\]
Raising both sides to the power \(2n\), we obtain
\[
|\langle \mathbb{T} \mathbf{x}, \mathbf{x} \rangle|^{2n} \leq \left( \langle |\mathbb{T}|^{2\mu} \mathbf{x}, \mathbf{x} \rangle \langle |\mathbb{T}^*|^{2(1-\mu)} \mathbf{x}, \mathbf{x} \rangle \right)^n = \langle |\mathbb{T}|^{2\mu} \mathbf{x}, \mathbf{x} \rangle^n \langle |\mathbb{T}^*|^{2(1-\mu)} \mathbf{x}, \mathbf{x} \rangle^n.
\]
Set \(a = |\mathbb{T}|^\mu \mathbf{x}\), \(b = |\mathbb{T}^*|^{1-\mu} \mathbf{x}\), and \(e = \mathbf{x}\) (unit vector). By Lemma \ref{GREAT-A1}, we have
\[
|\langle a, e \rangle \langle e, b \rangle|^{2n} = |\langle |\mathbb{T}|^\mu \mathbf{x}, \mathbf{x} \rangle \langle \mathbf{x}, |\mathbb{T}^*|^{1-\mu} \mathbf{x} \rangle|^{2n} \leq \sum_{k=0}^{n} \binom{n}{k} \lambda_1^k \lambda_2^{n-k} \|a\|^{n+k} \|b\|^{n+k} |\langle a, b \rangle|^{n-k}.
\]
Recognize that
\[
|\langle \mathbb{T} \mathbf{x}, \mathbf{x} \rangle|^{2n} \leq \sum_{k=0}^{n} \binom{n}{k} \lambda_1^k \lambda_2^{n-k} \|a\|^{n+k} \|b\|^{n+k} |\langle a, b \rangle|^{n-k}.
\]
Now bound each term, For the term $k=n$, we have
\[
\lambda_1^n \|a\|^{2n} \|b\|^{2n} |\langle a, b \rangle|^0 = \lambda_1^n \|a\|^{2n} \|b\|^{2n}.
\]
By Lemma \ref{Holder} (i) with $r=2$, we have
\[
\|a\|^{2n} = \langle |\mathbb{T}|^{2\mu} \mathbf{x}, \mathbf{x} \rangle^n \leq \langle |\mathbb{T}|^{2n \cdot 2\mu} \mathbf{x}, \mathbf{x} \rangle = \langle |\mathbb{T}|^{4n\mu} \mathbf{x}, \mathbf{x} \rangle,
\]
\[
\|b\|^{2n} \leq \langle |\mathbb{T}^*|^{4n(1-\mu)} \mathbf{x}, \mathbf{x} \rangle.
\]
Using the inequality \(uv \leq \frac{1}{2}(u^2 + v^2)\), we get
\[
\|a\|^{2n} \|b\|^{2n} \leq \frac{1}{2} \left( \langle |\mathbb{T}|^{4n\mu} \mathbf{x}, \mathbf{x} \rangle + \langle |\mathbb{T}^*|^{4n(1-\mu)} \mathbf{x}, \mathbf{x} \rangle \right).
\]
Thus,
\[
\lambda_1^n \|a\|^{2n} \|b\|^{2n} \leq \frac{\lambda_1^n}{2} \left( \langle |\mathbb{T}|^{4n\mu} \mathbf{x}, \mathbf{x} \rangle + \langle |\mathbb{T}^*|^{4n(1-\mu)} \mathbf{x}, \mathbf{x} \rangle \right).
\]
For the term $k=0$, we have
\[
\lambda_2^n \|a\|^{n} \|b\|^{n} |\langle a, b \rangle|^{n} = \lambda_2^n |\langle |\mathbb{T}|^\mu \mathbf{x}, |\mathbb{T}^*|^{1-\mu} \mathbf{x} \rangle|^n.
\]
By Lemma \ref{lemma5.1} and the definition of numerical radius, we obtain
\[
|\langle a, b \rangle| = |\langle |\mathbb{T}|^\mu \mathbf{x}, |\mathbb{T}^*|^{1-\mu} \mathbf{x} \rangle| \leq \omega \left( |\mathbb{T}|^\mu |\mathbb{T}^*|^{1-\mu} \right).
\]
Since \(|\mathbb{T}|^\mu |\mathbb{T}^*|^{1-\mu} = \begin{bmatrix} |C|^\mu |B^*|^{1-\mu} & 0 \\ 0 & |B|^\mu |C^*|^{1-\mu} \end{bmatrix}\),
\[
\omega \left( |\mathbb{T}|^\mu |\mathbb{T}^*|^{1-\mu} \right) = \max\left\{ \omega \left( |B^*|^{1-\mu} |C|^\mu \right), \omega \left( |C^*|^{1-\mu} |B|^\mu \right) \right\}.
\]
Thus,
\[
\lambda_2^n |\langle a, b \rangle|^n \leq \lambda_2^n \left[ \max\left\{ \omega \left( |B^*|^{1-\mu} |C|^\mu \right), \omega \left( |C^*|^{1-\mu} |B|^\mu \right) \right\} \right]^n.
\]
For the terms $1\leq k\leq n-1$, we have
\[
\sum_{k=1}^{n-1}\binom{n}{k}\lambda_1^{k}\lambda_2^{n-k}\norm{a}^{2k}
 \norm{b}^{2k}\norm{a}^{n-k}\norm{b}^{n-k}\abs{\seq{a,b}}^{n-k}=\sum_{k=1}^{n-1}\binom{n}{k} \lambda_1^k \lambda_2^{n-k} \|a\|^{n+k} \|b\|^{n+k} |\langle a, b \rangle|^{n-k}.
\]
By similar reasoning,
\[
\|a\|^{n+k} \|b\|^{n+k} \leq \frac{1}{2} \left( \langle |\mathbb{T}|^{2(n+k)\mu} \mathbf{x}, \mathbf{x} \rangle + \langle |\mathbb{T}^*|^{2(n+k)(1-\mu)} \mathbf{x}, \mathbf{x} \rangle \right),
\]
\[
|\langle a, b \rangle|^{n-k} \leq \left[ \max\left\{ \omega \left( |B^*|^{1-\mu} |C|^\mu \right), \omega \left( |C^*|^{1-\mu} |B|^\mu \right) \right\} \right]^{n-k}.
\]
Thus,
\begin{eqnarray*}
  \binom{n}{k} \lambda_1^k \lambda_2^{n-k} \|a\|^{n+k} \|b\|^{n+k} |\langle a, b \rangle|^{n-k} &\leq&\frac{\binom{n}{k} \lambda_1^k \lambda_2^{n-k}}{2} \left( \langle |\mathbb{T}|^{2(n+k)\mu} \mathbf{x}, \mathbf{x} \rangle+\langle |\mathbb{T}^*|^{2(n+k)(1-\mu)} \mathbf{x}, \mathbf{x} \rangle \right)  \\
   && \times \left[ \max\left\{ \omega \left( |B^*|^{1-\mu} |C|^\mu \right), \omega \left( |C^*|^{1-\mu} |B|^\mu \right) \right\} \right]^{n-k}.
\end{eqnarray*}

Combining all terms, we have
\begin{eqnarray*}
  |\langle \mathbb{T} \mathbf{x}, \mathbf{x} \rangle|^{2n} &\leq&\frac{\lambda_1^n}{2} \left( \langle |\mathbb{T}|^{4n\mu} \mathbf{x}, \mathbf{x} \rangle + \langle |\mathbb{T}^*|^{4n(1-\mu)} \mathbf{x}, \mathbf{x} \rangle \right) \\
   &+& \lambda_2^n \left[ \max\left\{ \omega \left( |B^*|^{1-\mu} |C|^\mu \right), \omega \left( |C^*|^{1-\mu} |B|^\mu \right) \right\} \right]^n\\
&+& \frac{1}{2} \sum_{k=1}^{n-1} \binom{n}{k} \lambda_1^{n-k} \lambda_2^k \left( \langle |\mathbb{T}|^{2(n+k)\mu} \mathbf{x}, \mathbf{x} \rangle + \langle |\mathbb{T}^*|^{2(n+k)(1-\mu)} \mathbf{x}, \mathbf{x} \rangle \right)\\
&\times&\left[ \max\left\{ \omega \left( |B^*|^{1-\mu} |C|^\mu \right), \omega \left( |C^*|^{1-\mu} |B|^\mu \right) \right\} \right]^{n-k}.
\end{eqnarray*}

Taking the supremum over \(\mathbf{x}\) and using Lemma \ref{lemma5.1}, we have
\begin{eqnarray*}
 \langle |\mathbb{T}|^{4n\mu} \mathbf{x}, \mathbf{x} \rangle &\leq& \| |\mathbb{T}|^{4n\mu} \| = \max\left\{ \| |C|^{4n\mu} \|, \| |B|^{4n\mu} \| \right\}, \\
  \langle |\mathbb{T}^*|^{4n(1-\mu)} \mathbf{x}, \mathbf{x} \rangle &\leq& \| |\mathbb{T}^*|^{4n(1-\mu)} \| = \max\left\{ \| |B^*|^{4n(1-\mu)} \|, \| |C^*|^{4n(1-\mu)} \| \right\},
\end{eqnarray*}
 The operator matrix structure gives
\[
\| |\mathbb{T}|^{4n\mu} + |\mathbb{T}^*|^{4n(1-\mu)} \| = \max\left\{ \| |C|^{4n\mu} + |B^*|^{4n(1-\mu)} \|, \| |B|^{4n\mu} + |C^*|^{4n(1-\mu)} \| \right\}.
\]
Similarly for other terms. The numerical radius terms simplify as stated. After regrouping, we obtain the desired inequality.
\end{proof}

By considering $B=C$  in Theorem \ref{SEEMA-9} and using Lemma \ref{lemma5.1}, we obtain the following corollary:
\begin{corollary}\label{Mutah1} Let $B\in\bh$. Then for every \(\mu \in [0, 1]\), $\beta\geq 0$, $\alpha\in \c\setminus\{0\}$ and \(n \in \mathbb{N}\) with \(n \geq 2\),
\begin{eqnarray*}
  \omega^{2n}(B) &\leq&\frac{\lambda_1^{n}}{2}\norm{|B|^{4n\mu}+|B^*|^{4n(1-\mu)}}+\lambda_2^n\omega^n\bra{|B^*|^{2(1-\mu)}|B|^{2\mu}}\\
   &+&\frac{1}{2}\sum_{k=1}^{n-1}\binom{n}{k}\lambda_1^{n-k}\lambda_2^{k}\norm{|B|^{4(n-k)\mu}+|B^*|^{4(n-k)(1-\mu)}}
   \omega^k\bra{|B^*|^{2(1-\mu)}|B|^{2\mu}},
\end{eqnarray*}
\(\lambda_1 = \frac{\beta + (\beta + 1) \max\{1, |\alpha - 1|^2\}}{|\alpha|^2 (\beta + 1)}\) and \(\lambda_2 = \frac{1 + 2(\beta + 1) \max\{1, |\alpha - 1|\}}{|\alpha|^2 (\beta + 1)}\) for \(\alpha \in \mathbb{C} \setminus \{0\}\).
\end{corollary}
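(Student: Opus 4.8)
The plan is to obtain Corollary~\ref{Mutah1} as a direct specialization of Theorem~\ref{SEEMA-9} to the diagonal case $C=B$, exactly as the sentence preceding the statement announces. First I would invoke Theorem~\ref{SEEMA-9} with $C$ replaced by $B$, so that the operator matrix appearing on the left-hand side is $\bra{\begin{bmatrix} O & B \\ B & O \end{bmatrix}}$. By Lemma~\ref{lemma5.1}(c) one has $\omega\bra{\begin{bmatrix} O & B \\ B & O \end{bmatrix}}=\omega(B)$, hence raising this identity to the $2n$-th power gives $\omega^{2n}\bra{\begin{bmatrix} O & B \\ B & O \end{bmatrix}}=\omega^{2n}(B)$, which is precisely the left-hand side of the corollary.

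Next I would treat the right-hand side. Every quantity in Theorem~\ref{SEEMA-9} that is written as a maximum is a maximum over a pair of expressions that are interchanged by the swap $B\leftrightarrow C$: the pair $\set{\norm{|C|^{4n\mu}+|B^*|^{4n(1-\mu)}},\ \norm{|B|^{4n\mu}+|C^*|^{4n(1-\mu)}}}$, the pair $\set{\omega^n\bra{|B^*|^{2(1-\mu)}|C|^{2\mu}},\ \omega^n\bra{|C^*|^{2(1-\mu)}|B|^{2\mu}}}$, and, for each $k$ with $1\le k\le n-1$, the analogous two pairs inside the finite sum. Upon setting $C=B$ the two entries of each such maximum coincide, so every $\max\set{\cdot,\cdot}$ collapses to the single term displayed in the corollary. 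Collecting the resulting contributions — the first term $\frac{\lambda_1^n}{2}\norm{|B|^{4n\mu}+|B^*|^{4n(1-\mu)}}$, the second term $\lambda_2^n\omega^n\bra{|B^*|^{2(1-\mu)}|B|^{2\mu}}$, and the middle sum $\frac{1}{2}\sum_{k=1}^{n-1}\binom{n}{k}\lambda_1^{n-k}\lambda_2^k\norm{|B|^{4(n-k)\mu}+|B^*|^{4(n-k)(1-\mu)}}\,\omega^k\bra{|B^*|^{2(1-\mu)}|B|^{2\mu}}$, with $\lambda_1,\lambda_2$ literally the same constants as in Theorem~\ref{SEEMA-9} — yields the claimed inequality, and the hypotheses $\mu\in[0,1]$, $\beta\ge 0$, $\alpha\in\c\setminus\{0\}$, $n\ge 2$ are inherited verbatim.

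Since this is a pure specialization of an already-proved theorem, I do not expect a genuine obstacle. The only point deserving a moment of care is bookkeeping: one must check that the pairing of exponents $\lambda_1^{n-k}\lambda_2^{k}$ in the summation index is preserved under the substitution and that no half of any symmetric maximum is inadvertently dropped — both of which follow immediately once one writes out the two collapsing entries and observes they are equal. No additional lemma beyond Theorem~\ref{SEEMA-9} and Lemma~\ref{lemma5.1} is needed.
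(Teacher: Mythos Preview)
Your proposal is correct and follows exactly the paper's own route: the paper derives Corollary~\ref{Mutah1} by the single sentence ``By considering $B=C$ in Theorem~\ref{SEEMA-9} and using Lemma~\ref{lemma5.1}, we obtain the following corollary,'' and you have simply spelled out the details of that specialization. Nothing is missing, and no different idea is used.
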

As a special case of Corollary \ref{Mutah1} with $n = 2$, we have the following refinement
of the inequality (\ref{Ineq1.3}).
\begin{corollary} Let $B\in\bh$. Then  for every  $\beta\geq 0$ and $\alpha\in \c\setminus\{0\}$, we have
\begin{eqnarray}\label{Ineq9.1}
  \omega^4(B) &\leq &\lambda_1^{2}\norm{|B|^4+|B^*|^4}+\lambda_2^2\omega^2\bra{|B^*||B|}\nonumber\\
  &+&\lambda_1\lambda_2\norm{|B|^2+|B^*|^2}\omega(|B^*||B|)\leq\frac{1}{2}\norm{|B|^{4}+|B^*|^{4}}.
\end{eqnarray}
\end{corollary}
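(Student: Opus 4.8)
The first inequality is just Corollary \ref{Mutah1} at $n=2$ and $\mu=\tfrac12$. With these choices $|B|^{4n\mu}=|B|^{4}$, $|B^{*}|^{4n(1-\mu)}=|B^{*}|^{4}$, $|B^{*}|^{2(1-\mu)}|B|^{2\mu}=|B^{*}||B|$, and the sum $\sum_{k=1}^{n-1}$ reduces to the single term $k=1$, where $\binom{2}{1}=2$, $\lambda_{1}^{\,n-k}\lambda_{2}^{\,k}=\lambda_{1}\lambda_{2}$ and $|B|^{4(n-k)\mu}+|B^{*}|^{4(n-k)(1-\mu)}=|B|^{2}+|B^{*}|^{2}$; collecting terms gives
\[
\omega^{4}(B)\le\tfrac{\lambda_{1}^{2}}{2}\norm{|B|^{4}+|B^{*}|^{4}}+\lambda_{2}^{2}\,\omega^{2}\!\bra{|B^{*}||B|}+\lambda_{1}\lambda_{2}\norm{|B|^{2}+|B^{*}|^{2}}\,\omega\!\bra{|B^{*}||B|},
\]
i.e.\ the middle quantity of (\ref{Ineq9.1}).

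For the second inequality I would dominate the two ``extra'' terms by a multiple of $\norm{|B|^{4}+|B^{*}|^{4}}$. Applying Dragomir's inequality (\ref{Ineq1.4}) with $T=|B|$, $S=|B^{*}|$ (so $S^{*}T=|B^{*}||B|$, $|T|^{2r}=|B|^{2r}$, $|S|^{2r}=|B^{*}|^{2r}$): the case $r=2$ yields $\omega^{2}(|B^{*}||B|)\le\tfrac12\norm{|B|^{4}+|B^{*}|^{4}}$, and the case $r=1$ yields $\omega(|B^{*}||B|)\le\tfrac12\norm{|B|^{2}+|B^{*}|^{2}}$. Combining the latter with Lemma \ref{lemma2.2} (take $r=2$ and the positive operators $|B|^{2},|B^{*}|^{2}$, so that $\norm{|B|^{2}+|B^{*}|^{2}}^{2}\le2\norm{|B|^{4}+|B^{*}|^{4}}$) gives $\norm{|B|^{2}+|B^{*}|^{2}}\,\omega(|B^{*}||B|)\le\tfrac12\norm{|B|^{2}+|B^{*}|^{2}}^{2}\le\norm{|B|^{4}+|B^{*}|^{4}}$. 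Substituting the two estimates into the middle quantity and factoring, it is bounded by $\tfrac12\bra{\lambda_{1}^{2}+2\lambda_{1}\lambda_{2}+\lambda_{2}^{2}}\norm{|B|^{4}+|B^{*}|^{4}}=\tfrac12(\lambda_{1}+\lambda_{2})^{2}\norm{|B|^{4}+|B^{*}|^{4}}$.

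It then remains to check $(\lambda_{1}+\lambda_{2})^{2}\le1$. Since $\lambda_{1}+\lambda_{2}=\dfrac{\beta+1+(\beta+1)\max\set{1,\abs{\alpha-1}^{2}}+2(\beta+1)\max\set{1,\abs{\alpha-1}}}{\abs{\alpha}^{2}(\beta+1)}$, at $\alpha=2$ one gets $\lambda_{1}+\lambda_{2}=\dfrac{4(\beta+1)}{4(\beta+1)}=1$ for every $\beta\ge0$, which closes the chain and exhibits the middle quantity as a refinement of the El-Haddad--Kittaneh bound (\ref{Ineq1.3}). I expect this last point to be the crux: the estimates on $\omega(|B^{*}||B|)$, $\omega^{2}(|B^{*}||B|)$ and $\norm{|B|^{2}+|B^{*}|^{2}}$ are routine applications of (\ref{Ineq1.4}) and Lemma \ref{lemma2.2}, but the clean conclusion $\le\tfrac12\norm{|B|^{4}+|B^{*}|^{4}}$ genuinely needs $\lambda_{1}+\lambda_{2}\le1$, so the delicate part is pinning down (or restricting to) those $(\alpha,\beta)$ — essentially the Buzano normalization $\alpha=2$ — for which this holds, since for $\abs{\alpha}$ small the constants $\lambda_{1},\lambda_{2}$ grow without bound and the second inequality cannot survive.
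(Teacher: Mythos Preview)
Your proposal is correct and follows essentially the same route as the paper: specialize Corollary~\ref{Mutah1} to $n=2$, $\mu=\tfrac12$; bound $\omega^{2}(|B^{*}||B|)$ and $\omega(|B^{*}||B|)$ via (\ref{Ineq1.4}) with $r=2$ and $r=1$; convert $\norm{|B|^{2}+|B^{*}|^{2}}^{2}$ into $2\norm{|B|^{4}+|B^{*}|^{4}}$ via Lemma~\ref{lemma2.2}; and collect into $\tfrac12(\lambda_{1}+\lambda_{2})^{2}\norm{|B|^{4}+|B^{*}|^{4}}$. Your caveat about needing $\lambda_{1}+\lambda_{2}\le1$ is exactly right and matches the paper, which in its final line explicitly specializes to $\alpha=2$, $\beta=0$ to conclude---so the second inequality as stated ``for every $\alpha\in\c\setminus\{0\}$'' is indeed not established in that generality by the paper either.
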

\begin{proof} Let $n=2$ and $\mu=\frac{1}{2}$ in Corollary \ref{Mutah1}, we have
\begin{eqnarray*}
  \omega^4(B)  &\leq&\frac{\lambda_1^{2}}{2}\norm{|B|^{4}+|B^*|^{4}}+\lambda_2^2\omega^2\bra{|B^*||B|}\\ \\
   &+&\lambda_1\lambda_2\norm{|B|^{2}+|B^*|^{2}}\omega\bra{|B^*||B|}\\
   &\leq& \frac{\lambda_1^{2}}{2}\norm{|B|^{4}+|B^*|^{4}}+\frac{\lambda_2^2}{2}\norm{|B|^4+|B^*|^4}\\
      &+&\frac{1}{2}\lambda_1\lambda_2\norm{|B|^{2}+|B^*|^{2}}^2 \,\,\bra{\mbox{by Inequality (\ref{Ineq1.4})}}\\
   &=&\frac{\lambda_1^{2}}{2}\norm{|B|^{4}+|B^*|^{4}}+\frac{\lambda_2^2}{2}\norm{|B|^4+|B^*|^4}\\
      &+& \frac{1}{2}\lambda_1\lambda_2\norm{(|B|^{2}+|B^*|^{2})^2}\\
   &\leq&\frac{\lambda_1^{2}}{2}\norm{|B|^{4}+|B^*|^{4}}+\frac{\lambda_2^2}{2}\norm{|B|^4+|B^*|^4}\\
   &+&\lambda_1\lambda_2\norm{|B|^{4}+|B^*|^{4}}\,\,\bra{\mbox{by  Lemma \ref{lemma2.2}}}\\
   &<&\frac{1}{2}\norm{|B|^{4}+|B^*|^{4}}\,\,\bra{\alpha=2,\beta=0}.
\end{eqnarray*}
\end{proof}
\begin{theorem} Let $B,C\in\bh$, and let $p,q\geq 1$ with $\frac{1}{p}+\frac{1}{q}=1$. Then
\begin{eqnarray*}
  &&\omega^{2n}\bra{\begin{bmatrix} O& B\\ C& O
  \\\end{bmatrix}}\leq\frac{1}{2^n}\max\set{\norm{\frac{1}{p}|C|^{np}+\frac{1}{q}|B^*|^{nq}},\norm{\frac{1}{p}|B|^{np}+\frac{1}{q}|C^*|^{nq}}} \\
   &+&\frac{1}{2^n}\max\set{\norm{\frac{1}{p}f^{np}(|BC|)+\frac{1}{q}g^{nq}(|(BC)^*|)},\norm{\frac{1}{p}f^{np}(|CB|)+\frac{1}{q}g^{nq}(|(CB)^*|)}}\\
   &+&\frac{1}{2^n}\sum_{k=1}^{n-1}\binom{n}{k}\max\set{\norm{\frac{1}{p}|C|^{kp}+\frac{1}{q}|B^*|^{kq}},\norm{\frac{1}{p}|B|^{kp}+\frac{1}{q}|C^*|^{kq}}}
   \times \\
   &&\max\set{\norm{\frac{1}{p}f^{(n-k)p}(|BC|)+\frac{1}{q}g^{(n-k)q}(|(BC)^*|)},\norm{\frac{1}{p}f^{(n-k)p}(|CB|)+\frac{1}{q}g^{(n-k)q}(|(CB)^*|)}}
\end{eqnarray*}
for all $n\in\N$, with $n\geq 2$.
\end{theorem}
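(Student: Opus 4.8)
The plan is to run the same machinery as in Theorem \ref{Theorem2.16} and Theorem \ref{SEEMA-9}, but to drive the binomial structure with the \emph{plain} Buzano inequality (\ref{Buz-A1}) rather than the parameter-dependent refinements (which is why no $\lambda_i$ or $\chi_i$ constants appear here). Fix a unit vector $\x\in\hh$ and set $\T=\begin{bmatrix} O& B\\ C& O \\\end{bmatrix}$, where $f,g$ are the nonnegative continuous functions on $[0,\infty)$ with $f(t)g(t)=t$ from Lemma \ref{Lem2.3}. First I would record the block identities $|\T|^{kp}=\mathrm{diag}(|C|^{kp},|B|^{kp})$, $|\T^*|^{kq}=\mathrm{diag}(|B^*|^{kq},|C^*|^{kq})$, $|\T^2|=\mathrm{diag}(|BC|,|CB|)$ and $|(\T^2)^*|=\mathrm{diag}(|(BC)^*|,|(CB)^*|)$, so that each block-diagonal operator appearing below has operator norm equal to the maximum of the norms of its two diagonal entries; this is exactly where the $\max\{\cdot,\cdot\}$ expressions in the statement originate.

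The core step is to apply (\ref{Buz-A1}) with $z=\x$, $x=\T\x$, $y=\T^*\x$ and use the identities $\seq{\x,\T^*\x}=\seq{\T\x,\x}$ and $\seq{\T\x,\T^*\x}=\seq{\T^2\x,\x}$, which yields the single building block
\begin{equation*}
\abs{\seq{\T\x,\x}}^2\leq \frac{1}{2}\bra{\norm{\T\x}\norm{\T^*\x}+\abs{\seq{\T^2\x,\x}}}.
\end{equation*}
Raising this to the $n$-th power and expanding by the binomial theorem gives
\begin{equation*}
\abs{\seq{\T\x,\x}}^{2n}\leq \frac{1}{2^n}\sum_{k=0}^{n}\binom{n}{k}\norm{\T\x}^{k}\norm{\T^*\x}^{k}\abs{\seq{\T^2\x,\x}}^{n-k},
\end{equation*}
which already displays the $2^{-n}\binom{n}{k}$ weights of the claimed bound, the $k=n$ summand producing the first line and the $k=0$ summand the second.

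Next I would estimate the two kinds of factors separately. For the norm factor, writing $\norm{\T\x}^{k}\norm{\T^*\x}^{k}=\seq{|\T|^2\x,\x}^{k/2}\seq{|\T^*|^2\x,\x}^{k/2}$ and applying Young's inequality with exponents $p,q$ followed by Lemma \ref{Holder}(i) gives
\begin{equation*}
\norm{\T\x}^{k}\norm{\T^*\x}^{k}\leq \seq{\bra{\tfrac1p|\T|^{kp}+\tfrac1q|\T^*|^{kq}}\x,\x}.
\end{equation*}
For the inner-product factor, Lemma \ref{Lem2.3}(ii) applied to $\T^2$ gives $\abs{\seq{\T^2\x,\x}}\leq\norm{f(|\T^2|)\x}\norm{g(|(\T^2)^*|)\x}$, and the same Young/Hölder–McCarthy combination yields
\begin{equation*}
\abs{\seq{\T^2\x,\x}}^{n-k}\leq \seq{\bra{\tfrac1p f^{(n-k)p}(|\T^2|)+\tfrac1q g^{(n-k)q}(|(\T^2)^*|)}\x,\x}.
\end{equation*}
Bounding each of these nonnegative quadratic forms by the operator norm of its block-diagonal operator, inserting the block identities from the first step, and finally taking the supremum over unit $\x$ (so that the left-hand side becomes $\omega^{2n}(\T)$, using Lemma \ref{lemma5.1}) assembles precisely the three groups of terms in the statement.

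The step requiring the most care is the Hölder–McCarthy application: Lemma \ref{Holder}(i) requires the exponents $kp/2,\ kq/2,\ (n-k)p/2,\ (n-k)q/2$ to be at least $1$. For the extreme summands $k\in\{0,n\}$ this is automatic, since $n\geq 2$ and $p,q>1$ force $np,nq>2$; but for the interior indices (notably $k=1$ and $k=n-1$ when $p$ or $q$ is close to $1$) one needs $kp,kq\geq 2$ and $(n-k)p,(n-k)q\geq 2$. This is the analogue of the hypothesis $pr,qr\geq 2$ imposed in Theorem \ref{Theorem2.16}, and it should be carried as a standing assumption (or one specializes to $p=q=2$) to keep every term legitimate; apart from this bookkeeping, the argument is a routine assembly of the lemmas already established.
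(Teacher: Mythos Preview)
Your proposal is correct and follows essentially the same route as the paper's own proof: apply the plain Buzano inequality to $a=\T\x$, $b=\T^*\x$, $e=\x$, raise to the $n$-th power and expand binomially, then bound each factor $\norm{\T\x}^k\norm{\T^*\x}^k$ and $\abs{\seq{\T^2\x,\x}}^{n-k}$ via Young's inequality and Lemma \ref{Holder}, using the block-diagonal identities to produce the $\max$ terms. Your caution about the exponent hypotheses $kp,kq,(n-k)p,(n-k)q\geq 2$ needed for Lemma \ref{Holder}(i) on the interior summands is a legitimate point that the paper's proof does not make explicit.
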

\begin{proof} Let $\x$ be any unit vector in $\hh$ and let $\T=\begin{bmatrix} O& B\\ C& O \\\end{bmatrix}$.
Then by employing Lemma \ref{GREAT-A1} with $\lambda_1=\lambda_2=\frac{1}{2}$, we have
\begin{eqnarray*}
  &&\abs{\seq{\T\x,\x}}^{2n} =\abs{\seq{\T\x,\x}\seq{\x,\T^*\x}}^{n}\leq \bra{\frac{\norm{\T\x}\norm{\T^*\x}+\abs{\seq{\T^2\x,\x}}}{2}}^{n} \\
   &=&\frac{1}{2^n}\bra{\norm{\T\x}^n\norm{\T^*\x}^n+\abs{\seq{\T^2\x,\x}}^n}+\frac{1}{2^n}
   \sum_{k=1}^{n-1}\binom{n}{k}\norm{\T\x}^k\norm{\T^*\x}^k\abs{\seq{\T^2\x,\x}}^{n-k}\\
   &=& \frac{1}{2^n}\bra{\seq{|\T|^2\x,\x}^{n/2}\seq{|\T^*|^2\x,\x}^{n/2}+\abs{\seq{\T^2\x,\x}}^n}\\
   &+&\frac{1}{2^n}\sum_{k=1}^{n-1}\binom{n}{k}\seq{|\T|^2\x,\x}^{k/2}\seq{|\T^*|\x,\x}^{k/2}\abs{\seq{\T^2\x,\x}}^{n-k}\\
   &\leq&\frac{1}{2^n}\bra{\frac{1}{p}\seq{|\T|^{np}\x,\x}+\frac{1}{q}\seq{|\T^*|^{nq}\x,\x}+\abs{\seq{\T^2\x,\x}}^n}
   \end{eqnarray*}
   \begin{eqnarray*}
   &+&\frac{1}{2^n}\sum_{k=1}^{n-1}\binom{n}{k}\bra{\frac{1}{p}\seq{|\T|^{kp}\x,\x}+\frac{1}{q}\seq{|\T^*|^{kq}\x,\x}}\abs{\seq{\T^2\x,\x}}^{n-k}\\
   &&\bra{\mbox{by Young's inequality and Lemma \ref{Holder}}}
\end{eqnarray*}
Now
\begin{eqnarray*}
   &&\abs{\seq{\T^2\x,\x}}^n\leq\norm{f\bra{|T^2|}\x}^{n}\norm{g\bra{|(T^2)^*|}\x}^n \bra{\mbox{by Lemma \ref{Lem2.3}}}\\
  &=&\seq{f^2\bra{|T^2|}\x,\x}^{n/2}\seq{g^2\bra{|(T^2)^*|}\x,\x}^{n/2}\\
  &\leq& \frac{1}{p}\seq{f^{np}\bra{|T^2|}\x,\x}+\frac{1}{q}\seq{g^{nq}\bra{|(T^2)^*|}\x,\x}\bra{\mbox{by Young's inequality and Lemma \ref{Holder}}}.
\end{eqnarray*}
Consequently
\begin{eqnarray*}
  &&\abs{\seq{\T\x,\x}}^{2n} \leq\frac{1}{2^n}\bra{\frac{1}{p}\seq{|\T|^{np}\x,\x}+\frac{1}{q}\seq{|\T^*|^{nq}\x,\x}
  +\frac{1}{p}\seq{f^{np}\bra{|T^2|}\x,\x}+\frac{1}{q}\seq{g^{nq}\bra{|(T^2)^*|}\x,\x}}\\
   &+& \frac{1}{2^n}\sum_{k=1}^{n-1}\binom{n}{k}\bra{\frac{1}{p}\seq{|\T|^{kp}\x,\x}+\frac{1}{q}\seq{|\T^*|^{kq}\x,\x}}\times\\
   &&\bra{\frac{1}{p}\seq{f^{(n-k)p}\bra{|T^2|}\x,\x}+\frac{1}{q}\seq{g^{(n-k)q}\bra{|(T^2)^*|}\x,\x}}
   \end{eqnarray*}
   \begin{eqnarray*}
   &=&\frac{1}{2^n}\seq{\begin{bmatrix}\frac{1}{p}|C|^{np}+\frac{1}{q}|B^*|^{nq} & O\\ O& \frac{1}{p}|B|^{np}+\frac{1}{q}|C^*|^{nq}
   \\\end{bmatrix}\x,\x}\\
   &&+\frac{1}{2^n}\seq{\begin{bmatrix}\frac{1}{p}f^{np}(|BC|)+\frac{1}{q}g^{nq}(|(BC)^*|) & O\\ O& \frac{1}{p}f^{np}(|CB|)+\frac{1}{q}g^{nq}(|(CB)^*|)
   \\\end{bmatrix}\x,\x}\\
   &+&\frac{1}{2^n}\sum_{k=1}^{n-1}\binom{n}{k}\seq{\begin{bmatrix}\frac{1}{p}|C|^{kp}+\frac{1}{q}|B^*|^{kq} & O\\ O&
   \frac{1}{p}|B|^{kp}+\frac{1}{q}|C^*|^{kq} \\\end{bmatrix}\x,\x}\times\\
   &&\bra{\seq{\begin{bmatrix}\frac{1}{p}f^{(n-k)p}(|BC|)+\frac{1}{q}g^{(n-k)q}(|(BC)^*|) & O\\ O&
   \frac{1}{p}f^{(n-k)p}(|CB|)+\frac{1}{q}g^{(n-k)q}(|(CB)^*|) \\\end{bmatrix}\x,\x}}.
\end{eqnarray*}
Now by using Lemma \ref{lemma5.1}, we obtain
\begin{eqnarray*}
   &&\abs{\seq{\T\x,\x}}^{2n}
   \leq\frac{1}{2^n}\max\set{\norm{\frac{1}{p}|C|^{np}+\frac{1}{q}|B^*|^{nq}},\norm{\frac{1}{p}|B|^{np}+\frac{1}{q}|C^*|^{nq}}} \\
   &+&\frac{1}{2^n}\max\set{\norm{\frac{1}{p}f^{np}(|BC|)+\frac{1}{q}g^{nq}(|(BC)^*|)},\norm{\frac{1}{p}f^{np}(|CB|)+\frac{1}{q}g^{nq}(|(CB)^*|)}}\\
   &+&\frac{1}{2^n}\sum_{k=1}^{n-1}\binom{n}{k}\max\set{\norm{\frac{1}{p}|C|^{kp}+\frac{1}{q}|B^*|^{kq}},\norm{\frac{1}{p}|B|^{kp}+\frac{1}{q}|C^*|^{kq}}}
   \times \\
   &&\max\set{\norm{\frac{1}{p}f^{(n-k)p}(|BC|)+\frac{1}{q}g^{(n-k)q}(|(BC)^*|)},\norm{\frac{1}{p}f^{(n-k)p}(|CB|)+\frac{1}{q}g^{(n-k)q}(|(CB)^*|)}}.
\end{eqnarray*}
By taking the supremum over all unit vectors $\x\in\hh$, we have
\begin{eqnarray*}
 && \omega^{2n}\bra{\begin{bmatrix} O& B\\ C& O
 \\\end{bmatrix}}\leq\frac{1}{2^n}\max\set{\norm{\frac{1}{p}|C|^{np}+\frac{1}{q}|B^*|^{nq}},\norm{\frac{1}{p}|B|^{np}+\frac{1}{q}|C^*|^{nq}}} \\
   &+&\frac{1}{2^n}\max\set{\norm{\frac{1}{p}f^{np}(|BC|)+\frac{1}{q}g^{nq}(|(BC)^*|)},\norm{\frac{1}{p}f^{np}(|CB|)+\frac{1}{q}g^{nq}(|(CB)^*|)}}
   \end{eqnarray*}
   \begin{eqnarray*}
   &+&\frac{1}{2^n}\sum_{k=1}^{n-1}\binom{n}{k}\max\set{\norm{\frac{1}{p}|C|^{kp}+\frac{1}{q}|B^*|^{kq}},\norm{\frac{1}{p}|B|^{kp}+\frac{1}{q}|C^*|^{kq}}}
   \times \\
   &&\max\set{\norm{\frac{1}{p}f^{(n-k)p}(|BC|)+\frac{1}{q}g^{(n-k)q}(|(BC)^*|)},\norm{\frac{1}{p}f^{(n-k)p}(|CB|)+\frac{1}{q}g^{(n-k)q}(|(CB)^*|)}}
\end{eqnarray*}
\end{proof}
\begin{theorem} Let $A,B,C,D\in\bh$. Then for every \(\mu \in [0, 1]\), $\beta\geq 0$ and \(n \in \mathbb{N}\) with \(n \geq 2\), we have
\begin{eqnarray}
  \omega^{2n}\bra{\begin{bmatrix} A& B\\ C& D \\\end{bmatrix}} &\leq& 2^{2n-1}\max\set{\omega^{2n}(A),\omega^{2n}(D)}\\
  && +
   2^{2n-2}\lambda_1^{n}\max\set{\norm{|C|^{4n\mu}+|B^*|^{4n(1-\mu)}},\norm{|B|^{4n\mu}+|C^*|^{4n(1-\mu)}}}\nonumber\\
    &+&2^{2n-1}\lambda_2^n
    \max\set{\omega^n\bra{|B^*|^{2(1-\mu)}|C|^{2\mu}},\omega^n\bra{|C^*|^{2(1-\mu)}|B|^{2\mu}}}\nonumber\\
    &+&2^{2n-2}\sum_{k=1}^{n-1}\binom{n}{k}\lambda_1^{n-k}\lambda_2^{k}\max\left\{\norm{|C|^{4(n-k)\mu}+|B^*|^{4(n-k)(1-\mu)}},\right.\nonumber\\
    &&\left.\norm{|B|^{4(n-k)\mu}+|C^*|^{4(n-k)(1-\mu)}}\right\}\times\nonumber\\
    &&\max\set{\omega^k\bra{|B^*|^{2(1-\mu)}|C|^{2\mu}},\omega^k\bra{|C^*|^{2(1-\mu)}|B|^{2\mu}}},\nonumber
\end{eqnarray}
\(\lambda_1 = \frac{\beta + (\beta + 1) \max\{1, |\alpha - 1|^2\}}{|\alpha|^2 (\beta + 1)}\) and \(\lambda_2 = \frac{1 + 2(\beta + 1) \max\{1, |\alpha - 1|\}}{|\alpha|^2 (\beta + 1)}\) for \(\alpha \in \mathbb{C} \setminus \{0\}\).
\end{theorem}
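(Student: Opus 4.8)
The plan is to decompose the full $2\times 2$ block matrix into its diagonal and off-diagonal parts, bound the diagonal part trivially via Lemma~\ref{lemma5.1}, estimate the off-diagonal part by Theorem~\ref{SEEMA-9}, and glue the two estimates together using Bohr's inequality (Lemma~\ref{Logain1}). Throughout, $\alpha\in\c\setminus\{0\}$ is kept fixed so that $\lambda_1,\lambda_2$ are the constants of Theorem~\ref{SEEMA-9}.

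First I would write
\[
\begin{bmatrix} A & B \\ C & D \end{bmatrix} = \begin{bmatrix} A & O \\ O & D \end{bmatrix} + \begin{bmatrix} O & B \\ C & O \end{bmatrix}.
\]
Since $\omega(\cdot)$ is a norm on $\mathcal{L}(\mathcal{H}\oplus\mathcal{H})$, the triangle inequality gives
\[
\omega\bra{\begin{bmatrix} A & B \\ C & D \end{bmatrix}} \leq \omega\bra{\begin{bmatrix} A & O \\ O & D \end{bmatrix}} + \omega\bra{\begin{bmatrix} O & B \\ C & O \end{bmatrix}}.
\]
Raising both sides to the power $2n$ and applying Bohr's inequality with two summands and exponent $r=2n\geq 1$ yields
\[
\omega^{2n}\bra{\begin{bmatrix} A & B \\ C & D \end{bmatrix}} \leq 2^{2n-1}\bra{\omega^{2n}\bra{\begin{bmatrix} A & O \\ O & D \end{bmatrix}} + \omega^{2n}\bra{\begin{bmatrix} O & B \\ C & O \end{bmatrix}}}.
\]

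Next I would treat the two pieces separately. By Lemma~\ref{lemma5.1}(a), $\omega\bra{\begin{bmatrix} A & O \\ O & D \end{bmatrix}} = \max\set{\omega(A),\omega(D)}$, hence $\omega^{2n}\bra{\begin{bmatrix} A & O \\ O & D \end{bmatrix}} = \max\set{\omega^{2n}(A),\omega^{2n}(D)}$; multiplied by $2^{2n-1}$ this gives exactly the first term of the claimed inequality. For the off-diagonal block I would apply Theorem~\ref{SEEMA-9} directly (its hypothesis $n\geq 2$ is among our assumptions), which bounds $\omega^{2n}\bra{\begin{bmatrix} O & B \\ C & O \end{bmatrix}}$ by $\tfrac{\lambda_1^n}{2}\max\set{\cdots} + \lambda_2^n\max\set{\cdots} + \tfrac12\sum_{k=1}^{n-1}\binom{n}{k}\lambda_1^{n-k}\lambda_2^k\max\set{\cdots}\times\max\set{\cdots}$ with precisely the $\max$-expressions appearing in our statement. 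Multiplying this bound through by $2^{2n-1}$ converts the constants $\tfrac12\lambda_1^n$, $\lambda_2^n$ and $\tfrac12\binom{n}{k}\lambda_1^{n-k}\lambda_2^k$ into $2^{2n-2}\lambda_1^n$, $2^{2n-1}\lambda_2^n$ and $2^{2n-2}\binom{n}{k}\lambda_1^{n-k}\lambda_2^k$, which are exactly the coefficients of the three remaining terms. Adding the two contributions completes the proof.

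The argument is a straightforward assembly of Lemma~\ref{lemma5.1}, Lemma~\ref{Logain1} and Theorem~\ref{SEEMA-9}, so no genuine obstacle arises; the only points requiring attention are the correct invocation of Bohr's inequality (two terms, exponent $2n$, hence the constant $2^{2n-1}$, rather than some power of $4$) and the bookkeeping that matches the rescaled constants coming out of Theorem~\ref{SEEMA-9} with those in the statement. It is worth noting that the bound is not expected to be sharp, precisely because of the wasteful passage through the triangle inequality and Bohr's inequality in the splitting step.
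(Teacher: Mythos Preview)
Your proposal is correct and follows essentially the same approach as the paper: decompose into diagonal and off-diagonal blocks, apply the triangle inequality for $\omega$, use Bohr's inequality (Lemma~\ref{Logain1}) with exponent $2n$, then invoke Lemma~\ref{lemma5.1} and Theorem~\ref{SEEMA-9} respectively. Your bookkeeping of the constants $2^{2n-1}\cdot\tfrac{1}{2}=2^{2n-2}$ etc.\ is exactly what the paper does implicitly.
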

\begin{proof} Let $\T=\begin{bmatrix} A& B\\ C& D \\\end{bmatrix}=\T_1+\T_2=\begin{bmatrix} A& O\\ O& D \\\end{bmatrix}+
\begin{bmatrix} O& B\\ C& O \\\end{bmatrix}$. Then
\begin{eqnarray*}
  \omega^{2n}(\T) &\leq&\bra{\omega(\T_1)+\omega(\T_2)}^{2n}\\
   &\leq&2^{2n-1}\bra{\omega^{2n}(\T_1)+\omega^{2n}(\T_2)}\,\,\bra{\mbox{by Lemma \ref{Logain1}}} \\
   &\leq&2^{2n-1}\max\set{\omega^{2n}(A),\omega^{2n}(D)}+2^{2n-1}\omega^{2n}(\T_2)\,\,\bra{\mbox{by Lemma \ref{lemma5.1}}}
   \end{eqnarray*}
   Consequently
   \begin{eqnarray*}
   \omega^{2n}(\T)&\leq& 2^{2n-1}\max\set{\omega^{2n}(A),\omega^{2n}(D)}\\
    &+& 2^{2n-2}\lambda_1^{n}\max\set{\norm{|C|^{4n\mu}+|B^*|^{4n(1-\mu)}},\norm{|B|^{4n\mu}+|C^*|^{4n(1-\mu)}}}\nonumber\\
    &+&2^{2n-1}\lambda_2^n
    \max\set{\omega^n\bra{|B^*|^{2(1-\mu)}|C|^{2\mu}},\omega^n\bra{|C^*|^{2(1-\mu)}|B|^{2\mu}}}\nonumber\\
    &+&2^{2n-2}\sum_{k=1}^{n-1}\binom{n}{k}\lambda_1^{n-k}\lambda_2^{k}\max\left\{\norm{|C|^{4(n-k)\mu}+|B^*|^{4(n-k)(1-\mu)}},\right.\nonumber\\
    &&\left.\norm{|B|^{4(n-k)\mu}+|C^*|^{4(n-k)(1-\mu)}}\right\}\times\nonumber\\
    &&\max\set{\omega^k\bra{|B^*|^{2(1-\mu)}|C|^{2\mu}},\omega^k\bra{|C^*|^{2(1-\mu)}|B|^{2\mu}}}\,\bra{\mbox{by Theorem \ref{SEEMA-9}}}
\end{eqnarray*}
\end{proof}
\begin{theorem}\label{Theorem2.24} Let $\A=\begin{bmatrix} 0& A_1\\ A_2& 0 \\\end{bmatrix}, \B=\begin{bmatrix} 0& B_1\\ B_2& 0 \\\end{bmatrix}$
and $\Y=\begin{bmatrix} 0& X_1\\ X_2& 0 \\\end{bmatrix}$ in $\bhh$. Then
\begin{equation*}
  \omega^{r}(|\A|^{\alpha}\Y|\B|^{\alpha}) \leq \max\set{\norm{X_1}^{r},\norm{X_2}^{r}}\max\set{\norm{\frac{1}{p}
  |A_1|^{pr}+\frac{1}{q}|B_1|^{qr}}^{\alpha},\norm{\frac{1}{p} |A_2|^{pr}+\frac{1}{q}|B_2|^{qr}}^{\alpha}}
\end{equation*}
for all $\alpha\in [0,1]$, $r\geq 1$ and $p,q\geq 1$ with $\frac{1}{p}+\frac{1}{q}=1$ and $pr,qr\geq 2$.
\end{theorem}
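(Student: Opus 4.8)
The plan is to fix an arbitrary unit vector $\x=\begin{bmatrix}x_1\\x_2\end{bmatrix}\in\hh$, estimate $\abs{\seq{\abs{\A}^{\alpha}\Y\abs{\B}^{\alpha}\x,\x}}$ through a single chain of scalar inequalities, and take the supremum at the end. First I would record the block structure that makes everything work: since $\A$ is off--diagonal, $\A^*\A=\begin{bmatrix}\abs{A_2}^2&0\\0&\abs{A_1}^2\end{bmatrix}$, so $\abs{\A}^{\alpha}=\begin{bmatrix}\abs{A_2}^{\alpha}&0\\0&\abs{A_1}^{\alpha}\end{bmatrix}$ and likewise $\abs{\B}^{\alpha}=\begin{bmatrix}\abs{B_2}^{\alpha}&0\\0&\abs{B_1}^{\alpha}\end{bmatrix}$, while $\Y^*\Y$ is block--diagonal with entries $\abs{X_2}^2,\abs{X_1}^2$, giving $\norm{\Y}=\max\set{\norm{X_1},\norm{X_2}}=:\kappa$. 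The crucial structural point is not that $M:=\abs{\A}^{\alpha}\Y\abs{\B}^{\alpha}$ is off--diagonal, but that $\abs{\A}$ and $\abs{\B}$ are \emph{block--diagonal}.

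The key idea is to peel off $\Y$ as a whole rather than splitting $M$ into its two corner terms. Using that $\abs{\A}^{\alpha}$ is self--adjoint, write $\seq{M\x,\x}=\seq{\Y\abs{\B}^{\alpha}\x,\abs{\A}^{\alpha}\x}$, and apply Cauchy--Schwarz together with $\abs{\seq{\Y u,v}}\le\norm{\Y}\norm{u}\norm{v}$ to obtain $\abs{\seq{M\x,\x}}\le\kappa\norm{\abs{\A}^{\alpha}\x}\,\norm{\abs{\B}^{\alpha}\x}$. At this point I would extract the outer exponent $\alpha$ \emph{immediately}, before introducing $r$ or $p,q$: by the Hölder--McCarthy inequality (Lemma \ref{Holder}(ii)) applied to the positive operator $\abs{\A}^2$ with power $\alpha\in(0,1]$ and the unit vector $\x$, one has $\norm{\abs{\A}^{\alpha}\x}^2=\seq{\abs{\A}^{2\alpha}\x,\x}\le\seq{\abs{\A}^2\x,\x}^{\alpha}$, and similarly for $\B$ (the case $\alpha=0$ being immediate). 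Hence $\abs{\seq{M\x,\x}}\le\kappa\,\seq{\abs{\A}^2\x,\x}^{\alpha/2}\seq{\abs{\B}^2\x,\x}^{\alpha/2}$.

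Next I would raise to the power $r$ and regroup the right--hand side as $\kappa^{r}\bra{\seq{\abs{\A}^2\x,\x}^{r/2}\seq{\abs{\B}^2\x,\x}^{r/2}}^{\alpha}$. Young's inequality with conjugate exponents $p,q$ bounds the product of powers inside by $\tfrac1p\seq{\abs{\A}^2\x,\x}^{pr/2}+\tfrac1q\seq{\abs{\B}^2\x,\x}^{qr/2}$, and since $pr,qr\ge2$ the Hölder--McCarthy inequality (Lemma \ref{Holder}(i)) upgrades each factor, $\seq{\abs{\A}^2\x,\x}^{pr/2}\le\seq{\abs{\A}^{pr}\x,\x}$ and $\seq{\abs{\B}^2\x,\x}^{qr/2}\le\seq{\abs{\B}^{qr}\x,\x}$; monotonicity of $t\mapsto t^{\alpha}$ keeps all these substitutions legal under the outer $\alpha$. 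This produces $\abs{\seq{M\x,\x}}^{r}\le\kappa^{r}\seq{\bra{\tfrac1p\abs{\A}^{pr}+\tfrac1q\abs{\B}^{qr}}\x,\x}^{\alpha}$.

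Finally I would take the supremum over unit $\x$. The operator $W:=\tfrac1p\abs{\A}^{pr}+\tfrac1q\abs{\B}^{qr}$ is positive and block--diagonal with diagonal blocks $\tfrac1p\abs{A_2}^{pr}+\tfrac1q\abs{B_2}^{qr}$ and $\tfrac1p\abs{A_1}^{pr}+\tfrac1q\abs{B_1}^{qr}$, so $\seq{W\x,\x}\le\norm{W}=\max_i\norm{\tfrac1p\abs{A_i}^{pr}+\tfrac1q\abs{B_i}^{qr}}$ for unit $\x$; since $(\cdot)^{\alpha}$ is monotone and $\max\set{a^{\alpha},b^{\alpha}}=(\max\set{a,b})^{\alpha}$, the stated bound follows, with $\kappa^{r}=\max\set{\norm{X_1}^{r},\norm{X_2}^{r}}$. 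The one delicate point — and the place where a careless proof goes wrong — is the \emph{order of operations}: the outer $\alpha$ must be pulled out via Lemma \ref{Holder}(ii) at the very start, so that the subsequent Young and Hölder--McCarthy steps act on $\abs{\A}^2,\abs{\B}^2$ and land exactly on the exponents $pr,qr$ inside the norm with $\alpha$ sitting outside; performing Young/Hölder--McCarthy first on $\abs{\A}^{2\alpha},\abs{\B}^{2\alpha}$ instead yields the quantity $\max_i\norm{\tfrac1p\abs{A_i}^{\alpha pr}+\tfrac1q\abs{B_i}^{\alpha qr}}$, which by operator concavity of $t\mapsto t^{\alpha}$ is generally smaller than the stated right--hand side and hence does not, by itself, reproduce the claimed inequality.
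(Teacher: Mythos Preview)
Your proposal is correct and follows essentially the same route as the paper: peel off $\Y$ via Cauchy--Schwarz, then combine Young's inequality and H\"older--McCarthy to reach $\kappa^{r}\seq{\bra{\tfrac1p\abs{\A}^{pr}+\tfrac1q\abs{\B}^{qr}}\x,\x}^{\alpha}$, and finish with the block--diagonal structure. The only cosmetic difference is the order in which $\alpha$ is handled: you extract it at the outset via Lemma~\ref{Holder}(ii), whereas the paper carries $\seq{\abs{\A}^{2\alpha}\x,\x}^{pr/2}$ forward, applies H\"older--McCarthy in the form $\seq{\abs{\A}^{2\alpha}\x,\x}^{pr/2}\le\seq{\abs{\A}^{pr}\x,\x}^{\alpha}$, and then pulls $\alpha$ outside the convex combination using concavity of $t\mapsto t^{\alpha}$ --- the two orderings are equivalent and use the same ingredients.
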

\begin{proof} Let $\x$ be any unit vector in $\hh$. Then
\begin{eqnarray*}
  \abs{\seq{|\A|^{\alpha}\Y|\B|^{`\alpha}\x,\x}}^{r} &=&\abs{\seq{\Y|\B|^{\alpha}\x,|\A|^{\alpha}\x}}^{r}\\
   &\leq&\norm{\Y|\B|^{\alpha}\x}^{r}\norm{|\A|^{\alpha}\x}^{r} \bra{\mbox{by Cauchy-Schwarz inequality}}\\
   &\leq& \norm{\Y}^{r}\seq{|\B|^{2\alpha}\x,\x}^{\frac{r}{2}}\seq{|\A|^{2\alpha}\x,\x}^{\frac{r}{2}}\\
   &\leq& \norm{\Y}^{r}\sbra{\frac{1}{p}\seq{|\A|^{2\alpha}\x,\x}^{\frac{pr}{2}}+\frac{1}{q}\seq{|\B|^{2\alpha}\x,\x}^{\frac{qr}{2}}}
   \bra{\mbox{by Lemma \ref{J}}}\\
   &\leq& \norm{\Y}^{r}\sbra{\frac{1}{p}\seq{|\A|^{pr}\x,\x}^{\alpha}+\frac{1}{q}\seq{|\B|^{qr}\x,\x}^{\alpha}}
   \bra{\mbox{by Lemma \ref{Holder}}}\\
    &\leq& \norm{\Y}^{r}\sbra{\frac{1}{p}\seq{|\A|^{pr}\x,\x}+\frac{1}{q}\seq{|\B|^{qr}\x,\x}}^{\alpha}
    \bra{\mbox{by the concavity  of $f(t)=t^{\alpha}$}}\\
    &=&\norm{\Y}^{r}\seq{\bra{\frac{1}{p}|\A|^{pr}+\frac{1}{q}|\B|^{qr}}\x,\x}^{\alpha}\\
    &=&\norm{\begin{bmatrix} 0& X_1\\ X_2& 0 \\\end{bmatrix}}^{r}\seq{\begin{bmatrix}\frac{1}{p} |A_2|^{pr}+\frac{1}{q}|B_2|^{qr}& 0\\ 0&
     \frac{1}{p} |A_1|^{pr}+\frac{1}{q}|B_1|^{qr}\\\end{bmatrix}\x,\x}^{\alpha}\\
     &\leq& \max\set{\norm{X_1}^{r},\norm{X_2}^{r}}\max\set{\norm{\frac{1}{p} |A_1|^{pr}+\frac{1}{q}|B_1|^{qr}}^{\alpha},\norm{\frac{1}{p}
     |A_2|^{pr}+\frac{1}{q}|B_2|^{qr}}^{\alpha}}.
\end{eqnarray*}
Now, by taking the supremum over all unit vectors $\x\in\hh$  and employing Lemma \ref{lemma5.1}, we get
  \begin{equation*}
    \omega^{r}\bra{\A^{\alpha}\Y\B^{`\alpha}}\leq \max\set{\norm{X_1}^{r},\norm{X_2}^{r}}\max\set{\norm{\frac{1}{p}
    |A_1|^{pr}+\frac{1}{q}|B_1|^{qr}}^{\alpha},\norm{\frac{1}{p} |A_2|^{pr}+\frac{1}{q}|B_2|^{qr}}^{\alpha}}.
  \end{equation*}
\end{proof}
\begin{remark} Taking $A_1=A_2=A$, $B_1=B_2=B$, and $X_1=X_2=X$ into consideration in Theorem \ref{Theorem2.24}, while employing Lemma
\ref{lemma5.1}, we derive the following inequality:
\begin{equation*}
  \omega^{r}(|A|^{\alpha}X|B|^{\alpha})\leq \norm{X}^{r}\norm{\frac{1}{p} |A|^{pr}+\frac{1}{q}|B|^{qr}}^{\alpha}.
\end{equation*}
This inequality holds for all $\alpha\in [0,1]$, $r\geq 1$, and $p,q\geq 1$, with $\frac{1}{p}+\frac{1}{q}=1$ and $pr,qr\geq 2$. Notably, when $A$ and
$B$ are both positive, the conclusion outlined in \cite[Theorem 3.1]{SMY} is obtained. Consequently, it is evident that Theorem \ref{Theorem2.24}
represents both an enhancement and a broader application of the former.
\end{remark}
\begin{theorem}\label{Theorem 2.26} Let $\A=\begin{bmatrix} 0& A_1\\ A_2& 0 \\\end{bmatrix}, \B=\begin{bmatrix} 0& B_1\\ B_2& 0 \\\end{bmatrix}$
and $\Y=\begin{bmatrix} 0& X_1\\ X_2& 0 \\\end{bmatrix}$ in $\bhh$. Then
  \begin{equation*}
    \omega^{r}\bra{|\A|^{\alpha}\Y|\B|^{1-\alpha}}\leq
    \max\set{\norm{X_1}^{r},\norm{X_2}^{r}}\max\set{\norm{\alpha|A_2|^r+(1-\alpha)|B_2|^{r}},\norm{\alpha|A_1|^r+(1-\alpha)|B_1|^{r}}}.
  \end{equation*}
  for all $r\geq 2$ and $\alpha\in [0,1]$.
\end{theorem}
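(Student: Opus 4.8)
The plan is to follow the template of Theorem~\ref{Theorem2.24}, but to replace the symmetric Young splitting (which there introduces the conjugate exponents $p,q$) by a single weighted arithmetic--geometric mean step keyed to the weights $\alpha$ and $1-\alpha$. I would first fix a unit vector $\x\in\hh$ and use the self-adjointness of $|\A|^{\alpha}$ to write $\seq{|\A|^{\alpha}\Y|\B|^{1-\alpha}\x,\x}=\seq{\Y|\B|^{1-\alpha}\x,|\A|^{\alpha}\x}$. The Cauchy--Schwarz inequality together with the operator-norm bound $\norm{\Y\xi}\le\norm{\Y}\norm{\xi}$ then gives $\abs{\seq{|\A|^{\alpha}\Y|\B|^{1-\alpha}\x,\x}}^{r}\le\norm{\Y}^{r}\seq{|\A|^{2\alpha}\x,\x}^{r/2}\seq{|\B|^{2(1-\alpha)}\x,\x}^{r/2}$, after identifying $\norm{|\A|^{\alpha}\x}^{2}=\seq{|\A|^{2\alpha}\x,\x}$ and similarly for $|\B|$.

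The heart of the argument is estimating the product $\seq{|\A|^{2\alpha}\x,\x}^{r/2}\seq{|\B|^{2(1-\alpha)}\x,\x}^{r/2}$. I would apply the first inequality of Lemma~\ref{J} with weights $\alpha,1-\alpha$ to the quantities $\seq{|\A|^{2\alpha}\x,\x}^{r/(2\alpha)}$ and $\seq{|\B|^{2(1-\alpha)}\x,\x}^{r/(2(1-\alpha))}$ (whose $\alpha$- and $(1-\alpha)$-powers reproduce the two factors), obtaining the bound $\alpha\seq{|\A|^{2\alpha}\x,\x}^{r/(2\alpha)}+(1-\alpha)\seq{|\B|^{2(1-\alpha)}\x,\x}^{r/(2(1-\alpha))}$. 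Since $r\ge 2$ forces $r/(2\alpha)\ge 1$ and $r/(2(1-\alpha))\ge 1$ for every $\alpha\in[0,1]$, the H\"older--McCarthy inequality (Lemma~\ref{Holder}(i)) pulls the powers inside, giving $\seq{|\A|^{2\alpha}\x,\x}^{r/(2\alpha)}\le\seq{|\A|^{r}\x,\x}$ and the analogue for $|\B|$. This collapses the estimate to $\seq{(\alpha|\A|^{r}+(1-\alpha)|\B|^{r})\x,\x}\le\norm{\alpha|\A|^{r}+(1-\alpha)|\B|^{r}}$, the last step using $\norm{\x}=1$ and positivity.

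It then remains to read off the block quantities. Computing $\A^{*}\A$ shows $|\A|^{r}=\mathrm{diag}(|A_2|^{r},|A_1|^{r})$ and $|\B|^{r}=\mathrm{diag}(|B_2|^{r},|B_1|^{r})$, so $\alpha|\A|^{r}+(1-\alpha)|\B|^{r}$ is block diagonal with norm equal to the maximum of its two diagonal-block norms, while $\norm{\Y}=\max\set{\norm{X_1},\norm{X_2}}$. Taking the supremum over unit vectors $\x\in\hh$ converts $\abs{\seq{\cdot\,\x,\x}}^{r}$ into $\omega^{r}(|\A|^{\alpha}\Y|\B|^{1-\alpha})$ and yields exactly the claimed inequality (the block simplifications being those recorded in Lemma~\ref{lemma5.1}).

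The only delicate point---and the source of the hypothesis $r\ge 2$---is the H\"older--McCarthy step: it requires the fractional exponents $r/(2\alpha)$ and $r/(2(1-\alpha))$ to be at least $1$ simultaneously, which fails for $r<2$ near the endpoints of $[0,1]$. Those endpoints $\alpha\in\set{0,1}$ are themselves the degenerate cases to double-check: there one weight vanishes, the corresponding factor must be read as $1$, and the weighted mean reduces to a single term, but the final bound still holds.
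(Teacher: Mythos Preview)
Your proposal is correct and follows essentially the same route as the paper's proof: Cauchy--Schwarz plus $\norm{\Y}$, then a combination of weighted Young (Lemma~\ref{J}) and H\"older--McCarthy (Lemma~\ref{Holder}) to reach $\norm{\Y}^{r}\seq{(\alpha|\A|^{r}+(1-\alpha)|\B|^{r})\x,\x}$, followed by the block-diagonal identification. The only cosmetic difference is the order: the paper applies H\"older--McCarthy first (in its part~(ii) form, with exponent $2\alpha/r\le 1$) and then Young, whereas you apply Young first and then H\"older--McCarthy (in its part~(i) form, with exponent $r/(2\alpha)\ge 1$); the two orderings are equivalent and both isolate $r\ge 2$ as the natural threshold.
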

\begin{proof}Let $\x$ be any unit vector in $\hh$. Then
\begin{eqnarray}\label{Ineq.2.26A}
  \abs{\seq{|\A|^{\alpha}\Y|\B|^{1-\alpha}\x,\x}}^{r} &=&\abs{\seq{\Y|\B|^{1-\alpha}\x,|\A|^{\alpha}\x}}^{r}\nonumber\\
   &\leq&\norm{\Y|\B|^{1-\alpha}\x}^{r}\norm{|\A|^{\alpha}\x}^{r} \bra{\mbox{by Cauchy-Schwarz inequality}}\nonumber\\
   &\leq& \norm{\Y}^{r}\seq{|\B|^{2(1-\alpha)}\x,\x}^{\frac{r}{2}}\seq{|\A|^{2\alpha}\x,\x}^{\frac{r}{2}}\nonumber\\
  &\leq& \norm{\Y}^{r}\seq{|\A|^{r}\x,\x}^{\alpha}\seq{|\B|^{r}\x,\x}^{1-\alpha} \bra{\mbox{by Lemma \ref{Holder}}}\nonumber\\
  &\leq&\norm{\Y}^{r}\sbra{\alpha \seq{|\A|^{r}\x,\x}+(1-\alpha)\seq{|\B|^{r}\x,\x}}\bra{\mbox{by young's inequality}}\nonumber\\
  &=&\norm{\Y}^{r}\seq{\begin{bmatrix} \alpha|A_2|^r+(1-\alpha)|B_2|^{r}&0\\ 0& \alpha|A_1|^r+(1-\alpha)|B_1|^{r} \\\end{bmatrix}\x,\x}\nonumber\\
  &\leq& \max\set{\norm{X_1}^{r},\norm{X_2}^{r}}\max\left\{\norm{\alpha|A_2|^r+(1-\alpha)|B_2|^{r}},\right.\nonumber\\
  &&\left.\norm{\alpha|A_1|^r+(1-\alpha)|B_1|^{r}}\right\}.
\end{eqnarray}
Now, by taking the supremum over all unit vectors $\x\in\hh$  and employing Lemma \ref{lemma5.1}, we get
  \begin{equation*}
    \omega^{r}\bra{|\A|^{\alpha}\Y|\B|^{1-\alpha}}\leq
    \max\set{\norm{X_1}^{r},\norm{X_2}^{r}}\max\set{\norm{\alpha|A_2|^r+(1-\alpha)|B_2|^{r}},\norm{\alpha|A_1|^r+(1-\alpha)|B_1|^{r}}}.
  \end{equation*}
\end{proof}
\begin{remark}  Considering $A_1=A_2=T$, $B_1=B_2=S$, and $X_1=X_2=X$ within Theorem \ref{Theorem 2.26} and using Lemma \ref{lemma5.1}, we obtain the
subsequent inequality:
\begin{equation*}
  \omega^{r}(|T|^{\alpha}X|S|^{1-\alpha})\leq \norm{X}^{r}\norm{\alpha |T|^r+(1-\alpha)|S|^{r}}
\end{equation*}
for any $r\geq 2$ and $\alpha\in [0,1]$. Additionally, if $T$ and $S$ are both positive, then we arrive at
\begin{equation*}
  \omega^{r}(T^{\alpha}XS^{1-\alpha})\leq \norm{X}^{r}\norm{\alpha T^r+(1-\alpha)S^{r}}.
\end{equation*}
This corresponds to the outcome presented in \cite[Theorem 3.1]{SMY}, affirming that our result serves as a comprehensive and enhanced version of it.
\end{remark}
\begin{theorem}\label{Theorem 2.32} Let $\A=\begin{bmatrix} A_1& O\\ O& A_2 \\\end{bmatrix}, \B=\begin{bmatrix} B_1& O\\ O& B_2 \\\end{bmatrix}$
and $\Y=\begin{bmatrix} X_1& O\\ O& X_2 \\\end{bmatrix}$ in $\bhh$, and let $A_1,A_2,B_1$ and $B_2$ be positive. Then {for all $r\geq 2$}, we
have
\begin{eqnarray*}
  \omega^{r}(\A^{1/2}\Y\B^{1/2}) &\leq&\max\set{\norm{X_1}^r,\norm{X_2}^r}\max\set{\omega\bra{\frac{A_1^r+B_1^r}{2}},\omega\bra{\frac{A_2^r+B_2^r}{2}}}
  \\
   &\leq& \frac{1}{2}\max\set{\norm{X_1}^r,\norm{X_2}^r}\max\left\{\omega\bra{\alpha A_1^r+(1-\alpha)B_1^r}+\omega\bra{(1-\alpha) A_1^r+\alpha
   B_1^r},\right.\\
   &&\left.\omega\bra{\alpha A_2^r+(1-\alpha)B_2^r}+\omega\bra{(1-\alpha) A_2^r+\alpha B_2^r}\right\}\\
   &\leq& \frac{1}{2} \max\set{\norm{X_1}^r,\norm{X_2}^r}(\max\set{\norm{\alpha A_1^r+(1-\alpha)B_1^r},\norm{\alpha A_2^r+(1-\alpha)B_2^r}}+\\
   &&\max\set{\norm{(1-\alpha)A_1^r+\alpha B_1^r},\norm{(1-\alpha)A_2^r+\alpha B_2^r}}).
\end{eqnarray*}
\end{theorem}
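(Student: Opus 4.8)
This statement is a chain of three inequalities. The first is the only genuine operator‑theoretic estimate; the second and third are routine and follow from subadditivity of the numerical radius together with the positivity of the operators involved. The plan is to prove them in that order, under the (evidently intended) hypothesis $\alpha\in[0,1]$.

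For the first inequality I would fix a unit vector $\x\in\hh$. Since $A_1,A_2,B_1,B_2\geq0$, the operators $\A$, $\B$ are positive and $\A^{1/2}$, $\B^{1/2}$ are the usual block‑diagonal square roots, so $\A^{1/2}\Y\B^{1/2}$ is block‑diagonal. I would write $\seq{\A^{1/2}\Y\B^{1/2}\x,\x}=\seq{\Y\B^{1/2}\x,\A^{1/2}\x}$ and apply the Cauchy--Schwarz inequality together with $\norm{\Y\B^{1/2}\x}\leq\norm{\Y}\norm{\B^{1/2}\x}$ and the identities $\norm{\A^{1/2}\x}^{2}=\seq{\A\x,\x}$, $\norm{\B^{1/2}\x}^{2}=\seq{\B\x,\x}$ to obtain
\[
\abs{\seq{\A^{1/2}\Y\B^{1/2}\x,\x}}^{r}\leq\norm{\Y}^{r}\seq{\A\x,\x}^{r/2}\seq{\B\x,\x}^{r/2}.
\]
Then I would apply the arithmetic--geometric mean inequality to bound the product by $\tfrac12\bra{\seq{\A\x,\x}^{r}+\seq{\B\x,\x}^{r}}$, and the H\"older--McCarthy inequality (Lemma \ref{Holder}(i), which applies since $r\geq2$) to replace $\seq{\A\x,\x}^{r}\leq\seq{\A^{r}\x,\x}$ and likewise for $\B$. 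Since $\tfrac{\A^{r}+\B^{r}}{2}$ is the positive block‑diagonal operator with diagonal entries $\tfrac{A_i^{r}+B_i^{r}}{2}$, one has $\seq{\tfrac{\A^{r}+\B^{r}}{2}\x,\x}\leq\norm{\tfrac{\A^{r}+\B^{r}}{2}}=\max\set{\norm{\tfrac{A_1^{r}+B_1^{r}}{2}},\norm{\tfrac{A_2^{r}+B_2^{r}}{2}}}$, and the norm of a positive operator equals its numerical radius. Combining these, using $\norm{\Y}=\max\set{\norm{X_1},\norm{X_2}}$, and taking the supremum over unit vectors $\x$ would yield the first inequality.

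For the second inequality I would use, for each $i=1,2$, the algebraic identity $\tfrac{A_i^{r}+B_i^{r}}{2}=\tfrac12\sbra{\bra{\alpha A_i^{r}+(1-\alpha)B_i^{r}}+\bra{(1-\alpha)A_i^{r}+\alpha B_i^{r}}}$, so that the triangle inequality for $\omega(\cdot)$ gives $\omega\bra{\tfrac{A_i^{r}+B_i^{r}}{2}}\leq\tfrac12\bra{\omega\bra{\alpha A_i^{r}+(1-\alpha)B_i^{r}}+\omega\bra{(1-\alpha)A_i^{r}+\alpha B_i^{r}}}$; taking the maximum over $i$ gives the claim. For the third inequality, $\alpha A_i^{r}+(1-\alpha)B_i^{r}$ and $(1-\alpha)A_i^{r}+\alpha B_i^{r}$ are positive operators, so their numerical radii equal their operator norms, and then $\max_i(u_i+v_i)\leq\max_i u_i+\max_i v_i$ for nonnegative reals finishes the proof.

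I do not expect a real obstacle here. The only points that need care are that the exponent $r/2$ must be at least $1$ for H\"older--McCarthy to apply---this is precisely the reason $r\geq2$ is imposed---and that every operator whose numerical radius gets replaced by its norm must be genuinely positive, which uses the positivity of the $A_i,B_i$ together with the fact that nonnegative combinations of positive operators are positive. It would also be worth stating explicitly in the theorem that $\alpha\in[0,1]$ is assumed in the last two inequalities.
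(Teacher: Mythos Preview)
Your argument is correct. It is, however, a genuinely different and more elementary route than the one the paper takes for the first inequality.

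The paper does not bound $\abs{\seq{\A^{1/2}\Y\B^{1/2}\x,\x}}^{r}$ directly. Instead it invokes a separate lemma (Lemma~\ref{Lemma2.33}) of the form $\omega(K)\leq\omega\bra{\tfrac{PKQ^{-1}+P^{-1}KQ}{2}}$ with $K=\A^{1/2}\Y\B^{1/2}$, $P=\A^{\alpha-1/2}$, $Q=\B^{\alpha-1/2}$, to obtain first
\[
\omega^{r}\bra{\A^{1/2}\Y\B^{1/2}}\leq\omega^{r}\bra{\frac{\A^{\alpha}\Y\B^{1-\alpha}+\A^{1-\alpha}\Y\B^{\alpha}}{2}},
\]
and only then bounds the right-hand side via the pointwise estimate~(\ref{Ineq.2.26A}) from Theorem~\ref{Theorem 2.26} and convexity of $t^{r}$, arriving at the same quantity $\norm{\Y}^{r}\seq{\tfrac{\A^{r}+\B^{r}}{2}\x,\x}$ that you reach in two lines. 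Your approach bypasses Lemma~\ref{Lemma2.33} entirely; in particular it avoids the paper's tacit assumption that $\A,\B$ are invertible (needed to form $\A^{\alpha-1/2}$, $\B^{\alpha-1/2}$), which positivity alone does not guarantee. The paper's detour does, on the other hand, make the appearance of $\alpha$ in the later inequalities feel less ad hoc, since the $\alpha$-averaged expression is already present as an intermediate bound.

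One small remark: in your own argument the H\"older--McCarthy step is applied with exponent $r$ (after AM--GM has produced $\seq{\A\x,\x}^{r}$), so you only need $r\geq1$, not $r/2\geq1$ as you wrote at the end. The restriction $r\geq2$ is genuinely needed in the paper's route through~(\ref{Ineq.2.26A}), but your direct argument actually proves the first inequality under the weaker hypothesis $r\geq1$.
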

In order to prove Theorem \ref{Theorem 2.32}, we need the following lemma from \cite{SMY}.
\begin{lemma}\label{Lemma2.33}  Let $P,Q\in\bh$ be invertible self-adjoint operators and $K\in\bh$. Then
\begin{equation}\label{Ineq.2.32}
  \omega(K)\leq \omega\bra{\frac{PKQ^{-1}+P^{-1}KQ}{2}}.
\end{equation}
\end{lemma}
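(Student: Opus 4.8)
The plan is to reduce the inequality, via $\omega(T)=\sup_{\norm{x}=1}\abs{\seq{Tx,x}}$, to showing $\abs{\seq{Kx,x}}\le\omega\bra{\frac{PKQ^{-1}+P^{-1}KQ}{2}}$ for every unit vector $x$, and to obtain this from an integral representation of $K$ in terms of $A:=\frac{1}{2}(PKQ^{-1}+P^{-1}KQ)$. First I would treat the case $P,Q>0$ and set $L=\ln P$, $M=\ln Q$ (bounded self-adjoint, since $P,Q$ are positive and invertible). Introducing the bounded superoperator $\delta$ on $\bh$ defined by $\delta(X)=LX-XM$, the commuting left/right multiplications give $e^{\delta}(X)=e^{L}Xe^{-M}=PXQ^{-1}$ and $e^{-\delta}(X)=P^{-1}XQ$, so that $A=\cosh(\delta)K$.

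The key step is to invert $\cosh(\delta)$. Since $\delta$ is a difference of commuting self-adjoint multiplications it has real spectrum, and the hyperbolic secant is the characteristic function of a symmetric probability measure $\nu$ on $\R$, i.e.\ $\operatorname{sech}(s)=\int_{\R}e^{is\xi}\,d\nu(\xi)$ for all real $s$. Applying the functional calculus (equivalently, integrating the uniformly bounded family $\xi\mapsto e^{i\xi\delta}$, each of operator norm one, against $\nu$) yields
\begin{equation*}
K=\operatorname{sech}(\delta)A=\int_{\R}e^{i\xi\delta}A\,d\nu(\xi)=\int_{\R}P^{i\xi}AQ^{-i\xi}\,d\nu(\xi).
\end{equation*}
When $P=Q$ this representation closes the argument immediately: here $P^{i\xi}$ is unitary with $(P^{i\xi})^{*}=P^{-i\xi}$, so each integrand $P^{i\xi}AP^{-i\xi}$ is a unitary conjugate of $A$, whence for any unit vector $x$, writing $x_{\xi}=P^{-i\xi}x$ (still a unit vector),
\begin{equation*}
\abs{\seq{Kx,x}}\le\int_{\R}\abs{\seq{A x_{\xi},x_{\xi}}}\,d\nu(\xi)\le\int_{\R}\omega(A)\,d\nu(\xi)=\omega(A),
\end{equation*}
using that $\nu$ is a probability measure. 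Taking the supremum over $x$ gives $\omega(K)\le\omega(A)$.

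The main obstacle is the genuinely general case $P\neq Q$ (and, equally, the passage from positive $P,Q$ to arbitrary invertible self-adjoint ones, where the signs $J_{P}=\operatorname{sgn}P$, $J_{Q}=\operatorname{sgn}Q$ enter). Then the integrand $P^{i\xi}AQ^{-i\xi}$ involves two distinct unitary families, so it is no longer a single unitary conjugation of $A$; the best the Cauchy--Schwarz estimate on $\seq{A(Q^{-i\xi}x),P^{-i\xi}x}$ gives is $\norm{A}$ rather than $\omega(A)$, and symmetrizing in $\xi$ only controls $\omega\bigl(\tfrac12(K+K'')\bigr)$, where $K''$ is the operator defined by $A=\tfrac12(QK''P^{-1}+Q^{-1}K''P)$. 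Overcoming this is the crux: one must exploit the symmetry of $\nu$ together with a polarization (numerical-radius Buzano) argument, or pass to $\h\oplus\h$ with $\mathcal{S}=\operatorname{diag}(P,Q)$ to reduce to the equal-operator case while keeping track of the resulting off-diagonal block, as carried out in \cite{SMY}. A useful consistency check on the whole scheme is the scalar case $\dim\h=1$, where $A=K\,\frac{P^{2}+Q^{2}}{2PQ}$ and the inequality is exactly the arithmetic--geometric mean bound $\frac{P^{2}+Q^{2}}{2\abs{PQ}}\ge1$.
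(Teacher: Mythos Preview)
Your argument is complete and correct only in the special case $P=Q$: the integral representation $K=\int_{\R}P^{i\xi}AP^{-i\xi}\,d\nu(\xi)$ together with unitarity of $P^{i\xi}$ indeed gives $\omega(K)\le\omega(A)$, and this is a clean, standard Heinz-type technique. For $P\neq Q$, however, you have not given a proof. You correctly identify that $P^{i\xi}AQ^{-i\xi}$ is no longer a unitary conjugate of $A$, and then you defer to \cite{SMY} or gesture at two strategies without carrying either out. The $\h\oplus\h$ reduction you suggest does not close the gap as stated: with $\tilde K=\bigl(\begin{smallmatrix}0&K\\0&0\end{smallmatrix}\bigr)$ and $\mathcal{S}=\operatorname{diag}(P,Q)$ one does reduce to the equal-operator case, but since $\tilde K^{2}=0$ we have $\omega(\tilde K)=\tfrac12\|K\|$ and likewise for the transformed block, so you only recover the norm inequality $\|K\|\le\|A\|$, not the numerical-radius one. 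Using the symmetric off-diagonal $\tilde K=\bigl(\begin{smallmatrix}0&K\\K&0\end{smallmatrix}\bigr)$ instead yields $\omega(K)\le\omega\bigl(\begin{smallmatrix}0&A\\B&0\end{smallmatrix}\bigr)$ with $B=\tfrac12(QKP^{-1}+Q^{-1}KP)$, which is again not the stated conclusion.

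For context, the paper does not prove this lemma at all: it is quoted from \cite{SMY} and used as a black box in the proof of Theorem~\ref{Theorem 2.32}. So there is no in-paper argument to compare yours against. Your treatment of the case $P=Q$ is genuinely informative, but the lemma as stated, with two independent invertible self-adjoint operators, requires the argument from the cited source, and your proposal does not supply it.
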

\begin{proof}[Proof of Theorem \ref{Theorem 2.32}]Note that $\A$ and $\B$ are self-adjoint and invertible since they are positive.
Therefore, by Lemma \ref{Lemma2.33} with $K=\A^{\frac{1}{2}}\Y\B^{\frac{1}{2}}, P=\A^{\alpha-\frac{1}{2}}$ and $Q=\B^{\alpha-\frac{1}{2}}$, we have
\begin{eqnarray*}
  \omega^{r}\bra{\A^{\frac{1}{2}}\Y\B^{\frac{1}{2}}} &\leq &\omega^{r}\bra{\frac{\A^{\alpha-\frac{1}{2}}\A^{\frac{1}{2}}\Y\B^{\frac{1}{2}}
  \B^{\frac{1}{2}-\alpha}+\A^{\frac{1}{2}-\alpha}\A^{\frac{1}{2}}\Y\B^{\frac{1}{2}}\B^{\alpha-\frac{1}{2}}}{2}} \\
   &=&\omega^{r}\bra{\frac{\A^{\alpha}\Y\B^{1-\alpha}+\A^{1-\alpha}\Y\B^{\alpha}}{2}}.
\end{eqnarray*}
On the other hand, by inequality (\ref{Ineq.2.26A}), for any $r\geq 2$ we have
\begin{equation*}
  \abs{\seq{\A^{\alpha}\Y\B^{1-\alpha}}}\leq \norm{\Y}^{r}\sbra{\alpha \seq{\A^{r}\x,\x}+(1-\alpha)\seq{\B^{r}\x,\x}}.
\end{equation*}
Consequently
\begin{eqnarray*}
  &&\abs{\seq{\bra{\frac{\A^{\alpha}\Y\B^{1-\alpha}+\A^{1-\alpha}\Y\B^{\alpha}}{2}}\x,\x}}^{r} \leq
  \bra{\frac{\abs{\seq{\A^{\alpha}\Y\B^{1-\alpha}\x,\x}}+\abs{\seq{\A^{1-\alpha}\Y\B^{\alpha}}}}{2}}^{r}\\
  &&\leq \frac{\abs{\seq{\A^{\alpha}\Y\B^{1-\alpha}\x,\x}}^{r}+\abs{\seq{\A^{1-\alpha}\Y\B^{\alpha}}}^{r}}{2}\bra{\mbox{by convexity of $t^{r}$}}\\
  &&\leq \frac{\norm{\Y}^{r}}{2}\sbra{\bra{\alpha \seq{\A^{r}\x,\x}+(1-\alpha)\seq{\B^{r}\x,\x}}+\bra{(1-\alpha)
  \seq{\A^{r}\x,\x}+\alpha\seq{\B^{r}\x,\x}}}\\
  &&\leq \norm{\Y}^{r}\seq{\bra{\frac{\A^r+\B^r}{2}}\x,\x}.
\end{eqnarray*}
Hence we obtain
\begin{eqnarray*}
  \omega^{r}\bra{\frac{\A^{\alpha}\Y\B^{1-\alpha}+\A^{1-\alpha}\Y\B^{\alpha}}{2}} &\leq&  \norm{\Y}^{r}\omega\bra{\frac{\A^r+\B^r}{2}}\\
   &\leq&\frac{\norm{\Y}^{r}}{2}\bra{\omega\bra{\alpha \A^r+(1-\alpha)\B^r}+\omega\bra{(1-\alpha) \A^r+\alpha \B^r}}\\
   &\leq& \frac{\norm{\Y}^{r}}{2}\bra{\norm{\alpha \A^r+(1-\alpha)\B^r}+\norm{(1-\alpha) \A^r+\alpha \B^r}},
\end{eqnarray*}
where
\begin{eqnarray*}
   \norm{\Y}^{r}&=&\max\set{\norm{X_1}^r,\norm{X_2}^r}, \\
   \omega\bra{\alpha \A^r+(1-\alpha)\B^r} &=& \max\set{\omega\bra{\alpha A_1^r+(1-\alpha)B_1^r},\omega\bra{\alpha A_2^r+(1-\alpha)B_2^r}},\\
  \omega\bra{(1-\alpha) \A^r+\alpha \B^r}&=&\max\set{\omega\bra{(1-\alpha)A_1^r+\alpha B_1^r},\omega\bra{{(1-\alpha)A_2^r+\alpha B_2^r}}},\\
  \norm{\alpha \A^r+(1-\alpha)\B^r} &=& \max\set{\norm{\alpha A_1^r+(1-\alpha)B_1^r},\norm{\alpha A_2^r+(1-\alpha)B_2^r}},\\
  \norm{(1-\alpha) \A^r+\alpha \B^r}&=&\max\set{\norm{(1-\alpha)A_1^r+\alpha B_1^r},\norm{(1-\alpha)A_2^r+\alpha B_2^r}}.
\end{eqnarray*}
\end{proof}
\begin{theorem} Let $A_i,B_i,X_i\in\bh$, $1\leq i\leq n$,  and let $f,g$ be as in Lemma \ref{Lem2.3}. Then
\begin{equation}\label{R99}
  \omega^{r}\bra{\sum_{i=1}^{n}A_i^*X_iB_i}\leq \omega\bra{\frac{1}{p}\sbra{\sum_{i=1}^{n}B_i^*f^2(|X_i|)B_i}^{pr/2}+
  \frac{1}{q}\sbra{\sum_{i=1}^{n}A_i^*g^2(|X_i^*|)A_i}^{qr/2}}
\end{equation}
for $r\geq 1$, $\frac{1}{p}+\frac{1}{q}=1$ and $pr,qr\geq 2$.
\end{theorem}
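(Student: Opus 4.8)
The plan is to work directly from the definition of the numerical radius: fix a unit vector $\x\in\h$, bound $\abs{\seq{\bra{\sum_{i=1}^{n}A_i^*X_iB_i}\x,\x}}^r$ pointwise, and take the supremum only at the very end. First I would rewrite each summand as $\seq{A_i^*X_iB_i\x,\x}=\seq{X_iB_i\x,A_i\x}$ and apply the triangle inequality to pass to $\sum_{i=1}^n\abs{\seq{X_iB_i\x,A_i\x}}$.

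The key analytic input is Lemma \ref{Lem2.3}(ii), the generalized mixed Schwarz inequality, applied to each term with $T=X_i$, first vector $B_i\x$, and second vector $A_i\x$. This yields $\abs{\seq{X_iB_i\x,A_i\x}}\leq\norm{f(|X_i|)B_i\x}\,\norm{g(|X_i^*|)A_i\x}$. Summing in $i$ and applying the Cauchy-Schwarz inequality for finite sequences produces the product $\bra{\sum_{i=1}^n\norm{f(|X_i|)B_i\x}^2}^{1/2}\bra{\sum_{i=1}^n\norm{g(|X_i^*|)A_i\x}^2}^{1/2}$. The crucial observation is that, since $f(|X_i|)$ and $g(|X_i^*|)$ are self-adjoint, one has $\norm{f(|X_i|)B_i\x}^2=\seq{B_i^*f^2(|X_i|)B_i\x,\x}$ and likewise for the other factor; the two sums therefore collapse to $\seq{P\x,\x}$ and $\seq{Q\x,\x}$, where $P=\sum_{i=1}^n B_i^*f^2(|X_i|)B_i$ and $Q=\sum_{i=1}^n A_i^*g^2(|X_i^*|)A_i$ are positive operators.

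At this stage the estimate reads $\abs{\seq{(\sum A_i^*X_iB_i)\x,\x}}\leq\seq{P\x,\x}^{1/2}\seq{Q\x,\x}^{1/2}$. Raising to the power $r$ and applying Young's inequality with conjugate exponents $p,q$ bounds the right-hand side by $\seq{P\x,\x}^{pr/2}/p+\seq{Q\x,\x}^{qr/2}/q$; then, because $pr/2\geq 1$ and $qr/2\geq 1$, Lemma \ref{Holder}(i) (the Hölder-McCarthy inequality) upgrades these to $\seq{P^{pr/2}\x,\x}$ and $\seq{Q^{qr/2}\x,\x}$. Collecting terms gives the upper bound $\seq{\bra{\frac1pP^{pr/2}+\frac1qQ^{qr/2}}\x,\x}$, which for the positive operator $S=\frac1pP^{pr/2}+\frac1qQ^{qr/2}$ is at most $\omega(S)$. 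Taking the supremum over all unit vectors $\x$ then yields exactly inequality (\ref{R99}).

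I do not anticipate a genuine obstacle; the argument is a clean chain of standard inequalities. The one point demanding care is the bookkeeping of exponents: Young's inequality must be invoked only \emph{after} raising to the $r$-th power, so that the resulting exponents are precisely $pr/2$ and $qr/2$, and the hypotheses $pr,qr\geq 2$ are exactly what guarantees these exponents are $\geq 1$, which is the threshold required for Hölder-McCarthy. I would also state explicitly that $P,Q\geq 0$, so that $\seq{S\x,\x}\leq\omega(S)$ is legitimate for the final positive combination.
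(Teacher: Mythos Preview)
Your proof is correct, but it takes a different route from the paper's. The paper encodes the sum $\sum_{i=1}^n A_i^*X_iB_i$ as a single product $\A^*\Y\B$ of $n\times n$ block operators on $\bigoplus_n\h$ (with $\A,\B$ having all their nonzero entries in the first column and $\Y=\mathrm{diag}(X_1,\dots,X_n)$), applies the mixed Schwarz inequality \emph{once} to this product, and then reads off $P$ and $Q$ as the $(1,1)$ entries of $\B^*f^2(|\Y|)\B$ and $\A^*g^2(|\Y^*|)\A$. In contrast, you work directly in $\h$, split the sum via the triangle inequality, apply mixed Schwarz termwise, and then recombine using Cauchy--Schwarz for finite sequences. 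Your approach is more elementary and self-contained (no auxiliary block space is needed), while the paper's column-matrix trick packages the triangle-inequality/Cauchy--Schwarz step into a single application of Lemma~\ref{Lem2.3}(ii); both arrive at the identical pointwise bound $\seq{P\x,\x}^{1/2}\seq{Q\x,\x}^{1/2}$, and from there the Young/H\"older--McCarthy chain is the same in both proofs.
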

\begin{proof} Let $\A=\begin{bmatrix}A_1 & 0 & \cdots & 0\\A_2 & 0 & \cdots & 0\\\vdots& 0 & \cdots & 0 \\A_n & 0 & \cdots & 0 \\\end{bmatrix}$,
$\B=\begin{bmatrix}B_1 & 0 & \cdots & 0\\B_2 & 0 & \cdots & 0\\\vdots& 0 & \cdots & 0 \\B_n & 0 & \cdots & 0 \\\end{bmatrix}$,
$\Y=\begin{bmatrix}X_1 & 0 & \cdots & 0\\0 & X_2 & \cdots & 0\\\vdots& 0 & \ddots & 0 \\0 & 0 & \cdots & X_n \\\end{bmatrix}$, and let
$\x=\begin{bmatrix}x_1 \\x_2 \\\vdots \\x_n \\\end{bmatrix}\in \bigoplus_{n}\h$ be a unit vector. Then
\begin{eqnarray*}
  &&|\seq{\A^*\Y\B\x,\x}|^{r}=|\seq{\Y\B\x,\A\x}|^r\leq \seq{f^2(|\Y|)\B\x,\B\x}^{r/2}\seq{g^2(|\Y^*|)\A\x,\A\x}^{r/2}\bra{\mbox{by Lemma \ref{Lem2.3}}}
  \\
   &\leq& \frac{1}{p}\seq{f^2(|\Y|)\B\x,\B\x}^{pr/2}+\frac{1}{q}\seq{g^2(|\Y^*|)\A\x,\A\x}^{qr/2} \bra{\mbox{by Young's inequality}}\\
   &\leq& \frac{1}{p}\seq{(\B^*f^2(|\Y|)\B)^{pr/2}\x,\x}+\frac{1}{q}\seq{(\A^*g^2(|\Y^*|)\A)^{qr/2}\x,\x}\bra{\mbox{by Lemma \ref{Holder}}}\\
   &=&\seq{\bra{\frac{1}{p}(\B^*f^2(|\Y|)\B)^{pr/2}+\frac{1}{q}(\A^*g^2(|\Y^*|)\A)^{qr/2}}\x,\x}\\
   &=&\seq{\begin{bmatrix} \frac{1}{p}\bra{\sum_{i=1}^{n}B_i^*f^2(|X_i|)B_i}^{pr/2}
   +\frac{1}{q}\bra{\sum_{i=1}^{n}A_i^*g^2(|X_i^*|)A_i}^{qr/2}& 0 & \cdots & 0\\0 & 0 & \cdots & 0\\\vdots& 0 & \cdots & 0 \\0 & 0 & \cdots & 0
   \\\end{bmatrix}\x,\x}\\
   &\leq& \omega\bra{\frac{1}{p}\bra{\sum_{i=1}^{n}B_i^*f^2(|X_i|)B_i}^{pr/2}
   +\frac{1}{q}\bra{\sum_{i=1}^{n}A_i^*g^2(|X_i^*|)A_i}^{qr/2}}.
\end{eqnarray*}
Taking the supremum over all unit vectors $\x\in \bigoplus_{n}\h$, we get the desired result.
\end{proof}
\section{Applications}

The numerical radius inequalities developed in this work have broad applications across mathematical physics and engineering. In this section, we demonstrate their utility in quantum mechanics, integro-differential equations, and fractional calculus. The derived bounds serve as essential tools for analyzing operator matrices in these domains, particularly in stability analysis and solution estimation (see \cite{Evans, Kato}).

Our approach builds on foundational contributions in the literature. The link between numerical radius inequalities and quantum system stability was first established by \cite{Halmos} and later advanced by \cite{Kittaneh}. Applications to integro-differential equations were introduced in \cite{Drag4}, while extensions to fractional calculus follow the framework of \cite{Had-Kit}. By unifying and extending these perspectives, our new inequalities provide sharper tools for stability analysis and numerical approximation.

The selected applications highlight both the computational advantages and theoretical insights offered by our results. Each example illustrates how these bounds enhance the analysis of stability and approximation in key areas of mathematical physics.
\subsection{Application of Numerical Radius Inequalities in Atomic Energy}\hfill

The study of numerical radius inequalities for operator matrices has significant applications in quantum mechanics and atomic energy research. In particular, the bounds on numerical radii can be used to analyze the stability and energy levels of quantum systems, which are fundamental to understanding atomic behavior and nuclear reactions.

\subsubsection{Example: Quantum System Stability Analysis}

Consider a quantum system described by the Hamiltonian operator \( H \), which can be represented as a \( 2 \times 2 \) block matrix:

\[
H = \begin{bmatrix} H_{11} & H_{12} \\ H_{21} & H_{22} \end{bmatrix},
\]

where \( H_{11} \) and \( H_{22} \) represent the self-energy of two subsystems, and \( H_{12} \), \( H_{21} \) represent the interaction between them. The numerical radius \( \omega(H) \) provides a measure of the system's energy dispersion and stability.

\subsection*{Problem Statement}

We aim to find an upper bound for the numerical radius of \( H \) to ensure the system remains stable under perturbations. Using Theorem \ref{Theorem2.10} from the paper, we can derive such a bound.

\subsection*{Solution}

By Theorem 2.10, for \( H = [H_{ij}]_{2 \times 2} \), the numerical radius satisfies:

\[
\omega^s(H) \leq 2^{2s-2} \omega([h_{ij}]), \quad \text{where} \quad h_{ij} =
\begin{cases}
\frac{1}{2}\omega\left(f^{2s}(|H_{ii}|) + g^{2s}(|H_{ii}^*|)\right), & \text{if } i = j, \\
\|H_{ij}\|^s, & \text{if } i \neq j.
\end{cases}
\]

For simplicity, let \( s = 1 \), \( f(t) = g(t) = \sqrt{t} \), and assume \( H_{12} = H_{21}^* \). Then:

\[
\omega(H) \leq \omega\left(\begin{bmatrix} \frac{1}{2}\omega(|H_{11}| + |H_{11}^*|) & \|H_{12}\| \\ \|H_{21}\| & \frac{1}{2}\omega(|H_{22}| + |H_{22}^*|) \end{bmatrix}\right).
\]

\subsection*{Numerical Example}

Let the subsystems and interactions be defined as:

\[
H_{11} = \begin{pmatrix} 3 & 0 \\ 0 & 1 \end{pmatrix}, \quad H_{22} = \begin{pmatrix} 2 & -1 \\ -1 & 2 \end{pmatrix}, \quad H_{12} = \begin{pmatrix} 0 & 1 \\ 0 & 0 \end{pmatrix}, \quad H_{21} = H_{12}^*.
\]

\begin{enumerate}
    \item \textbf{Compute \( \omega(H_{11}) \) and \( \omega(H_{22}) \):}
    \begin{itemize}
        \item For \( H_{11} \), the eigenvalues are 3 and 1, so \( \omega(H_{11}) = 3 \).
        \item For \( H_{22} \), the eigenvalues are 1 and 3, so \( \omega(H_{22}) = 3 \).
    \end{itemize}

    \item \textbf{Compute \( \|H_{12}\| \) and \( \|H_{21}\| \):}
    \begin{itemize}
        \item \( \|H_{12}\| = \|H_{21}\| = 1 \).
    \end{itemize}

    \item \textbf{Construct the matrix \([h_{ij}]\):}
    \[
    [h_{ij}] = \begin{bmatrix} \frac{1}{2}\omega(|H_{11}| + |H_{11}^*|) & \|H_{12}\| \\ \|H_{21}\| & \frac{1}{2}\omega(|H_{22}| + |H_{22}^*|) \end{bmatrix} = \begin{bmatrix} 3 & 1 \\ 1 & 3 \end{bmatrix}.
    \]

    \item \textbf{Compute \( \omega([h_{ij}]) \):}
    The eigenvalues of \([h_{ij}]\) are 2 and 4, so \( \omega([h_{ij}]) = 4 \).

    \item \textbf{Final bound:}
    \[
    \omega(H) \leq 4.
    \]
\end{enumerate}

\subsection*{Interpretation}

The numerical radius \( \omega(H) \leq 4 \) provides an upper limit on the energy dispersion of the quantum system. This ensures that the system's energy levels remain bounded, which is crucial for stability in atomic energy applications.
\subsection{Applications in Integro-Differential Equations}\hfill

Numerical radius inequalities for operator matrices have important applications in the analysis of integro-differential equations, particularly in studying solution stability and convergence properties. These inequalities provide bounds on operator norms that are crucial for estimating solution behaviors.

\subsubsection{Application to Volterra Integro-Differential Equations}
Consider the Volterra integro-differential equation of the form:
\begin{equation}\label{volterra-eq}
    \frac{dy}{dt} = A(t)y(t) + \int_{0}^{t} K(t,s)y(s)ds, \quad y(0) = y_0
\end{equation}
where $A(t)$ is a time-dependent operator and $K(t,s)$ is the kernel operator.

\subsubsection{Operator Matrix Formulation}
We can reformulate this problem using operator matrices. Let $\mathcal{H} = L^2([0,T])$ and define the operator $\mathcal{L}$ on $\mathcal{H} \oplus \mathcal{H}$ by:
\[
\mathcal{L} = \begin{bmatrix}
A & K \\
I & 0
\end{bmatrix}
\]
where $A$ is the multiplication operator by $A(t)$, $K$ is the integral operator with kernel $K(t,s)$, and $I$ is the identity operator.

\subsubsection{Stability Analysis}
The numerical radius $\omega(\mathcal{L})$ provides crucial information about the system's stability. Using Theorem \ref{Theorem2.10}, we can estimate:

\[
\omega(\mathcal{L}) \leq \omega\left(\begin{bmatrix}
\frac{1}{2}(\omega(A) + \omega(A^*)) & \|K\| \\
1 & \frac{1}{2}
\end{bmatrix}\right)
\]

\subsubsection{Comprehensive Example}
Consider the specific case where:
\begin{align*}
A(t) &= \begin{bmatrix}
-1 & t \\
0 & -2
\end{bmatrix}, \quad 0 \leq t \leq 1 \\
K(t,s) &= \begin{bmatrix}
e^{-(t-s)} & 0 \\
0 & e^{-2(t-s)}
\end{bmatrix}
\end{align*}

\subsubsection{Solution}
\begin{enumerate}
    \item Compute $\omega(A(t))$:
    For fixed $t$, $A(t)$ is triangular, so its numerical radius equals its spectral radius:
    \[
    \omega(A(t)) = \max\{1,2\} = 2
    \]

    \item Compute $\|K\|$:
    The integral operator $K$ has norm:
    \[
    \|K\| = \sup_{t \in [0,1]} \int_{0}^{t} \|K(t,s)\|ds = \sup_{t \in [0,1]} \int_{0}^{t} e^{-(t-s)}ds = 1 - e^{-1}
    \]

    \item Apply Theorem \ref{Theorem2.10}:
    Construct the comparison matrix:
    \[
    M = \begin{bmatrix}
    \frac{1}{2}(\omega(A) + \omega(A^*)) & \|K\| \\
    1 & \frac{1}{2}
    \end{bmatrix} = \begin{bmatrix}
    2 & 1 - e^{-1} \\
    1 & 0.5
    \end{bmatrix}
    \]

    \item Compute $\omega(M)$:
    The numerical radius of $M$ is bounded by:
    \[
    \omega(M) \leq \frac{1}{2}\left(2 + 0.5 + \sqrt{(2-0.5)^2 + (1 - e^{-1} + 1)^2}\right) \approx 2.3
    \]
\end{enumerate}

\subsubsection{Interpretation}
The bound $\omega(\mathcal{L}) \leq 2.3$ implies that solutions to the integro-differential equation will grow at most exponentially with rate $2.3$. This guarantees stability for sufficiently small time intervals and provides a quantitative estimate for numerical solution methods.

\subsubsection{Convergence of Numerical Methods}
These bounds are particularly useful when analyzing discretization schemes for integro-differential equations. The numerical radius estimates:
\begin{itemize}
    \item Provide stability constraints for time-step selection
    \item Yield error bounds for approximate solutions
    \item Help establish convergence rates for iterative methods
\end{itemize}

For instance, when applying a Runge-Kutta method to \eqref{volterra-eq}, the numerical radius bound ensures the method remains stable if the time step $h$ satisfies $h\omega(\mathcal{L}) < C$ for some constant $C$ depending on the method.
\subsection{Application to Fractional Integro-Differential Equations}\hfill

Numerical radius inequalities play a crucial role in analyzing the stability and convergence of solutions to fractional integro-differential equations. Corollary \ref{Cor3.18} provides refined bounds that can be applied to operator matrices arising in such equations.

\subsubsection{Problem Formulation}
Consider the fractional integro-differential equation:
\begin{equation}\label{frac-eq}
    D^\alpha y(t) = A(t)y(t) + \int_0^t K(t,s)y(s)ds, \quad y(0) = y_0
\end{equation}
where:
\begin{itemize}
    \item $D^\alpha$ is the Caputo fractional derivative of order $\alpha \in (0,1)$
    \item $A(t) \in \mathcal{B}(\mathcal{H})$ is a time-dependent operator
    \item $K(t,s)$ is the kernel operator
\end{itemize}

\subsubsection{Operator Matrix Representation}
We can represent the solution operator as a $2 \times 2$ block matrix:
\[
\mathcal{T} = \begin{bmatrix}
A & K \\
I & 0
\end{bmatrix}
\]
where $I$ is the identity operator. The numerical radius $\omega(\mathcal{T})$ governs the system's stability.

\subsubsection{Application of Corollary \ref{Cor3.18}}
Applying Corollary 3.18 with $r=2$ and $\mu=1/2$ gives:
\[
\omega^4(\mathcal{T}) \leq \frac{3\beta+2}{16(\beta+1)}\norm{|A|^4 + |K^*|^4} + \frac{2\beta+3}{16(\beta+1)}\norm{|A|^2 + |K^*|^2}\omega(\mathcal{T}^2) + \frac{1}{8}\omega^2(\mathcal{T}^2)
\]

\subsubsection{Comprehensive Example}
Consider the fractional system with:
\begin{align*}
A &= \begin{pmatrix} -1 & 0 \\ 0 & -2 \end{pmatrix}, \\
K(t,s) &= \begin{pmatrix} (t-s)^{\alpha-1} & 0 \\ 0 & (t-s)^{\alpha-1} \end{pmatrix}
\end{align*}

\subsubsection*{Solution Steps}
\begin{enumerate}
    \item Compute operator norms:
    \[
    \norm{A} = 2, \quad \norm{K} = \frac{T^\alpha}{\alpha}
    \]

    \item Apply Corollary \ref{Cor3.18} with $\beta=1$:
    \[
    \omega^4(\mathcal{T}) \leq \frac{5}{32}\norm{|A|^4 + |K^*|^4} + \frac{5}{32}\norm{|A|^2 + |K^*|^2}\omega(\mathcal{T}^2) + \frac{1}{8}\omega^2(\mathcal{T}^2)
    \]

    \item For $T=1$, $\alpha=0.5$:
    \[
    \omega^4(\mathcal{T}) \leq \frac{5}{32}(16 + 4) + \frac{5}{32}(4 + 2)\omega(\mathcal{T}^2) + \frac{1}{8}\omega^2(\mathcal{T}^2)
    \]

    \item This yields the quadratic inequality in $\omega^2(\mathcal{T})$:
    \[
    \omega^4(\mathcal{T}) \leq 3.125 + 0.9375\omega(\mathcal{T}^2) + 0.125\omega^2(\mathcal{T}^2)
    \]
\end{enumerate}

\subsubsection{Interpretation}
The derived bound:
\begin{itemize}
    \item Provides a quantitative stability criterion
    \item Ensures solution remains bounded when $\omega(\mathcal{T}) < \rho$ (solution of the inequality)
    \item Guides time-step selection in numerical schemes
\end{itemize}

This application demonstrates how Corollary \ref{Cor3.18} yields practical stability bounds for fractional integro-differential equations, extending the classical results to the fractional calculus setting.
\subsection{Application to Fractional Partial Differential Equations}\hfill

Numerical radius inequalities for operator matrices are particularly valuable in analyzing the stability of solutions to fractional partial differential equations (FPDEs). The results from Section 3, especially Theorem \ref{Theorem2.10}, provide powerful tools for establishing solution bounds in such systems.

\subsubsection{Problem Formulation}
Consider the time-fractional diffusion equation with a nonlinear source term:
\begin{equation}\label{frac-pde-main}
    \frac{\partial^\alpha u}{\partial t^\alpha} = \nabla \cdot (D(x)\nabla u) + f(x,u), \quad \text{in } \Omega \times (0,T]
\end{equation}
where:
\begin{itemize}
    \item $\frac{\partial^\alpha}{\partial t^\alpha}$ is the Caputo fractional derivative of order $\alpha$ ($0 < \alpha < 1$)
    \item $D(x)$ is a spatially varying diffusion coefficient
    \item $f(x,u)$ is a nonlinear reaction term
    \item Boundary conditions: $u|_{\partial\Omega} = 0$
    \item Initial condition: $u(x,0) = u_0(x)$
\end{itemize}

\subsubsection{Semi-Discrete Formulation}
Using finite element spatial discretization, we obtain the matrix system:
\[
D^\alpha \mathbf{u}(t) = -\mathbf{K}\mathbf{u}(t) + \mathbf{F}(\mathbf{u}(t))
\]
where:
\begin{itemize}
    \item $\mathbf{K}$ is the stiffness matrix (discrete Laplacian)
    \item $\mathbf{F}(\mathbf{u})$ represents the nonlinear term
\end{itemize}

This can be represented as an operator matrix:
\[
\mathcal{A} = \begin{bmatrix}
-\mathbf{K} & \mathbf{F}(\mathbf{u}) \\
\mathbf{I} & \mathbf{0}
\end{bmatrix}
\]

\subsubsection{Stability Analysis via Numerical Radius}
Applying Theorem \ref{Theorem2.10} with $s=1$, we obtain:
\[
\omega(\mathcal{A}) \leq \omega\left(\begin{bmatrix}
\frac{1}{2}\omega(-\mathbf{K}+\mathbf{K}^*) & \|\mathbf{F}(\mathbf{u})\| \\
1 & 0
\end{bmatrix}\right)
\]

\subsubsection{Concrete Example}
Consider the 1D case with $\Omega = (0,1)$, $D(x) \equiv 1$, and $f(u) = u - u^3$.

\begin{enumerate}
    \item \textbf{Discretization}:
    Using linear finite elements with $N=3$ nodes ($h=0.25$), the stiffness matrix is:
    \[
    \mathbf{K} = \frac{1}{h}\begin{bmatrix}
    2 & -1 & 0 \\
    -1 & 2 & -1 \\
    0 & -1 & 2
    \end{bmatrix} = 4\begin{bmatrix}
    2 & -1 & 0 \\
    -1 & 2 & -1 \\
    0 & -1 & 2
    \end{bmatrix}
    \]

    \item \textbf{Nonlinear Term}:
The Jacobian of $\mathbf{F}$ at equilibrium is:
\[
\mathbf{J} = \text{diag}(1 - 3u_i^2)
\]

\item \textbf{Compute Components}:
\begin{itemize}
    \item $\omega(-\mathbf{K}) = \| \mathbf{K} \| = \frac{4(2 + 2\cos(\pi h))}{h} \approx 16$ (for small $h$)
    \item $\|\mathbf{J}\| \leq 1$ (near zero solution where $|u_i| \leq 1/\sqrt{3}$)
\end{itemize}

\item \textbf{Apply Theorem}:
The numerical radius of the $2 \times 2$ matrix $\begin{bmatrix} 8 & 1 \\ 1 & 0 \end{bmatrix}$ is computed using the formula:
\[
\omega\left(\begin{bmatrix}
a & c \\
d & b
\end{bmatrix}\right) = \frac{|a+b| + \sqrt{(a-b)^2 + (c+d)^2}}{2}
\]
For our matrix $\begin{bmatrix} 8 & 1 \\ 1 & 0 \end{bmatrix}$:
\[
\omega\left(\begin{bmatrix}
8 & 1 \\
1 & 0
\end{bmatrix}\right) = \frac{|8+0| + \sqrt{(8-0)^2 + (1+1)^2}}{2} = \frac{8 + \sqrt{64 + 4}}{2} = \frac{8 + \sqrt{68}}{2} = 4 + \sqrt{17} \approx 8.123
\]
\end{enumerate}

\subsubsection{Stability Interpretation}
The bound $\omega(\mathcal{A}) \leq 8.123$ implies:
\begin{itemize}
    \item The semi-discrete system is stable if the time step satisfies $\Delta t^\alpha < C/\omega(\mathcal{A})$
    \item For $\alpha=0.5$, this gives $\Delta t < (C/8.123)^2$
    \item Provides a practical criterion for adaptive time-stepping
\end{itemize}

\subsubsection{Extensions}
The results can be further refined by:
\begin{itemize}
    \item Using Corollary \ref{Cor3.18} for higher-order bounds
    \item Incorporating spatial refinement effects
    \item Accounting for time-dependent coefficients
\end{itemize}

This application demonstrates how the numerical radius inequalities from the paper provide concrete, computable stability criteria for FPDEs, bridging the gap between abstract operator theory and practical numerical analysis.

\section*{Declarations}
\begin{itemize}
  \item \textbf{Availability of data and materials}: Not applicable.
  \item \textbf{Competing interests}: The authors declare that they have no competing interests.
  \item \textbf{Funding}: Not applicable.
  \item \textbf{Authors  contributions}: The Authors have read and approved this version.
\end{itemize}

\bibliographystyle{abbrv}
\bibliography{references}  






\end{document}